\titleformat{\section}{\large\bfseries\center\raggedright}{\thesection}{0.5em}{{#1}}[]
\titleformat{\subsection}{\bfseries\center\raggedright}{\thesubsection}{0.5em}{{#1}}[]
\titleformat{\subsubsection}[runin]{\bfseries}{\thesubsubsection}{0.5em}{{#1}}[.]
\titlespacing*{\section}{0pt}{0.8\baselineskip}{0.6\baselineskip}
\titlespacing*{\subsection}{0pt}{0.6\baselineskip}{0.4\baselineskip}
\titlespacing*{\subsubsection}{0pt}{0.4\baselineskip}{0.4\baselineskip}
\newcommand\namefont{\normalfont\bfseries}
\newcommand\numberfont{\normalfont\bfseries}
\newcommand\notefont{\normalfont\bfseries}
\newtheoremstyle{mystyle} 
    {0.3em} 
    {0.3em} 
    {\itshape} 
    {} 
    {\normalfont} 
    {.} 
    {.5em} 
    {{\namefont\thmname{#1}}~{\numberfont\thmnumber{#2}}{\notefont\thmnote{ (#3)}}} 
\theoremstyle{plain}
\newtheorem{thm}{Theorem}[section]
\newtheorem{rem}[thm]{Remark}
\newtheorem{lem}[thm]{Lemma}
\newtheorem{defn}[thm]{Definition}
\renewcommand{\leq}{\leqslant}
\renewcommand{\geq}{\geqslant}
\newcommand{\ul}[1]{\underline{\smash{#1}}}
\definecolor{darkred}{rgb}{0.6,0.1,0.1}
\definecolor{darkgreen}{rgb}{0.1,0.6,0.1}
\definecolor{darkblue}{rgb}{0.1,0.1,0.6}
\newcommand{\mt}[1]{\mathrm{#1}}
\newcommand{\overbar}[1]{\mkern 1.7mu\overline{\mkern-3.7mu#1\mkern-1.7mu}\mkern 1.7mu} 
\def\st{\, \left|\right. \,} 
\def\:{\colon} 
\newcommand{\abs}[1]{\left\vert#1\right\vert}
\newcommand{\ap}[1]{\left\langle#1\right\rangle} 
\newcommand{\norm}[1]{\left\Vert#1\right\Vert}
\def\grad{\nabla}
\def\bgrad{\boldsymbol{\nabla}}
\def\bLap{\boldsymbol{\Delta}}
\DeclareMathOperator{\dive}{div}
\DeclareMathOperator{\dist}{dist}
\DeclareMathOperator{\dom}{dom} 
\def\R{\mathbb{R}} 
\def\N{\mathbb{N}} 
\def\d{\,\mathrm{d}}
\def\p{\partial}
\newcommand{\bs}[1]{\boldsymbol{#1}}
\def\bnull{\boldsymbol{0}}
\def\bx{\boldsymbol{x}}
\def\bu{\boldsymbol{u}}
\def\bv{\boldsymbol{v}}
\def\bw{\boldsymbol{w}}
\def\bg{\boldsymbol{g}}
\def\bff{\boldsymbol{f}}
\def\bn{\boldsymbol{n}}
\def\bK{\boldsymbol{K}}
\def\bI{\boldsymbol{I}}
\def\bvarphi{\boldsymbol{\varphi}}
\def\bLambda{\boldsymbol{\Lambda}}
\def\blambda{\boldsymbol{\lambda}}
\def\bL{\boldsymbol{L}}
\def\bV{\boldsymbol{V}}
\def\bB{\boldsymbol{B}}
\def\F{\mathcal{F}} 
\def\E{\mathcal{E}} 
\def\D{\mathcal{D}} 
\begin{document}

\title{Model adaptation in a discrete fracture network:\\ existence of solutions and
numerical strategies}

\author{Alessio Fumagalli$^1$ \and Francesco Saverio Patacchini$^2$}

\date{
$^1$ Department of Mathematics, Politecnico di Milano, p.za Leonardo da Vinci 32, Milano 20133, Italy\\%
$^2$ IFP Energies nouvelles, 1 et 4 avenue de Bois-Pr\'eau, 92852 Rueil-Malmaison, France
}

\maketitle

\begin{abstract}
    Fractures are normally present in the underground and are, for some physical
    processes, of paramount importance. Their accurate description is 
    fundamental to obtain reliable numerical outcomes useful, e.g., for energy
    management. Depending on the physical and geometrical properties of the
    fractures, fluid flow can behave differently, going from a slow Darcian
    regime to more complicated Brinkman or even Forchheimer regimes for high velocity.
    The main problem is to determine where in the fractures one regime is more adequate
    than others. In order to determine these low-speed and high-speed regions, this 
    work proposes an adaptive strategy which is based on selecting the appropriate 
    constitutive law linking velocity and pressure according to a threshold criterion
    on the magnitude of the fluid velocity itself. Both theoretical and numerical 
    aspects are considered and investigated, showing the potentiality of the 
    proposed approach. From the analytical viewpoint, we show existence of
    weak solutions to such model under reasonable hypotheses on the constitutive laws.
    To this end, we use a variational approach identifying solutions with minimizers of
    an underlying energy functional. From the numerical viewpoint, we propose a
    one-dimensional algorithm which tracks the interface between the low- and high-speed
    regions. By running numerical experiments using this algorithm, we illustrate 
    some interesting behaviors of our adaptive model on a single fracture and small
    networks of intersecting fractures.
\end{abstract}

\indent \textit{\textbf{Keywords:} fractured porous media, adaptive constitutive law, variational formulation}

\section{Introduction}\label{sec:intro}

Fractures are discontinuities, often assumed planar, 
along which a rock has been broken due to a pre-existing stress state, and represent 
the main conduits for fluid flow. Because of subsequent mineralization by chemical 
reactions along the fracture walls, a fracture may be partially or completely filled 
by material, substantially altering its physical properties and changing the fluid 
circulation. Additional rock deformation may change even more
the hydraulic properties of the fractures. Even if there is not a clear
separation of scales for fractures, since generally they span all sizes,
we consider here only fractures at a certain scale leaving the smaller ones to be
part of the rock matrix. A fracture's size has an impact on its aperture and 
consequently on its flow response.

Due to their geometrical complexity, fractures are normally represented as
objects of co-dimension 1, meaning that in a $d$-dimensional porous medium they
are approximated as $(d-1)$-surfaces. This approach is normally referred to as
discrete fracture model (DFM), see for example
\cite{Martin2005,DAngelo2011,Schwenck2015,Brenner2016a,Flemisch2016a,Berre2019b,Facciola2019,Berre2020a},
where the physical processes involved are written as a set of partial
differential equations with interface conditions between rock matrix and fractures. The resulting system is
mixed-dimensional partial differential problem as discussed in
\cite{Frih2011,Boon2018,Nordbotten2018,Fumagalli2017a,Boon2020}. For some
specific problems the contribution of the rock matrix is negligible and can be
omitted, obtaining the so-called discrete fracture network (DFN) models. On this topic, the
reader may refer to
\cite{Erhel2009,Berrone2013,Dreuzy2013,Berrone2014,Berrone2016,Fumagalli2016a,Benedetto2017,Scialo2017,Borio2019a}
and the references therein. Especially in the presence of complex constitutive equations
in the fractures, the authors in \cite{Ahmed2018,Ahmed2019} showed that it is
possible to separate the contribution of the porous medium from the fracture
network via a Robin-to-Neumann operator.  They further showed that in a problem
where fractures are modeled with a non-linear problem while the surrounding
porous medium still obeys a linear law, it is possible to avoid the
computationally expensive porous medium in the non-linear iterations considering
only the fracture network.  In the present work, we will consider only fracture networks
while keeping in mind the possibility to include the surrounding porous medium via
this approach.

Depending on several aspects, mainly the micro-structure and hydraulic aperture
of a fracture, the flow in the fracture can be classified into different regimes
corresponding to increasing flow rates and corresponding increasing mathematical
and numerical difficulties. In many applications for low flow rates, caused by
a combination of filling materials in the fracture and a relatively low-pressure
gradient, Darcy flow can be considered in the fracture and a reduced model of
Darcy-type can be derived, see among others
\cite{Martin2005,Angot2009,Frih2011,DAngelo2011,Formaggia2012,Schwenck2015,Flemisch2016a},
in the lower-dimensional setting.  Most of the research so far has been focused on this
flow model; however, its validity is questionable and, to a large extend, it is
insufficient for real problems. For increased flow rates, for example when a
fracture is an open and narrow channel or the packaging of grains is too coarse,
viscous effects become important and a Brinkman or Stokes equation is more
coherent as a reduced model to describe the flow; see
\cite{Morales2017,Rybak2020}. Finally for high velocities, because of inertial
effects, experiments show deviations from the previous models, which indicate
the need for a non-linear correction term. The authors in \cite{Frih2008,Knabner2014} proposed
a reduced model based on a Darcy--Forchheimer flow to capture this phenomenon. This
effect is more evident in large objects like faults spanning several hundreds of
meters. In all the lower-dimensional models, the flow in the surrounding rock
matrix is still modeled by Darcy's law and proper interface conditions are used to
couple matrix and fracture flow.

Depending on their nature, fracture networks may exhibit all flow regimes in
different regions separated by transition interfaces and coupled by suitable
conditions. The interfaces can be located using Forchheimer and Reynolds numbers, 
see \cite{Zeng2006}, which depend on the water velocity. This is the setting for
the present paper, where the positions of the interfaces are
not fixed at the outset of the problem: we obtain a (multi-physics) non-linear
free-boundary problem on the fracture network. We present a mathematical framework 
that is able to adapt the constitutive law in accordance with the flow regime. 
As a simplification, we assume that only two laws, i.e., two speed regimes, can 
be prescribed, leaving the case of multiple laws as a future work. 
A theoretical analysis is developed along with a numerical algorithm
that tracks the interface. Theoretically, we show existence of solutions to our
problem via the minimization of an underlying energy. We do not show uniqueness
since we make the choice to let the constitutive law linking 
velocity and pressure on the interface be a free parameter. We also show that the convex
nature of the problem is strongly intertwined with the "direction" of the jump
between the high-speed and low-speed laws; indeed, if the permeability increases
from low- to high-speed the problem is non-convex, otherwise it is convex. In the former
case we are forced to restrict our existence result to one space dimension ($d=1$).
Numerically, the examples we give with one and more fractures illustrate the
quality and applicability of the proposed algorithm, especially in the non-convex case. 
In the convex case, the algorithm often features oscillations between configurations which
prevents it from converging. For the numerical simulations we used the 
library PorePy \cite{Keilegavlen2020}, a simulation tool written in Python for fractured 
and deformable porous media. PorePy is freely available on GitHub along with the numerical
tests proposed in this work.

The paper is organized as follows. In Section \ref{sec:proposed-model} we decribe 
our model, introduce the equations as well as some notation. In Section \ref{sec:math-setting}
we give the rigorous mathematical setting, along with the assumptions on the
constitutive laws and the weak formulation of the problem. Section \ref{sec:existence}
is dedicated to our results on the existence of solutions and their proofs. Section 
\ref{sec:numerical_approx} introduces the discrete formulation of the
problem and a suitable algorithm to solve it. Numerical examples are reported
in Section \ref{sec:examples} for increasing geometrical and physical complexity.
The work finishes with conclusions in Section \ref{sec:conclusion}.


\section{Proposed model}
\label{sec:proposed-model}

We focus here on a single fracture; see Section \ref{subsec:intersecting-fractures} for a discussion on the model for multiple intersecting fractures. We identify our fracture with an open, bounded connected set $\Omega \subset \R^d$ with Lipschitz boundary $\p\Omega$. We suppose $\Omega$ is filled with a fluid of constant density $\rho \equiv 1$. Mass conservation in $\Omega$ then reads
\begin{equation}
  \label{eq:div-q}
  \dive \bu = q,
\end{equation} 
where $\bu\: \Omega \to \R^d$ is the unknown velocity of the fluid and $q\: \Omega \to \R$ a given source term allowing, for instance, for fluid mass to be exchanged between the fractures and the surrounding porous medium, or rock matrix. Let $\Sigma_{\mt{v}},\Sigma_{\mt{p}} \subset \p\Omega$ be relatively open and such that $\p\Omega = \overbar{\Sigma_{\mt{v}} \cup \Sigma_{\mt{p}}}$ and $\Sigma_{\mt{v}} \cap \Sigma_{\mt{p}} = \emptyset$. To \eqref{eq:div-q} we add the following boundary conditions:
\begin{equation}
  \label{eq:neumann-cond}
  \begin{cases}
    \bu \cdot \bn = u_0 & \text{on $\Sigma_{\mt{v}}$},\\
    p = p_0 & \text{on $\Sigma_{\mt{p}}$},
  \end{cases}
\end{equation}
where $p\:\Omega \to \R$ is the unknown pressure of the fluid and $\bn$ is the outward normal unit vector of $\p\Omega$. Here, $u_0: \Sigma_{\mt{v}} \rightarrow \R$ and $p_0\:\Sigma_{\mt{p}}\to\R$ are given functions setting the conditions on the boundary on $\bu$ and $p$, respectively. We are denoting maps on $\Omega$ and their traces on the boundary $\p\Omega$ by the same notation.  

\subsection{Velocity-pressure constitutive law}
\label{sec:vel-press-law}

Typically, one couples \eqref{eq:div-q} and \eqref{eq:neumann-cond} with a constitutive relation, or law, between the velocity field $\bu$ and the pressure field $p$ via some operator $\bLambda$:
\begin{equation}
  \label{eq:relation-u-p}
  \bLambda(\bu) = -\bgrad p + \bff,
\end{equation}
where $\bff$ is a given body force (e.g., gravity). Examples of laws $\bLambda$ relating velocity and pressure via \eqref{eq:relation-u-p} are
\begin{equation}\label{eq:F-ex}
  \begin{gathered}
    \bLambda_\mt{S}(\bu) = -\bLap \bu, \quad \bLambda_\mt{D}(\bu) = \bK^{-1} \bu,\\
    \quad \bLambda_\mt{B}(\bu) = -\bLap \bu + \bK^{-1} \bu, \quad \bLambda_\mt{DF}(\bu) = \bK^{-1} \bu + \alpha\norm{\bu}^{r-2}\bu,
  \end{gathered} 
\end{equation}
where $\bK \: \Omega \to \R^{d\times d}$ is the permeability tensor and $\alpha\geq 0$ and $r\in[2,\infty)$ some parameters. The notation $\norm{\cdot}$ stands for the Euclidean norm on $\R^d$. Choosing $\bLambda_\mt{S}$ gives Stokes' equation, $\bLambda_\mt{D}$ gives Darcy's equation, $\bLambda_\mt{B}$ gives Brinkman's equation and $\bLambda_\mt{DF}$ gives the generalized Darcy--Forchheimer equation (which simplifies into the classical Darcy--Forchheimer when $r=3$).

To the authors' knowledge, a known issue that has not yet found a documented answer is the case when one needs to choose a combination of laws such as those given as examples in~\eqref{eq:F-ex}, rather than a single one, i.e., when one needs to couple different velocity-pressure laws according to some validity criterion which selects the better-adapted law. As already mentioned in the introduction, this criterion should depend on the speed regime (or Reynolds number) of the flow. For example, where the Reynolds number is low Darcy's law $\bLambda_\mt{D}$ may be preferred, whereas where it is high the Darcy--Forchheimer law $\bLambda_\mt{S}$ might be chosen. For this reason, in this paper we consider the case where we need to choose from two laws $\bLambda_1$ and $\bLambda_2$ according to some threshold speed $\bar u > 0$. We expect that generalizing our results to more than two laws (for example having a low-speed regime, a transitional regime and a high-speed regime) should not to be difficult. In this setting, the law operator in \eqref{eq:relation-u-p} takes the form
\begin{equation}
  \label{eq:law-jump}
  \bLambda(\bu) =
  \begin{cases}
    \bLambda_1(\bu) & \text{whenever $\norm{\bu} < \bar u$},\\
    \bLambda_2(\bu) & \text{whenever $\norm{\bu} > \bar u$}.
  \end{cases}
\end{equation}
Being able to impose a law at the interface $\{\norm{\bu} = \bar u\}$ is out of the scope of this paper---we will therefore consider our problem solved whenever we find a pressure field and a velocity field such that \eqref{eq:div-q}--\eqref{eq:relation-u-p} and \eqref{eq:law-jump} hold. This summarizes as finding functions $\bu$ and $p$ solving
\begin{equation}
  \label{eq:main-model}
  \begin{cases}
    \dive \bu = q & \text{on $\Omega$},\\
    \bLambda(\bu) = -\grad p + \bff & \text{on $\Omega$},\\
    \bu\cdot\bn = u_0  & \text{on $\Sigma_{\mt{v}}$},\\
    p = p_0  & \text{on $\Sigma_{\mt{p}}$},
  \end{cases}
\end{equation}
where $\bLambda$ is any law such as in \eqref{eq:law-jump}. Clearly, when $\bLambda_1\neq\bLambda_2$, this choice of adaptable law introduces a discontinuity at $\norm{\bu} = \bar u$ which we shall treat carefully when studying existence of solutions. We will complete in Section \ref{sec:assu-vel-press} the strong formulation \eqref{eq:main-model}, which is somewhat formal since it lacks an interface constitutive relation; indeed, we will introduce a multi-valued weak setting so as to be able to treat the interface without imposing any given law on it. We refer the reader to \cite{BMZ15} for a multi-valued monotone operator approach for pressure-dependent permeabilities; note that the operator therein is continuous in velocity.

\begin{rem}
  Whenever the boundary piece $\Sigma_{\mt{p}}$ is such that $\mt{Vol}^{d-1}(\Sigma_{\mt{p}}) = 0$, where $\mt{Vol}^{d-1}$ stands for the $(d-1)$-dimensional Lebesgue measure, it is classical to add to \eqref{eq:main-model} a constraint on the average of the pressure field:
\begin{equation}
  \label{eq:pressure-average}
  \frac{1}{\abs{\Omega}}\int_\Omega p = \varpi,
\end{equation}
for some $\varpi\in\R$. Indeed, this is often required to ensure uniqueness of the pressure field satisfying Problem \eqref{eq:main-model} when $\bLambda$ is a classical continuous law. Similarly we will impose \eqref{eq:pressure-average} whenever $\mt{Vol}^{d-1}(\Sigma_{\mt{p}}) = 0$, so that our problem in this case becomes: find functions $\bu$ and $p$ such that 
\begin{equation*}
  \begin{cases}
    \dive \bu = q & \text{on $\Omega$},\\
    \bLambda(\bu) = -\grad p + \bff & \text{on $\Omega$},\\
    \bu\cdot\bn = u_0  & \text{on $\p\Omega$},
  \end{cases}
\end{equation*}
under the constraint \eqref{eq:pressure-average}, where $\bLambda$ is now given in \eqref{eq:law-jump}. For ease of discussion, however, we will often silence \eqref{eq:pressure-average} and only refer to \eqref{eq:main-model} as being our problem, even when $\mt{Vol}^{d-1}(\Sigma_{\mt{p}}) = 0$; this average condition will nevertheless be naturally encoded in our weak formulation.
\end{rem}

We focus in this paper on Darcy-like operator laws, that is, laws involving no derivatives of the velocity field, so that we need to exclude Stokes' and Brinkman's equations as admissible examples. Under some additional conditions (depending in particular on the "direction" of the jump between laws $\bLambda_1$ and $\bLambda_2$ at the interface), we show existence of solutions via the study of an energetic formulation of \eqref{eq:main-model}. Indeed, we are able to define an energy functional on the space of velocity fields whose minimizers satisfy \eqref{eq:main-model}. Although we have uniqueness for this energetic formulation in some conditions (see Section \ref{sec:existence}), this property does not transfer to \eqref{eq:main-model}---this is natural since we are not imposing what the velocity-pressure law should be on the interface. In order to hope for uniqueness, one should either impose an adequate law on the interface or show that the interface must have zero Lebesgue measure so that it does not play a role in defining weak solutions. As already mentioned, we do not wish to focus on the problem of the interface at this stage and leave it to a later work.

\subsection{Intersecting fractures}
\label{subsec:intersecting-fractures}

In the case $\Omega$ is composed of multiple intersecting fracture branches, forming thus a
fracture network, we can extend the previous model by including suitable conditions at the intersections.
Given $\Omega$ we introduce $\omega^i \subset \Omega$ to be a fracture branch, with $n_\omega \ni i$ the 
total number of branches. Clearly, given two distinct branches $\omega^i$ and $\omega^j$,
with $i\neq j$, we have $\mathring{\omega}^i \cap \mathring{\omega}^j =
\emptyset$ and also that $\overline{\Omega} = \cup_{i =1}^{n_\omega}
\overline{\omega}^i$.

We consider $2 \leq n \leq n_\omega$ fracture branches that meet at an
intersection $\mathcal{I}$ whose closure $\overline{\mathcal{I}} = \cap_{i=1}^n \overline{\omega}^i$. To
complete model \eqref{eq:main-model} we impose the following conditions on $\mathcal{I}$:
\begin{gather}\label{eq:network_cc}
    \sum_{i=1}^n \bu^i \cdot \bn^i = 0
    \quad \text{and} \quad
    p^i = p^j, \quad \forall\, i,j=1, \ldots, n,
\end{gather}
where with a superscript $i$ we indicate the corresponding object restricted to
$\omega^i$, and $\bn^i$ is a unit vector tangent to $\omega^i$ and pointing to
$\mathcal{I}$, in the mono-dimensional case, and in addition normal to $\partial
\omega^i$, in the multi-dimensional case. This condition is frequently used, see
for instance \cite{Alboin2000a,Amir2005,Berrone2013a,Berrone2013}. The first condition
in \eqref{eq:network_cc} is a direct consequence of the conservation of mass at the
intersection, while the second can be derived from each constitutive relation of
the form \eqref{eq:relation-u-p}. These conditions do not put any additional
difficulties in the analysis and are therefore considered only in the numerical
examples.

It is possible to consider more complex conditions that allow pressure and
velocity jump at the intersection, see among others
\cite{Formaggia2012,Boon2018,Berre2020a}. However, since this is
not our main focus we keep the simpler interface condition \eqref{eq:network_cc}.

\section{Mathematical setting}
\label{sec:math-setting}

We give in this section the rigorous mathematical setting; in particular we introduce the assumptions on the underlying law operators as well as the weak formulation of our problem. We shall use the convention to use boldfaced symbols for vectors and vector-valued functions. 

From now on, without loss of generality we take $\bar u$ to be equal to $1$. For a given field $\bu\:\Omega\to \R^d$, we write 
\begin{gather}\label{eq:setsubdiv}
  \begin{gathered}
    \Omega_1(\bu) = \{ \bx\in \Omega \st \norm{\bu(\bx)} < 1 \}, \quad \Omega_2(\bu) = \{ \bx\in \Omega \st \norm{\bu(\bx)} > 1\},\\ \Gamma(\bu) = \Omega\setminus (\Omega_1(\bu) \cup \Omega_2(\bu)) = \{\bx\in \Omega \st \norm{\bu(\bx)} = 1 \},
  \end{gathered}
\end{gather}
where $\Omega_1(\bu)$, $\Omega_2(\bu)$ and $\Gamma(\bu)$ are what we have already respectively referred to as the \emph{low-speed region}, \emph{high-speed region} and \emph{interface} (associated with $\bu$). Obviously the family $\mathcal{C} := \{\Omega_1(\bu),\Omega_2(\bu),\Gamma(\bu)\}$ forms a partition of $\Omega$, and we will refer to $\mathcal{C}$ as the \emph{configuration} of the problem, especially for the numerics in Sections \ref{sec:numerical_approx} and \ref{sec:examples}. We can rewrite these sets in a more compact form:
\begin{equation*}
  \Omega_1(\bu) = \bu^{-1}(B_1(\bnull)), \quad \Omega_2(\bu) = \bu^{-1}(E_1(\bnull)), \quad \Gamma(\bu) = \bu^{-1}(S_1(\bnull)),
\end{equation*}
where $B_1(\bnull)$ and $S_1(\bnull)$ stand respectively for the unit open ball and unit sphere in $\R^d$ centered at the origin $\bnull$ and $E_1(\bnull) = \R^d\setminus (B_1(\bnull)\cup S_1(\bnull))$. Note that if $\bu$ is not continuous, then $\Omega_1(\bu)$ and $\Omega_2(\bu)$ may not be open.

For all $n\in[1,\infty)$, $m\in(0,\infty)$ and $A\subset \R^d$ measurable we will denote by $L^n(A)$ and $W^{m,n}(A)$ the Lebesgue space of measurable functions on $A$ with integrable $n$th power and the $m$th-order Sobolev space associated to $L^n(A)$; we will also write $\bL^n(A)$ in place of $(L^n(A))^d$. As usual, in these spaces, equality between two functions is always intended in the almost-everywhere sense.

\subsection{Assumptions on the velocity-pressure laws}
\label{sec:assu-vel-press}

In the following, the operator laws $\bLambda_1$ and $\bLambda_2$ are assumed to be of the form
\begin{equation}
  \label{eq:law-form}
  \bLambda_1(\bu) = \phi_1(\norm{\bu}^2)\bu \,\chi_{\Omega_1(\bu)} \quad \text{and} \quad \bLambda_2(\bu) = \phi_2(\norm{\bu}^2) \bu \,\chi_{\Omega_2(\bu)},
\end{equation}
where $\chi_A$ is the characteristic function of any set $A\subset\R^d$. The functions $\phi_1,\phi_2\: [0,\infty) \to [0,\infty)$ are continuous and increasing on $[0,1]$ and $[1,\infty)$, respectively. Furthermore, $\phi_2$ satisfies the following assumption: there exist $r\geq 2$ and $c,C>0$ such that
\begin{equation}
  \label{eq:bound-phi2}
   c a^{\frac{r-2}{2}} \leq \phi_2(a) \leq C \left(1+a^{\frac{r-2}{2}}\right) \quad \text{for all $a\geq 1$}.
\end{equation}
A recurrent notation we will use is
\begin{equation}
  \label{eq:lambdas}
  \lambda_1 := \phi_1(1) \quad \text{and} \quad \lambda_2 := \phi_2(1),
\end{equation}
and will call the difference $\lambda_2-\lambda_1$ the \emph{interface inverse permeability jump}. Symmetrically, whenever $k_1:=1/\lambda_1$ and $k_2:=1/\lambda_2$ are considered (cf. in particular Sections \ref{sec:numerical_approx} and \ref{sec:examples}), the difference $k_2 - k_1$ will be called the \emph{interface permeability jump}. We will see that the sign of this jump is an important threshold which determines the convexity of the energy functional underlying the problem. Note that because $\phi_2$ is increasing on $[1,\infty)$, one must have $c\leq\lambda_2$ and $C\geq \lambda_2/2$ in \eqref{eq:bound-phi2}. 

Interesting examples that fall into the above requests, in particular satisfying \eqref{eq:law-form} with \eqref{eq:bound-phi2}, are combinations of scalar versions of the Darcy and Darcy--Forchheimer laws $\bLambda_{\mt{D}}$ and $\bLambda_{\mt{DF}}$ (cf. \eqref{eq:F-ex}), as desired in the first place. Indeed, one is allowed to consider 
\begin{equation*}
    \bLambda_1(\bu) = \lambda_1 \bu \,\chi_{\Omega_1(\bu)} \quad \text{and} \quad \bLambda_2(\bu) = \lambda_2 \bu\,\chi_{\Omega_2(\bu)},
\end{equation*}
that is, $\phi_1 \equiv \lambda_1$ and $\phi_2 \equiv \lambda_2$, or
\begin{equation*}
    \bLambda_1(\bu) = \lambda_1 \bu \,\chi_{\Omega_1(\bu)} \quad \text{and} \quad \bLambda_2(\bu) = (\lambda_{21} + \lambda_{22} \norm{\bu}) \bu\,\chi_{\Omega_2(\bu)}.
\end{equation*}
that is, $\phi_1 \equiv \lambda_1$ and $\phi_2(a) = \lambda_{21} + \lambda_{22}\sqrt{a}$, where $\lambda_1$, $\lambda_2$, $\lambda_{21}$ and $\lambda_{22}$ are positive scalars. In the former case we would require $r=2$, whereas in the latter $r=3$.

\begin{rem}
\label{rem:tensor-perm}
  This setting where $\bLambda_1$ and $\bLambda_2$ are as in \eqref{eq:law-form} physically restricts us to scalar permeabilities. More general laws including tensor permeabilities, as motivated in \cite{SV01}, are for instance given by the following:
  \begin{equation*}
    \bLambda_1(\bu) = \phi_1(\blambda_1\bu\cdot\bu)\blambda_1\bu\, \chi_{\Omega_1(\bu)}\,
  \end{equation*}
and analogously for $\bLambda_2$, where $\blambda_1\in\R^{d\times d}$ is a symmetric positive definite matrix encoding a tensor permeability. Even more general forms are envisageable:
\begin{equation*}
    \bLambda_1(\bu) = \sum_{j=1}^n \phi_{1,j}(\blambda_{1,j}\bu\cdot\bu)\blambda_{1,j}\bu\, \chi_{\Omega_1(\bu)},
  \end{equation*}
where the $\phi_{1,j}$ and $\blambda_{1,j}$ are $n$ different law functions and permeability tensors and the dot $\cdot$ stands for the Euclidean inner product in $\R^d$. We leave these general laws to a future work. We claim that the techniques we use in the present paper for scalar laws should extend to tensor laws without too much difficulty.
\end{rem}

We denote by $s = \tfrac{r}{r-1}$ the conjugate exponent of $r$. Thanks to the continuity of $\phi_1$ and the right-hand inequality in \eqref{eq:bound-phi2}, we observe that the operators $\bLambda_1$ and $\bLambda_2$ map $\bL^r(\Omega)$ into $\bL^s(\Omega)$. Because we allow for any law on the interface, this invites us to consider the multi-valued setting where our combined law $\bLambda$ maps $\bL^r(\Omega)$ into the power set $2^{\bL^s(\Omega)}$ and is given by
\begin{equation*}
  \bLambda(\bu) =
  \begin{cases}
    \{\bLambda_1(\bu)\} & \text{on $\Omega_1(\bu)$},\\
    \bL^s(\Omega) & \text{on $\Gamma(\bu)$},\\
    \{\bLambda_2(\bu)\} & \text{on $\Omega_2(\bu)$},
  \end{cases}
\end{equation*}
which can equivalently be written as
\begin{equation}
    \label{eq:law-combined}
    \bLambda(\bu) = \{\bLambda_1(\bu) + \bLambda_2(\bu) + \bs{h}\chi_{\Gamma(\bu)} \}_{\bs{h}\in\bL^s(\Omega)}.
\end{equation}

\begin{rem}
  The existence results that we present in this paper still apply if we consider some ``background'' law with tensor permeability. Indeed, our proofs remain essentially untouched if the combined law $\bLambda$ in \eqref{eq:law-combined} is replaced by
  \begin{equation*}
    \bLambda(\bu) = \{\beta(\blambda\bu\cdot\bu)\blambda \bu + \bLambda_1(\bu) + \bLambda_2(\bu) + \bs{h}\chi_{\Gamma(\bu)} \}_{\bs{h}\in\bL^s(\Omega)},
  \end{equation*}
where $\blambda \in \R^{d\times d}$ is symmetric positive definite and $\beta\:[0,\infty) \to [0,\infty)$ is a continuous, increasing function such that $\beta + \phi_2$ satisfies \eqref{eq:bound-phi2} in place of $\phi_2$.
\end{rem}


\subsection{Weak formulation}
\label{sec:weak-formulation}

We fix $q\in L^r(\Omega)$, $u_0\in L^{r}(\Sigma_{\mt{v}})$, $p_0\in W^{\frac{1}{r},s}(\Sigma_{\mt{p}})$ and $\bff\in \bL^s(\Omega)$. Consider the Sobolev space
\begin{equation*}
  \widetilde W^{1,s}(\Omega) = 
  \begin{cases}
    \left\{ \xi \in W^{1,s}(\Omega) \st \displaystyle \frac{1}{\abs{\Omega}}\int_\Omega \xi = \varpi \right\} & \text{if $\mt{Vol}^{d-1}(\Sigma_{\mt{p}}) = 0$},\\
    \left\{ \xi \in W^{1,s}(\Omega) \st \xi = p_0\; \text{on $\Sigma_{\mt{p}}$} \right\} & \text{if $\mt{Vol}^{d-1}(\Sigma_{\mt{p}}) > 0$},
  \end{cases}
\end{equation*}
where we recall that $\mt{Vol}^{d-1}$ is the $(d-1)$-dimensional Lebesgue measure. The problem we shall focus on in the rest of the paper is: find $(\bu,p) \in \bL^r(\Omega)\times \widetilde W^{1,s}(\Omega)$ so that there exists $\bLambda_{\bu} \in \bLambda(\bu)$ such that for all $\bvarphi\in \bL^r(\Omega)$ and $\psi\in \widetilde W^{1,s}(\Omega)$ there holds
\begin{equation}
  \label{eq:weak-form-tilde}
  \begin{gathered}
    \int_\Omega \bLambda_{\bu} \cdot \bvarphi = - \int_\Omega \bgrad p \cdot\bvarphi + \int_\Omega \bff\cdot\bvarphi,\\
    \int_\Omega \bgrad \psi\cdot \bu = -\int_\Omega q\psi + \int_{\Sigma_{\mt{v}}} u_0\psi,
  \end{gathered}
\end{equation}
where we recall that $\bLambda$ is given in \eqref{eq:law-combined}. From the linearity of \eqref{eq:weak-form-tilde} with respect to the pressure field, we see that by setting $\bff_0 = \bff + \bgrad (E p_0)$, with $E\: W^{\frac{1}{r},s}(\Sigma_{\mt{p}}) \rightarrow W^{1,s}(\Omega)$ any extension operator being right-inverse of the $W^{1,s}(\Omega)$-trace operator, and defining the Sobolev space
\begin{equation*}
  W_0^{1,s}(\Omega) = 
  \begin{cases}
    \left\{ \xi \in W^{1,s}(\Omega) \st \displaystyle \int_\Omega \xi = 0 \right\} & \text{if $\mt{Vol}^{d-1}(\Sigma_{\mt{p}}) = 0$},\\
    \left\{ \xi \in W^{1,s}(\Omega) \st \xi = 0\; \text{on $\Sigma_{\mt{p}}$} \right\} & \text{if $\mt{Vol}^{d-1}(\Sigma_{\mt{p}}) > 0$},
  \end{cases}
\end{equation*}
the formulation in \eqref{eq:weak-form-tilde} is equivalent to: find $(\bu,p) \in \bL^r(\Omega)\times W_0^{1,s}(\Omega)$ so that there exists $\bLambda_{\bu} \in \bLambda(\bu)$ such that for all $\bvarphi\in \bL^r(\Omega)$ and $\psi\in W_0^{1,s}(\Omega)$ there holds
\begin{equation}
  \label{eq:weak-form}
  \begin{gathered}
    \int_\Omega \bLambda_{\bu} \cdot \bvarphi = - \int_\Omega \bgrad p \cdot\bvarphi + \int_\Omega \bff_0\cdot\bvarphi,\\
    \int_\Omega \bgrad \psi\cdot \bu = -\int_\Omega q\psi + \int_{\Sigma_{\mt{v}}} u_0\psi.
  \end{gathered}
\end{equation}
We endow $W_0^{1,s}(\Omega)$ with the norm $\norm{\psi}_{W_0^{1,s}(\Omega)} = \norm{\bgrad\psi}_{\bL^s(\Omega)}$ for all $\psi\in W_0^{1,s}(\Omega)$. We emphasize here that the well-posedness of \eqref{eq:weak-form} is not affected by the choice of the extension $E$ in the definition of $\bff_0$.

Thanks to Riesz's representation theorem, we shall equivalently manipulate functions in $\bL^s(\Omega)$ as elements of $(\bL^r(\Omega))^*$, the dual of $\bL^r(\Omega)$. In particular, this means that, given $\bu\in \bL^r(\Omega)$, operators $\bLambda_{\bu}$ in $\bLambda(\bu)$ will also be seen as maps from $\bL^r(\Omega)$ to $(\bL^r(\Omega))^* \sim \bL^s(\Omega)$. Writing $\ap{\cdot,\cdot}$ the dual mapping on $\bL^s(\Omega)\times \bL^r(\Omega)$, the weak formulation in \eqref{eq:weak-form} of our problem can be restated as follows: find $(\bu,p)\in \bL^r(\Omega)\times W_0^{1,s}(\Omega)$ so that there exists $\bLambda_{\bu} \in \bLambda(\bu)$ such that for all $\bvarphi\in \bL^r(\Omega)$ and $\psi\in W_0^{1,s}(\Omega)$ there holds
\begin{equation}
  \label{eq:weak-form-main}
  \begin{gathered}
    \ap{\bLambda_{\bu},\bvarphi} = -\ap{\bgrad p,\bvarphi} + \ap{\bff_0,\bvarphi},\\
    \ap{\bgrad\psi,\bu} = - \int_\Omega q\psi + \int_{\Sigma_{\mt{v}}} u_0\psi .
  \end{gathered}
\end{equation}
Our goal is now to show existence for this weak formulation. 

\begin{rem}
\label{rem:uniqueness-interface}
    We repeat here that, given the form of the law $\bLambda$ in \eqref{eq:law-form}, we cannot get uniqueness of solutions---in fact of velocity fields---satisfying \eqref{eq:weak-form-main} since it is embedded in the very formulation of the problem that any interface velocity-pressure law is admissible. Uniqueness could be obtained if an appropriate interface law were imposed or if the interface were shown to be less than $d$-dimensional and therefore had no contribution in the definition of weak solutions.
\end{rem}

\section{Existence}
\label{sec:existence}

We state and prove here our results on existence for Problem \eqref{eq:weak-form-main}. As already mentioned, we will see that our results depend on the sign of the interface inverse permeability jump, $\lambda_2 - \lambda_1$. The strategy is the following: 
\begin{enumerate}
  \item we reduce Problem \eqref{eq:weak-form} on the velocity and pressure fields into an equivalent problem on the velocity field only (cf. \eqref{eq:weak-form-V0} and \cite{AFM18,SLM13});
  \item we derive an energetic formulation whose minimizers are solutions to this reduced problem on the velocity field (cf. \eqref{eq:minimization} and \cite{SV01});
  \item we study the existence of minimizers for this energetic formulation distinguishing the convex case $\lambda_1\leq\lambda_2$ from the non-convex case $\lambda_1 > \lambda_2$ (cf. Theorems \ref{thm:wp-1-less-than-2} and \ref{thm:wp-2-less-than-1}). We are only able to treat the one-dimensional setting $d=1$ when $\lambda_1 > \lambda_2$. 
\end{enumerate}

\subsection{Reduction to a problem on the velocity field}
\label{sec:restriction-V0}

Define the maps $B \: \bL^r(\Omega) \to (W_0^{1,s}(\Omega))^*$ and $\bB^* \: W_0^{1,s}(\Omega) \to (\bL^r(\Omega))^*$  by
\begin{equation}
  \label{eq:B}
  B(\bvarphi)(\psi) = \bB^*(\psi)(\bvarphi) = \ap{\bgrad \psi, \bvarphi} \quad \text{for all $\bvarphi \in \bL^r(\Omega)$ and $\psi\in W_0^{1,s}(\Omega)$},
\end{equation}
Let us introduce the set
\begin{equation*}
  V = \{ \bvarphi \in \bL^r(\Omega) \st \forall\, \psi\in W_0^{1,s}(\Omega),\, \ap{\bgrad \psi,\bvarphi} = 0 \},
\end{equation*}
which satisfies $V = \mathrm{Ker}(B)$. We write $V^\perp\subset \bL^s(\Omega)$ the polar subspace of $V$, that is, $V^\perp = \{ \bg \in \bL^s(\Omega) \st \forall\, \bvarphi\in V,\, \ap{\bg,\bvarphi} = 0\}$, and $\bL^r(\Omega)/V \subset \bL^r(\Omega)$ the quotient space of $\bL^r(\Omega)$ by $V$.

The following three lemmas are inspired from their equivalents in \cite{AFM18}.

\begin{lem}
  \label{lem:isomorphisms}
  The maps $B$ and $B^*$ in \eqref{eq:B} are isomorphisms from $\bL^r(\Omega)/V$ to $(W_0^{1,s}(\Omega))^*$ and from $W_0^{1,s}(\Omega)$ to $V^\perp$, respectively.
\end{lem}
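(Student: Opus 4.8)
The plan is to prove the statement by a standard inf–sup (Banach–Nečas–Babuška) argument adapted to the reflexive-Banach-space setting, treating $B$ and $B^*$ as mutually dual operators. First I would observe that $B$ is a bounded linear operator from $\bL^r(\Omega)$ to $(W_0^{1,s}(\Omega))^*$: boundedness is immediate from Hölder's inequality, since $\abs{\ap{\bgrad\psi,\bvarphi}} \leq \norm{\bgrad\psi}_{\bL^s(\Omega)}\norm{\bvarphi}_{\bL^r(\Omega)} = \norm{\psi}_{W_0^{1,s}(\Omega)}\norm{\bvarphi}_{\bL^r(\Omega)}$. Symmetrically $B^*$ is bounded from $W_0^{1,s}(\Omega)$ to $(\bL^r(\Omega))^*$, and the two are genuinely adjoint to one another by the symmetric definition in \eqref{eq:B}. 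Since $\bL^r(\Omega)$ is reflexive, the closed-range theorem and its quantitative inf–sup form apply in both directions.

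The heart of the matter is the inf–sup (or coercivity-on-the-quotient) estimate: there exists $\beta>0$ such that
\begin{equation*}
  \sup_{\bvarphi\in \bL^r(\Omega)\setminus\{\bnull\}} \frac{\ap{\bgrad\psi,\bvarphi}}{\norm{\bvarphi}_{\bL^r(\Omega)}} \geq \beta\,\norm{\psi}_{W_0^{1,s}(\Omega)} \quad \text{for all $\psi\in W_0^{1,s}(\Omega)$}.
\end{equation*}
This is where the real work lies, and I expect it to be the main obstacle. The natural route is to exhibit, for each $\psi$, a competitor $\bvarphi$ realizing a fixed fraction of the norm: a canonical choice is $\bvarphi = \norm{\bgrad\psi}_{\bL^s(\Omega)}^{2-s}\,\abs{\bgrad\psi}^{s-2}\bgrad\psi$ (with the usual convention at points where $\bgrad\psi$ vanishes), for which $\ap{\bgrad\psi,\bvarphi} = \norm{\bgrad\psi}_{\bL^s(\Omega)}^{2} = \norm{\psi}_{W_0^{1,s}(\Omega)}^2$ while $\norm{\bvarphi}_{\bL^r(\Omega)} = \norm{\bgrad\psi}_{\bL^s(\Omega)} = \norm{\psi}_{W_0^{1,s}(\Omega)}$, giving $\beta = 1$ directly. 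One must check this $\bvarphi$ indeed lies in $\bL^r(\Omega)$, which follows since $(s-1)r = s \cdot \tfrac{r}{r} \cdot \tfrac{1}{\,}$—more precisely $(s-1)r = s$ because $s = r/(r-1)$—so $\abs{\bgrad\psi}^{s-1}\in L^r(\Omega)$. Here it is implicit that the $W_0^{1,s}$-seminorm is a genuine norm, i.e., a Poincaré-type inequality holds on $W_0^{1,s}(\Omega)$ in both cases ($\Sigma_{\mt p}$ of positive or zero $(d-1)$-measure), which is classical for the Lipschitz domain $\Omega$.

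From the inf–sup estimate I would conclude as follows. The inequality above says $B^*$ is bounded below, hence injective with closed range; since $\bL^r(\Omega)$ is reflexive, closed range of $B^*$ is equivalent to closed range of $B$, and the range of $B$ is then all of $(W_0^{1,s}(\Omega))^*$ because its annihilator is $\mathrm{Ker}(B^*) = \{0\}$. Thus $B$ is surjective; passing to the quotient by $V = \mathrm{Ker}(B)$, the induced map $\bar B\colon \bL^r(\Omega)/V \to (W_0^{1,s}(\Omega))^*$ is a continuous bijection, hence an isomorphism by the open mapping theorem (or directly, since the quotient-norm estimate $\norm{\bvarphi + V}_{\bL^r(\Omega)/V} \leq C\norm{B\bvarphi}$ follows from the inf–sup bound by duality). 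For the second claim, boundedness below of $B^*$ gives that $B^*$ is an isomorphism onto its range; that range is exactly $V^\perp$, since on one hand $\ap{B^*\psi,\bvarphi} = \ap{\bgrad\psi,\bvarphi}=0$ for all $\bvarphi\in V$ shows $\mathrm{Ran}(B^*)\subseteq V^\perp$, and on the other hand $\mathrm{Ran}(B^*)$ is closed and $V^\perp = (\mathrm{Ker}\,B)^\perp = \overline{\mathrm{Ran}(B^*)}$ by the bipolar theorem in reflexive spaces. This completes both isomorphism statements.
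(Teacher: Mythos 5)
Your proposal is correct and follows essentially the same route as the paper: both reduce the lemma to the inf--sup estimate with constant $1$, which the paper obtains by identifying $\norm{\bgrad\psi}_{\bL^s(\Omega)}$ with the dual norm over $\bL^r(\Omega)$ and you obtain by exhibiting the explicit duality-map competitor $\abs{\bgrad\psi}^{s-2}\bgrad\psi$. The only difference is that the paper delegates the passage from the inf--sup condition to the two isomorphism statements to a cited lemma of [AFM18], whereas you re-derive it via the closed range theorem; that part of your argument is standard and sound (modulo the harmless typo in your check that $(s-1)r=s$).
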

\begin{proof}
  By \cite[Lemma 2.1]{AFM18}, it suffices to show that there exists $\gamma > 0$ such that
    \begin{equation*}
    \inf_{\substack{\psi \in W_0^{1,s}(\Omega)\\ \psi\neq 0}} \sup_{\substack{\bvarphi\in \bL^r(\Omega)\\ \bvarphi\neq \bnull}} \frac{\ap{\bgrad\psi,\bvarphi}}{\norm{\psi}_{W_0^{1,s}(\Omega)}\norm{\bvarphi}_{\bL^r(\Omega)}} \geq \gamma.
  \end{equation*}
  To this end, let $\psi\in W_0^{1,s}(\Omega)$ with $\psi \neq 0$. Then, by identifying $\bL^s(\Omega)$ with $(\bL^r(\Omega))^*$ we get
  \begin{equation*}
    \norm{\psi}_{W_0^{1,s}(\Omega)} = \norm{\bgrad\psi}_{\bL^s(\Omega)} = \sup_{\substack{\bvarphi\in \bL^r(\Omega)\\\bvarphi\neq\bnull}} \frac{\ap{\bgrad\psi,\bvarphi}}{\norm{\bvarphi}_{\bL^r(\Omega)}},
  \end{equation*}
  so that
  \begin{equation*}
    1 = \frac{\norm{\psi}_{W_0^{1,s}(\Omega)}}{\norm{\psi}_{W_0^{1,s}(\Omega)}} = \sup_{\substack{\bvarphi\in \bL^r(\Omega)\\\bvarphi\neq\bnull}} \frac{\ap{\bgrad\psi,\bvarphi}}{\norm{\psi}_{W_0^{1,s}(\Omega)}\norm{\bvarphi}_{\bL^r(\Omega)}}.
  \end{equation*}
  Taking above the infimum over all $\psi \in W_0^{1,s}(\Omega)$ with $\psi\neq 0$ ends the proof.
\end{proof}

Now, via the following two lemmas, we simplify our weak formulation \eqref{eq:weak-form-main} into a problem restricted to $V$ (cf. \eqref{eq:weak-form-V0} below). 
\begin{lem}
  \label{lem:bdry-V0}
  There exists a unique $\hat\bu\in \bL^r(\Omega)/V$ such that
  \begin{equation*}
    \ap{\bgrad \psi,\hat\bu} = -\int_\Omega q\psi + \int_{\Sigma_{\mt{v}}} u_0\psi \quad \text{for all $\psi\in W_0^{1,s}(\Omega)$}.
  \end{equation*}
\end{lem}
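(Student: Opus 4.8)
The plan is to exploit the inf--sup/isomorphism structure already established in Lemma~\ref{lem:isomorphisms}. The right-hand side $\psi\mapsto -\int_\Omega q\psi + \int_{\Sigma_{\mt{v}}} u_0\psi$ is a bounded linear functional on $W_0^{1,s}(\Omega)$: boundedness of $\psi\mapsto\int_\Omega q\psi$ follows from $q\in L^r(\Omega)$ together with H\"older's inequality and the continuous embedding $W_0^{1,s}(\Omega)\hookrightarrow L^s(\Omega)$ (via the Poincar\'e-type inequality implicit in the chosen norm $\norm{\psi}_{W_0^{1,s}(\Omega)}=\norm{\bgrad\psi}_{\bL^s(\Omega)}$), while boundedness of $\psi\mapsto\int_{\Sigma_{\mt{v}}}u_0\psi$ follows from $u_0\in L^r(\Sigma_{\mt{v}})$ and the continuity of the trace operator $W^{1,s}(\Omega)\to L^s(\p\Omega)$. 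Hence this functional defines an element $L\in (W_0^{1,s}(\Omega))^*$.

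Next I would invoke Lemma~\ref{lem:isomorphisms}: the map $B\:\bL^r(\Omega)/V \to (W_0^{1,s}(\Omega))^*$ is an isomorphism. Therefore there is a unique $\hat\bu\in\bL^r(\Omega)/V$ with $B(\hat\bu)=L$, which unwinds via the definition~\eqref{eq:B} of $B$ to precisely $\ap{\bgrad\psi,\hat\bu} = -\int_\Omega q\psi + \int_{\Sigma_{\mt{v}}}u_0\psi$ for all $\psi\in W_0^{1,s}(\Omega)$. Uniqueness in the quotient is immediate from injectivity of $B$.

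The only genuine point requiring care is verifying that $L$ is well defined on $W_0^{1,s}(\Omega)$ in both cases of the definition of $W_0^{1,s}(\Omega)$ (zero trace on $\Sigma_{\mt{p}}$, or zero average), and in particular that the Poincar\'e inequality ensuring $\norm{\psi}_{W_0^{1,s}(\Omega)}=\norm{\bgrad\psi}_{\bL^s(\Omega)}$ is indeed a norm controlling $\norm{\psi}_{L^s(\Omega)}$ holds in each case; this is classical (Poincar\'e--Friedrichs when a boundary piece of positive measure carries a zero trace, Poincar\'e--Wirtinger for the zero-average case, using connectedness of $\Omega$). I do not expect any real obstacle here — the statement is essentially a corollary of Lemma~\ref{lem:isomorphisms} once the right-hand side is recognized as a bounded functional. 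I would write it in two or three lines accordingly.
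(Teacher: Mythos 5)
Your proposal is correct and follows essentially the same route as the paper: recognize the right-hand side as an element of $(W_0^{1,s}(\Omega))^*$ and apply the isomorphism property of $B$ from Lemma \ref{lem:isomorphisms}. The paper's proof is just a terser version of yours, taking the boundedness of the functional (your H\"older/trace/Poincar\'e checks) for granted.
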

\begin{proof}
  For all $\psi\in W_0^{1,s}(\Omega)$, let
  \begin{equation*}
    F(\psi) = -\int_\Omega q\psi + \int_{\Sigma_{\mt{v}}} u_0\psi. 
  \end{equation*}
  Since we are assuming that $q\in L^r(\Omega)$ and $u_0\in L^r(\Sigma_{\mt{v}})$, the map $F$ is a well-defined linear map from $W_0^{1,s}(\Omega)$ into $\R$, i.e., $F\in (W_0^{1,s}(\Omega))^*$. By Lemma \ref{lem:isomorphisms}, we thus know there exists a unique $\hat\bu \in \bL^r(\Omega)/V$ such that
  \begin{equation*}
    \ap{\bgrad\psi,\hat\bu} = B(\hat\bu)(\psi) = F(\psi) = -\int_\Omega q\psi + \int_{\Sigma_{\mt{v}}} u_0\psi \quad \text{for all $\psi\in W_0^{1,s}(\Omega)$}, 
  \end{equation*}
  which is the desired result.
\end{proof}
\begin{lem}
  The weak formulation \eqref{eq:weak-form-main} is equivalent to the following problem: find $\bv\in V$ such that there exists $\bLambda_{\bv} \in \bLambda(\bv+\hat\bu)$ satisfying
  \begin{equation}
    \label{eq:weak-form-V0}
    \ap{\bLambda_{\bv},\bvarphi} = \ap{\bff_0,\bvarphi} \quad \text{for all $\bvarphi \in V$},
  \end{equation}
  where $\hat\bu$ is given by Lemma \ref{lem:bdry-V0}.
\end{lem}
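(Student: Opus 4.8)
The statement is a standard decoupling of the pressure from a saddle--point system: $V=\mathrm{Ker}(B)$ absorbs the divergence constraint, the second equation of \eqref{eq:weak-form-main} pins down the velocity modulo $V$, and the pressure is then recovered from the part of the first equation living in $V^\perp$, using that $\bB^*$ is an isomorphism onto $V^\perp$ (Lemma \ref{lem:isomorphisms}). The whole argument is purely linear-algebraic/duality bookkeeping; no inequality, monotonicity or compactness enters, so there is no serious obstacle — the only care needed is with the quotient $\bL^r(\Omega)/V$ and the duality identification $\bL^s(\Omega)\sim(\bL^r(\Omega))^*$, and with transporting the ``interface'' membership $\bLambda_{\bu}\in\bLambda(\bu)$ between the two formulations.

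First I would fix, once and for all, a representative of the class $\hat\bu\in\bL^r(\Omega)/V$ produced by Lemma \ref{lem:bdry-V0}, still denoted $\hat\bu$. Since the second equation of \eqref{eq:weak-form-main} is exactly $B(\bu)=F$ with $F$ the functional from the proof of Lemma \ref{lem:bdry-V0}, and $B$ factors through an isomorphism on $\bL^r(\Omega)/V$, a field $\bu\in\bL^r(\Omega)$ satisfies that second equation if and only if $\bv:=\bu-\hat\bu$ lies in $V$. Thus $\bv\mapsto\bu=\bv+\hat\bu$ is a bijection between $V$ and the affine set of velocities admissible for \eqref{eq:weak-form-main}, and $\bLambda(\bu)=\bLambda(\bv+\hat\bu)$, so specifying $\bLambda_{\bu}\in\bLambda(\bu)$ is the same as specifying $\bLambda_{\bv}\in\bLambda(\bv+\hat\bu)$.

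For the forward implication, take $(\bu,p)$ solving \eqref{eq:weak-form-main} with multiplier $\bLambda_{\bu}\in\bLambda(\bu)$ and set $\bv=\bu-\hat\bu\in V$. Restricting the first equation of \eqref{eq:weak-form-main} to test functions $\bvarphi\in V$ and using $V=\mathrm{Ker}(B)$, one gets $\ap{\bgrad p,\bvarphi}=B(\bvarphi)(p)=0$, hence $\ap{\bLambda_{\bu},\bvarphi}=\ap{\bff_0,\bvarphi}$ for all $\bvarphi\in V$, which is \eqref{eq:weak-form-V0} with $\bLambda_{\bv}:=\bLambda_{\bu}$. For the converse, let $\bv\in V$ and $\bLambda_{\bv}\in\bLambda(\bv+\hat\bu)$ satisfy \eqref{eq:weak-form-V0}, and put $\bu=\bv+\hat\bu$. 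The second equation of \eqref{eq:weak-form-main} holds since $B(\bv)=0$ and $B(\hat\bu)=F$. For the first, \eqref{eq:weak-form-V0} says exactly that $\bff_0-\bLambda_{\bv}$, seen in $(\bL^r(\Omega))^*$, annihilates $V$, i.e.\ belongs to $V^\perp$; by Lemma \ref{lem:isomorphisms} there is a unique $p\in W_0^{1,s}(\Omega)$ with $\bB^*(p)=\bff_0-\bLambda_{\bv}$, that is $\ap{\bgrad p,\bvarphi}=\ap{\bff_0-\bLambda_{\bv},\bvarphi}$ for all $\bvarphi\in\bL^r(\Omega)$. Rearranging yields the first equation of \eqref{eq:weak-form-main} with $\bLambda_{\bu}=\bLambda_{\bv}$, and $p\in W_0^{1,s}(\Omega)$ corresponds, via $\bff_0=\bff+\bgrad(Ep_0)$, to a pressure in $\widetilde W^{1,s}(\Omega)$ solving the original formulation \eqref{eq:weak-form-tilde}. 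This closes the equivalence, and in fact shows that the pressure is uniquely determined by $(\bv,\bLambda_{\bv})$ even though the velocity is not uniquely determined overall.
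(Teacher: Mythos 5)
Your proposal is correct and follows essentially the same route as the paper: decompose $\bu=\bv+\hat\bu$, observe that testing against $\bvarphi\in V=\mathrm{Ker}(B)$ kills the pressure gradient term for the forward direction, and recover the unique pressure in the converse via the isomorphism $\bB^*\:W_0^{1,s}(\Omega)\to V^\perp$ from Lemma \ref{lem:isomorphisms}. Your explicit remarks on fixing a representative of the class $\hat\bu$ and on the bijection with the affine set of admissible velocities are just slightly more careful bookkeeping of what the paper does implicitly.
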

\begin{proof}
  We first suppose that $(\bu,p)$ is solution to Problem \eqref{eq:weak-form-main}. Then we decompose $\bu$ as $\bu = (\bu - \hat\bu) + \hat\bu =: \bv + \hat\bu$. By \eqref{eq:weak-form-main}, there exists $\bLambda_{\bv} \in \bLambda(\bu) = \bLambda(\bv+\hat\bu)$ such that
  \begin{equation*}
    \ap{\bLambda_{\bv},\bvarphi} = \ap{\bff_0,\bvarphi} \quad \text{for all $\bvarphi \in V$}.
  \end{equation*}
Furthermore, for all $\psi\in W_0^{1,s}(\Omega)$ we find $\ap{\bgrad\psi,\bv} = \ap{\bgrad\psi,\bu} - \ap{\bgrad\psi,\hat\bu} = 0$, so that $\bv\in V$ and $\bv$ satisfies Problem \eqref{eq:weak-form-V0}.

  Suppose now that $\bv\in V$ satisfies Problem \eqref{eq:weak-form-V0}. Then, $\bLambda_{\bv} - \bff_0 \in V^{\mathrm{\perp}}$. By Lemma \ref{lem:isomorphisms} we know $\bB^*$ is an isomorphism from $W_0^{1,s}(\Omega)$ to $V^{\mathrm{\perp}}$, so that there exists a unique $p\in W_0^{1,s}(\Omega)$ with
  \begin{equation*}
    \ap{\bLambda_{\bv} - \bff_0,\bvarphi} = \bB^*(-p)(\bvarphi) = -\ap{\bgrad p,\bvarphi} \quad \text{for all $\bvarphi\in\bL^r(\Omega)$}.
  \end{equation*}
Furthermore, writing $\bu = \bv + \hat\bu$ we get
\begin{equation*}
  \ap{\bgrad\psi,\bu} = \ap{\bgrad\psi,\bv} + \ap{\bgrad\psi,\hat\bu} = \ap{\bgrad\psi,\hat\bu}  = - \int_\Omega q\psi + \int_{\Sigma_{\mt{v}}} u_0\psi,
\end{equation*}
since $\bv\in V$. We thus have that $(\bu,p)$ satisfies Problem \eqref{eq:weak-form-main}, with $p$ unique.
\end{proof}

\subsection{Energetic formulation}
\label{sec:energ-form}


We first give the definition of Fréchet subdifferential and strong local minimizer in our setting:
\begin{defn}[Fréchet subdifferential and strong local minimizer]
  Let $\F\:V\to \R$. For all $\bv\in V$ we define the \emph{(Fréchet) subdifferential} $\p\F(\bv)$ of $\F$ at $\bv$ by:
  \begin{equation*}
    \bg_{\bv} \in \p\F(\bv) \iff \bg_{\bv} \in V^*\; \text{ and }\; \forall\, \bvarphi\in V,\; \liminf_{\delta\to 0^+} \frac{\F(\bv+\delta\bvarphi) - \F(\bv)}{\delta} \geq \ap{\bg_{\bv},\bvarphi}.
  \end{equation*}
  We say that $\bv\in V$ is a \emph{(strong) local minimizer} of $\F$ if there exists $\eta>0$ such that for all $\bvarphi\in V$ we have $\F(\bv +\delta\bvarphi) \geq \F(\bv)$ for all $\delta\in[0,\eta)$.
\end{defn}

\begin{rem}
  \label{rem:minimizers}
  The important property of the subdifferential to keep in mind here is that if $\bv\in V$ is a local minimizer of a functional $\F\:V\to \R$, then $\bnull_{\bL^s(\Omega)}\in \p\F(\bv)$. When $\F$ is convex, the reverse of this statement is also true: if $\bnull_{\bL^s(\Omega)}\in \p\F(\bv)$, then $\bV$ is a local minimizer of $\F$.
\end{rem}

We write $\Psi\:[0,\infty)\to \R$ the function given by
\begin{equation}
  \label{eq:psi}
  \Psi(a) = \begin{cases} \Phi_1(a) & \text{for all $a\leq 1$},\\ \Phi_2(a) & \text{for all $a> 1$}, \end{cases}
\end{equation}
where $\Phi_1,\Phi_2\:[0,\infty) \to \R$ are primitives of $\tfrac{\phi_1}{2}$ and $\tfrac{\phi_2}{2}$ (cf. \eqref{eq:law-form}) such that $\Phi_1(1) = \Phi_2(1) = 0$. We define the \emph{dissipation} $\D\:V \to \R$ by
\begin{equation}
  \label{eq:dissipation}
  \D(\bv) = \int_\Omega \Psi(\norm{\bv + \hat\bu}^2) \quad \text{for all $\bv\in V$}.
\end{equation}
Thanks to our assumptions on $\phi_1$ and $\phi_2$ we easily get that the domain of $\D$ is indeed all of $V$, and it is thus a well-defined functional from $V$ into $\R$. Consider the following problem: find $\bv\in V$ such that there exists $\bg_{\bv}\in \p\D(\bv)$ satisfying
\begin{equation}
  \label{eq:energy-form-V0}
  \ap{\bg_{\bv},\bvarphi} = \ap{\bff_0,\bvarphi} \quad \text{for all $\bvarphi\in V$}.
\end{equation}
The following result holds:
\begin{lem}
  \label{lem:energy-implies-sol}
  Any solution to Problem \eqref{eq:energy-form-V0} is also solution to Problem \eqref{eq:weak-form-V0}.
\end{lem}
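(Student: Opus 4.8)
The plan is to show that if $\bv\in V$ solves the energetic formulation \eqref{eq:energy-form-V0}, then by computing the subdifferential $\p\D(\bv)$ explicitly we recover precisely the combined law $\bLambda$ from \eqref{eq:law-combined}, so that the element $\bg_\bv\in\p\D(\bv)$ satisfying \eqref{eq:energy-form-V0} is (the restriction to $V$ of) some $\bLambda_\bu\in\bLambda(\bv+\hat\bu)$, which gives \eqref{eq:weak-form-V0}. Concretely, the key identity to establish is
\begin{equation*}
  \p\D(\bv) \supseteq \bigl\{ \bg|_V \st \bg\in\bLambda(\bv+\hat\bu) \bigr\},
\end{equation*}
or at least that every $\bg_\bv\in\p\D(\bv)$ admits a representative in $\bLambda(\bv+\hat\bu)$ once tested against $V$; this is what makes Problem \eqref{eq:energy-form-V0} imply Problem \eqref{eq:weak-form-V0}.

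First I would fix $\bv\in V$, write $\bu := \bv+\hat\bu$, and take an arbitrary test direction $\bvarphi\in V$. For $\delta>0$ small I would expand the difference quotient
\begin{equation*}
  \frac{\D(\bv+\delta\bvarphi) - \D(\bv)}{\delta} = \frac{1}{\delta}\int_\Omega \Bigl( \Psi(\norm{\bu+\delta\bvarphi}^2) - \Psi(\norm{\bu}^2) \Bigr),
\end{equation*}
and pass to the $\liminf$ as $\delta\to 0^+$ using the pointwise a.e.\ one-sided differentiability of $a\mapsto\Psi(a)$. On $\Omega_1(\bu)$ and $\Omega_2(\bu)$ the integrand converges to $\phi_1(\norm{\bu}^2)\bu\cdot\bvarphi$ and $\phi_2(\norm{\bu}^2)\bu\cdot\bvarphi$ respectively (using $\Psi' = \phi_i/2$ there and the chain rule $\tfrac{d}{d\delta}\norm{\bu+\delta\bvarphi}^2|_{\delta=0} = 2\bu\cdot\bvarphi$). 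On the interface $\Gamma(\bu)$, where $\norm{\bu}=1$, the function $\Psi$ has (possibly) a corner — the left derivative being $\phi_1(1)/2 = \lambda_1/2$ and the right derivative $\phi_2(1)/2=\lambda_2/2$ — so the one-sided incremental ratio converges to $\max$ or $\min$ of the two values of $\Psi'$ against $\bu\cdot\bvarphi$, depending on the sign of $\bu\cdot\bvarphi$; in any case it is bounded and matches \emph{some} element of $\bL^s(\Omega)$ on $\Gamma(\bu)$. I would justify passing the limit inside the integral by dominated convergence, using \eqref{eq:bound-phi2} and the monotonicity of $\phi_1,\phi_2$ to produce an integrable majorant (this is where $\bvarphi,\bu\in\bL^r(\Omega)$ and the growth bound combine to give an $\bL^1$ bound via Hölder with conjugate exponents $r,s$). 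The outcome is that for any $\bg_\bv\in\p\D(\bv)$,
\begin{equation*}
  \ap{\bg_\bv,\bvarphi} \leq \int_{\Omega_1(\bu)} \phi_1(\norm{\bu}^2)\bu\cdot\bvarphi + \int_{\Omega_2(\bu)} \phi_2(\norm{\bu}^2)\bu\cdot\bvarphi + \int_{\Gamma(\bu)} \sigma\,\bu\cdot\bvarphi
\end{equation*}
for a suitable pointwise choice $\sigma\in[\lambda_1/2,\lambda_2/2]$ (up to swapping endpoints when $\lambda_1>\lambda_2$); applying this with $\bvarphi$ and with $-\bvarphi$ pins down $\bg_\bv$ on $V$ as the $V$-restriction of $\bLambda_1(\bu)+\bLambda_2(\bu)+\bs h\chi_{\Gamma(\bu)}$ for some $\bs h\in\bL^s(\Omega)$, i.e.\ an element of $\bLambda(\bu)$ in the sense of \eqref{eq:law-combined}. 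Substituting into \eqref{eq:energy-form-V0} then yields \eqref{eq:weak-form-V0} verbatim.

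The main obstacle is the careful treatment of the interface $\Gamma(\bu)$: because $\Psi$ is only piecewise $C^1$ with a jump in derivative across $a=1$, the map $\bv\mapsto\D(\bv)$ need not be Gâteaux-differentiable, and one must argue at the level of one-sided directional derivatives and the Fréchet subdifferential rather than a gradient. One has to check that the contribution on $\Gamma(\bu)$ produced by the difference quotient is measurable, lies in $\bL^s$, and — crucially — that it is genuinely \emph{free} (any admissible value is realized by an appropriate $\bs h$), so that no constraint on the interface sneaks in; this is exactly the multi-valued slack built into \eqref{eq:law-combined}, so the matching is by design, but making the measurable selection rigorous (e.g.\ invoking a measurable selection theorem or an explicit formula for $\sigma$ in terms of $\sgn(\bu\cdot\bvarphi)$) is the delicate point. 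A secondary technical nuisance is the domination step when $r>2$ and $\phi_2$ has genuine $(r-2)/2$-growth: one must be slightly careful that the incremental ratios are dominated uniformly in small $\delta$, which follows from convexity of $a\mapsto\Phi_2(a)$ on $[1,\infty)$ (monotonicity of $\phi_2$) giving a monotone — hence controlled — difference quotient.
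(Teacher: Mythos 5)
Your overall strategy---differentiate $\D$ along $\pm\bvarphi$, identify the subgradient with $\bLambda_1,\bLambda_2$ away from the interface, and let the multivalued slack in \eqref{eq:law-combined} absorb the interface---is the same as the paper's, but your final step has a genuine gap. Write $\bu=\bv+\hat\bu$. At a point of $\Gamma(\bu)$ the one-sided derivative of $\delta\mapsto\Psi(\norm{\bu+\delta\bvarphi}^2)$ at $\delta=0^+$ is $\lambda_2(\bu\cdot\bvarphi)^+-\lambda_1(\bu\cdot\bvarphi)^-$ (coefficient $\lambda_i$, not $\lambda_i/2$: the chain rule cancels the $\tfrac12$), so the interface density $\sigma$ you obtain depends on the test direction $\bvarphi$ through $\sgn(\bu\cdot\bvarphi)$. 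Consequently, testing with $\bvarphi$ and $-\bvarphi$ does \emph{not} pin down $\ap{\bg_{\bv},\bvarphi}$; it only traps it between two $\bvarphi$-dependent bounds (which, incidentally, are incompatible when $\lambda_1>\lambda_2$ unless the interface terms vanish, so no ``interval for $\sigma$'' survives there). A measurable selection theorem does not address the real difficulty either: the problem is not measurability in $\bx$ but the need for a \emph{single} $\bs{h}\in\bL^s(\Omega)$, independent of $\bvarphi$, representing the interface contribution simultaneously for all $\bvarphi\in V$; without it you have not exhibited an element $\bLambda_{\bv}\in\bLambda(\bu)$ satisfying \eqref{eq:weak-form-V0}.

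There are two ways to close this. The paper avoids the corner of $\Psi$ altogether: it applies the subdifferential inequality only to $\pm\bvarphi$ supported in $\Omega_1(\bu)$, respectively $\Omega_2(\bu)$, where dominated convergence and the differentiability of $\Phi_1,\Phi_2$ give the exact identities $\ap{\bg_{\bv},\bvarphi}=\ap{\bLambda_1(\bu),\bvarphi}$ and $\ap{\bg_{\bv},\bvarphi}=\ap{\bLambda_2(\bu),\bvarphi}$, and then it uses the interface freedom ``by design'' by taking $\bs{h}:=\bg_{\bv}$ itself, i.e.\ $\bLambda_{\bv}=\bLambda_1(\bu)+\bLambda_2(\bu)+\bg_{\bv}\chi_{\Gamma(\bu)}$, so no selection problem ever arises. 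Alternatively, your general-$\bvarphi$ sandwich can be salvaged by a representation argument rather than a selection one: it shows that the defect functional $T(\bvarphi):=\ap{\bg_{\bv}-\bLambda_1(\bu)-\bLambda_2(\bu),\bvarphi}$ satisfies $\abs{T(\bvarphi)}\leq\max(\lambda_1,\lambda_2)\int_{\Gamma(\bu)}\abs{\bu\cdot\bvarphi}$, hence $T$ vanishes on test fields vanishing a.e.\ on $\Gamma(\bu)$ and is bounded by a constant times the $\bL^r$-norm of $\bvarphi$ restricted to $\Gamma(\bu)$; a Hahn--Banach extension plus Riesz representation in $\bL^r$ of the interface then yields one fixed $\bs{h}\in\bL^s$ supported on $\Gamma(\bu)$ with $T(\bvarphi)=\int_{\Gamma(\bu)}\bs{h}\cdot\bvarphi$ for all $\bvarphi\in V$. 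That is the missing argument your $\sigma$ was standing in for; with it (or with the paper's restricted-support tests) the rest of your outline goes through.
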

\begin{proof}
  Suppose that $\bv\in V$ is solution to Problem \eqref{eq:energy-form-V0}. Then we can pick $\bg_{\bv}\in\p\D(\bv)\subset\bL^s(\Omega)$ so that \eqref{eq:energy-form-V0} holds. By definition, for all $\bvarphi\in V$ we must have
  \begin{equation*}
    \liminf_{\delta\to0^+} \frac{\D(\bv+\delta\bvarphi) - \D(\bv)}{\delta} \geq \ap{\bg_{\bv},\bvarphi}.
  \end{equation*}
    Write $V_1$ the subset of $V$ consisting of the functions which are supported in $\Omega_1(\bv+\hat\bu)$. Then, for all $\bvarphi\in V_1$, since $-\bvarphi\in V_1$ as well, we get
  \begin{equation*}
    \liminf_{\delta\to0^+} \frac{1}{\delta} \int_{\Omega_1(\bv+\hat\bu)} \left( \Psi(\norm{\bv +\delta\bvarphi + \hat\bu}^2) - \Phi_1(\norm{\bv + \hat\bu}^2) \right) \geq \ap{\bg_{\bv},\bvarphi}
  \end{equation*}
  and
  \begin{equation*}
    \liminf_{\delta\to0^+} \frac{1}{\delta} \int_{\Omega_1(\bv+\hat\bu)} \left( \Psi(\norm{\bv -\delta\bvarphi + \hat\bu}^2) - \Phi_1(\norm{\bv + \hat\bu}^2) \right) \geq -\ap{\bg_{\bv},\bvarphi},
  \end{equation*}
  which, by Lebesgue's dominated convergence theorem and the differentiability of $\Phi_1$, yield
  \begin{equation*}
    \int_\Omega \bLambda_1(\bv+\hat\bu)\cdot\bvarphi = \int_{\Omega_1(\bv+\hat\bu)} \phi_1(\norm{\bv+\hat\bu}^2) (\bv +\hat\bu)\cdot\bvarphi  \geq \ap{\bg_{\bv},\bvarphi}
  \end{equation*}
  and
  \begin{equation*}
    -\int_\Omega \bLambda_1(\bv+\hat\bu)\cdot\bvarphi = -\int_{\Omega_1(\bv+\hat\bu)} \phi_1(\norm{\bv+\hat\bu}^2) (\bv +\hat\bu)\cdot\bvarphi  \geq -\ap{\bg_{\bv},\bvarphi}.
  \end{equation*}
  All in all we get
  \begin{equation*}
    \ap{\bg_{\bv},\bvarphi} = \ap{\bLambda_1(\bv+\hat\bu),\bvarphi} \quad \text{for all $\bvarphi\in V_1$}.
  \end{equation*}
  Similarly, denoting by $V_2$ the subset of $V$ consisting of the functions which are supported in $\Omega_2(\bv+\hat\bu)$, we get
  \begin{equation*}
    \ap{\bg_{\bv},\bvarphi} = \ap{\bLambda_2(\bv+\hat\bu),\bvarphi} \quad \text{for all $\bvarphi\in V_2$}.
  \end{equation*}
  Thus the function defined by
  \begin{equation*}
    \bLambda_{\bv} = \bLambda_1(\bv+\hat\bu) + \bLambda_2(\bv+\hat\bu) + \bg_{\bv} \,\chi_{\Gamma(\bv+\hat\bu)}
  \end{equation*}
  is such that $\ap{\bLambda_{\bv},\bvarphi} = \ap{\bg_{\bv},\bvarphi}$ for all $\bvarphi\in V$. Therefore, from \eqref{eq:energy-form-V0} we obtain
  \begin{equation*}
    \ap{\bLambda_{\bv},\bvarphi} = \ap{\bff_0,\bvarphi} \quad \text{for all $\bvarphi\in V$}.
  \end{equation*}
  Moreover $\bLambda_{\bv}\in\bLambda(\bv+\hat\bu)$, where we recall $\bLambda$ is in \eqref{eq:law-combined}. Hence $\bv$ is solution to Problem \eqref{eq:weak-form-V0}.
\end{proof}

We now define the \emph{energy} $\E\:V \to \R$ associated to $\D$ by 
\begin{equation}
  \label{eq:energy}
  \E(\bv) = \D(\bv) - \ap{\bff_0,\bv + \hat\bu} \quad \text{for all $\bv\in V$}.
\end{equation}
Let us write $M_\E\subset V$ the, possibly empty, set of local minimizers of $\E$. Consider the following minimization problem: find $\bv\in V$ such that 
\begin{equation}
  \label{eq:minimization}
  \bv \in M_\E.
\end{equation}
By Remark \ref{rem:minimizers}, any solution to Problem \eqref{eq:minimization} is also solution to Problem \eqref{eq:energy-form-V0}; if $\E$ is convex these problems are actually equivalent. By Lemma \ref{lem:energy-implies-sol} it therefore suffices to find a solution to Problem \eqref{eq:minimization} in order to get the desired existence result on our original problem given in \eqref{eq:weak-form-main}. The rest of this section will thus be solely dedicated to solving Problem \eqref{eq:minimization}.

\subsection{Case $\lambda_1\leq\lambda_2$}
\label{sec:case-1-less-than-2}

The result we wish to show here is the following:
\begin{thm}[Existence and uniqueness when $\lambda_1 \leq \lambda_2$]
  \label{thm:wp-1-less-than-2}
  Suppose that $\lambda_1 \leq \lambda_2$. Then, Problem \eqref{eq:minimization} has a unique solution.
\end{thm}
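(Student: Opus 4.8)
\emph{Proof proposal.} The plan is to solve the minimization problem \eqref{eq:minimization} by the direct method applied to the energy $\E$ of \eqref{eq:energy}. The single decisive ingredient — and the only place where the hypothesis $\lambda_1\leq\lambda_2$ is used — is that $\E$ is \emph{convex}. Granting this, every local minimizer of $\E$ is automatically a global one, so $M_\E$ coincides with the set of global minimizers of $\E$; hence it suffices to show that $\E$ attains its infimum and that the minimizer is unique. (The output is then a solution of \eqref{eq:minimization} and, by Lemma \ref{lem:energy-implies-sol} together with the reduction of Section \ref{sec:restriction-V0}, also of the original weak problem.)

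First I would prove that the integrand $\bw\mapsto\Psi(\norm{\bw}^2)$ is convex on $\R^d$. Since $\phi_1,\phi_2\geq0$ and they are increasing on $[0,1]$ and $[1,\infty)$ respectively, the one-sided derivative $\Psi'$, equal to $\phi_1/2$ on $(0,1)$ and to $\phi_2/2$ on $(1,\infty)$, is nonnegative; and at the single kink $a=1$ it jumps from $\lambda_1/2$ \emph{up} to $\lambda_2/2$ precisely because $\lambda_1\leq\lambda_2$. Hence $\Psi$ is nondecreasing and convex on $[0,\infty)$. As $\bw\mapsto\norm{\bw}^2$ is convex on $\R^d$, the composition $\Psi(\norm{\bw}^2)$ is convex. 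Integrating over $\Omega$ after the affine substitution $\bv\mapsto\bv+\hat\bu$ shows $\D$ is convex on $V$, and therefore so is $\E=\D-\ap{\bff_0,\,\cdot\,+\hat\bu}$. This is exactly the step that fails for $\lambda_1>\lambda_2$, where the jump of $\Psi'$ is downward, $\Psi$ is no longer convex, and one is driven to the restricted result of Theorem \ref{thm:wp-2-less-than-1}. By Remark \ref{rem:minimizers}, convexity of $\E$ makes \eqref{eq:minimization} and \eqref{eq:energy-form-V0} equivalent, and identifies $M_\E$ with the global minimizers.

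For existence I would run the direct method on $V$, which is a reflexive Banach space, being a strongly (hence weakly) closed subspace of $\bL^r(\Omega)$ with $r\geq2$. Coercivity follows from \eqref{eq:bound-phi2}: integrating $ca^{(r-2)/2}\leq\phi_2(a)$ yields $\Phi_2(a)\geq\frac{c}{r}\bigl(a^{r/2}-1\bigr)$ for $a\geq1$, while on $[0,1]$ one has $\Psi=\Phi_1\geq\Phi_1(0)\geq-\lambda_1/2$ (using that $\phi_1\leq\lambda_1$ there), so that $\Psi(\norm{\bw}^2)\geq\frac{c}{r}\norm{\bw}^r-C'$ for all $\bw\in\R^d$ and some $C'>0$; hence $\D(\bv)\geq\frac{c}{r}\norm{\bv+\hat\bu}_{\bL^r(\Omega)}^r-C'\abs{\Omega}$, and combined with $\abs{\ap{\bff_0,\bv+\hat\bu}}\leq\norm{\bff_0}_{\bL^s(\Omega)}\norm{\bv+\hat\bu}_{\bL^r(\Omega)}$ and $\norm{\bv+\hat\bu}_{\bL^r(\Omega)}\geq\norm{\bv}_{\bL^r(\Omega)}-\norm{\hat\bu}_{\bL^r(\Omega)}$ this forces $\E(\bv)\to+\infty$ as $\norm{\bv}_{\bL^r(\Omega)}\to\infty$ since $r>1$. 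Weak lower semicontinuity of $\E$ follows from its convexity together with the matching upper bound $\Psi(\norm{\bw}^2)\leq C''(1+\norm{\bw}^r)$ coming from the right-hand inequality in \eqref{eq:bound-phi2} and the continuity of $\phi_1$: these bounds make $\D$ finite and bounded on bounded sets, hence continuous on $\bL^r(\Omega)$, hence (being convex) weakly lower semicontinuous. A minimizing sequence for $\E$ is then bounded by coercivity, so up to a subsequence it converges weakly in $V$, and the limit minimizes $\E$; thus $M_\E\neq\emptyset$.

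Finally I would obtain uniqueness by upgrading convexity to \emph{strict} convexity. The map $\bw\mapsto\norm{\bw}^2$ is strictly convex on $\R^d$, and under the assumptions $\Psi$ is strictly increasing on $(0,\infty)$ — because $\phi_2>0$ on $(1,\infty)$ by \eqref{eq:bound-phi2} and $\phi_1>0$ on $(0,1)$ since $\phi_1$ is increasing with $\phi_1(0)\geq0$ — so the composition $\Psi(\norm{\bw}^2)$ is strictly convex: for $\bw_1\neq\bw_2$ one gets $\Psi\bigl(\norm{\tfrac{\bw_1+\bw_2}{2}}^2\bigr)\leq\Psi\bigl(\tfrac12\norm{\bw_1}^2+\tfrac12\norm{\bw_2}^2\bigr)<\tfrac12\Psi(\norm{\bw_1}^2)+\tfrac12\Psi(\norm{\bw_2}^2)$, using strict convexity of $\norm{\cdot}^2$ and monotonicity for the first inequality and convexity of $\Psi$ for the second. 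Hence $\D$, and thus $\E$, is strictly convex on $V$; a strictly convex functional has at most one minimizer, which combined with the existence above and with $M_\E$ being the set of global minimizers gives that \eqref{eq:minimization} has exactly one solution.

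I expect the conceptual crux — rather than a technical difficulty — to be the convexity (and strict convexity) of $\bw\mapsto\Psi(\norm{\bw}^2)$: this is where the \emph{direction} of the permeability jump is decisive, and some care is needed at the interface $\norm{\bw}=1$, where $\Psi$ is only Lipschitz and not differentiable. The remaining pieces — coercivity from \eqref{eq:bound-phi2} with $r\geq2$, continuity/weak lower semicontinuity, and the direct method on the reflexive space $V$ — are routine.
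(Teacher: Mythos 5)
Your proposal is correct and follows essentially the same route as the paper: convexity of the integrand $\Psi\circ\norm{\cdot}^2$ coming precisely from $\lambda_1\leq\lambda_2$, coercivity of $\E$ from the left-hand inequality in \eqref{eq:bound-phi2}, weak lower semicontinuity from convexity, existence by the direct method on the weakly closed subspace $V$, and uniqueness from strict convexity obtained by composing the strictly increasing convex $\Psi$ with the strictly convex $\norm{\cdot}^2$. One cosmetic slip: in your two-step chain the strict sign belongs on the first inequality (strict convexity of $\norm{\cdot}^2$ together with strict monotonicity of $\Psi$), while the second inequality, justified only by convexity of $\Psi$, should read $\leq$; with the signs swapped back the argument is exactly the paper's.
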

\begin{proof}
  Let us prove that the integrand $\Psi\circ\norm{\cdot}^2$ (cf. \eqref{eq:psi}) of $\D$ is strictly convex. Define the functions $\bar\Phi_1,\bar\Phi_2\:[0,\infty)\to\R$ by
  \begin{equation*}
    \bar \Phi_1(a) = 
    \begin{cases}
      \Phi_1(a) & \text{for all $a\leq1$},\\
      \tfrac{\lambda_1}{2}(a-1) & \text{for all $a> 1$},
    \end{cases}
    \quad \text{and} \quad 
    \bar \Phi_2(a) = 
    \begin{cases}
      \tfrac{\lambda_2}{2}(a-1) & \text{for all $a\leq1$},\\
      \Phi_2(a) & \text{for all $a> 1$}.\\
    \end{cases}
  \end{equation*}
  Then $\bar\Phi_1$ and $\bar\Phi_2$ are differentiable with 
  \begin{equation*}
    \bar \Phi_1'(a) = \frac12
    \begin{cases}
      \phi_1(a) & \text{for all $a\leq1$},\\
      \lambda_1 & \text{for all $a> 1$},
    \end{cases}
    \quad \text{and} \quad 
    \bar \Phi_2'(a) = \frac12
    \begin{cases}
      \lambda_2 & \text{for all $a\leq1$},\\
      \phi_2(a) & \text{for all $a> 1$}.\\
    \end{cases}
  \end{equation*}
  Since $\phi_1$ and $\phi_2$ are increasing on $[0,1]$ and $[1,\infty)$, respectively, and $\phi_1(1) = \lambda_1$ and $\phi_2(1) = \lambda_2$, the derivatives $\bar\Phi_1'$ and $\bar\Phi_2'$ are also increasing so that $\bar\Phi_1$ and $\bar\Phi_2$ are convex. Furthermore, because $\lambda_1\leq \lambda_2$ and $\phi_2$ is increasing, we have $\bar\Phi_2'(a) \geq \bar\Phi_1'(a)$ for all $a>1$; thus, the fact that $\Phi_1(1) = \Phi_2(1)$ (and so $\bar \Phi_1(1) = \bar \Phi_2(1)$) yields $\bar\Phi_1(a)\leq \bar\Phi_2(a)$ for all $a>1$. Similarly, we get that $\bar\Phi_1(a)\geq \bar\Phi_2(a)$ for all $a<1$. Let $a,b \in [0,\infty)$ and $t\in[0,1]$. If $a,b \leq 1$, then $(1-t)a+tb\leq1$ and
  \begin{align*}
    \Psi((1-t) a + tb) &= \Phi_1((1-t) a + tb) = \bar\Phi_1((1-t) a + tb)\\
    &\leq (1-t)\bar\Phi_1(a) + t\bar\Phi_1(b) = (1-t)\Psi(a) + t\Psi(b),
  \end{align*}
  and similarly if $a,b > 1$. If now $a\leq1$, $b>1$ and $(1-t)a+tb\leq1$, then we have
  \begin{align*}
    \Psi((1-t) a + tb) &= \Phi_1((1-t) a + tb) = \bar\Phi_1((1-t) a + tb)\\
    &\leq (1-t)\bar\Phi_1(a) + t\bar\Phi_1(b) \leq (1-t)\bar\Phi_1(a) + t\bar\Phi_2(b) = (1-t)\Psi(a) + t\Psi(b),
  \end{align*}
  and similarly if $(1-t)a+tb>1$ or $a>1$ and $b\leq1$. In all cases, we see that
  \begin{equation*}
    \Psi((1-t)a+tb) \leq (1-t)\Psi(a) + t\Psi(b),
  \end{equation*}
  so that $\Psi$ is convex. Note that one could reach the same conclusion using that 
  \begin{equation*}
      \Psi(a) = \max(\bar \Phi_1(a),\bar \Phi_2(a)) \quad \text{for all $a\geq0$}.
  \end{equation*}
  Since $\phi_1$ and $\phi_2$ are positive on $(0,1)$ and $[1,\infty)$, respectively, we get that $\Psi$ is increasing. Therefore, the function $\Psi\circ\norm{\cdot}^2$ is strictly convex.  

  We now want to use the direct method of the calculus of variations to show that $\E$ has in fact a unique global (and thus local) minimizer. Let $(\bv_n)_{n\in\N} \subset V$ be a minimizing sequence for $\E$. Then we know there exists $N\in\N$ large enough and $K > 0$ such that $\E(\bv_n) < K$ for all $n>N$. Without loss of generality we can therefore assume that the sequence $(\E(\bv_n))_{n\in\N}$ is bounded by some constant $K>0$. Hence, thanks to the left-hand inequality in \eqref{eq:bound-phi2}, for all $n\in\N$ we have
  \begin{align*}
    K > \E(\bv_n) &= \int_{\Omega_1(\bv_n+\hat\bu)} \Phi_1(\norm{\bv_n+\hat\bu}^2) + \int_{\Omega_2(\bv_n+\hat\bu)} \Phi_2(\norm{\bv_n+\hat\bu}^2)- \int_\Omega \bff_0\cdot(\bv_n+\hat\bu)\\
              &\geq \Phi_1(0) \abs{\Omega} + \frac{c}{r}\int_{\Omega_2(\bv_n+\hat\bu)} \norm{\bv_n+\hat\bu}^r - \norm{\bff_0}_{\bL^s(\Omega)}^s\norm{\bv_n+\hat\bu}_{\bL^r(\Omega)}^r\\
              &\geq \left(\Phi_1(0)-\frac{c}{r}\right) \abs{\Omega} + \frac{c}{r}\norm{\bv_n+\hat\bu}_{\bL^r(\Omega)}^r - \norm{\bff_0}_{\bL^s(\Omega)}\norm{\bv_n+\hat\bu}_{\bL^r(\Omega)},
  \end{align*}
which shows that the sequence $(\norm{\bv_n}_{\bL^r(\Omega)})_{n\in\N}$ is bounded. Thus we can extract a subsequence from $(\bv_n)_{n\in\N}$, still denoted $(\bv_n)_{n\in\N}$ which converges weakly to some $\bv\in\bL^r(\Omega)$. Since further $V$ is weakly closed we in fact have $\bv\in V$. Because $\Psi\circ\norm{\cdot}^2$ is convex, the dissipation $\D$ is weakly lower semi-continuous and so is the energy $\E$. We therefore yield
\begin{equation*}
  \inf_{\bw\in V}\E(\bw) = \liminf_{n\to\infty} \E(\bv_n) \geq \E(\bv) \geq \inf_{\bw\in V}\E(\bw),
\end{equation*}
so that $\E(\bv) = \inf_{\bw\in V}\E(\bw)$ and $\bv$ is a global minimizer of $\E$ and so $\bv\in M_\E$.

To show that $M_\E$ is a singleton it is enough to prove that $\E$ is strictly convex. Let $\bv,\bw\in V$ and $t\in(0,1)$. Suppose furthermore that $\bv\neq\bw$ and write $A\subset\Omega$ the set where $\bv$ and $\bw$ are different; the Lebesgue measure of $A$ is therefore positive. Note that we must have $\Psi(a)\neq0$ for all $a\neq1$. Using the strict convexity of $\Psi\circ\norm{\cdot}^2$ we therefore get
  \begin{align*}
    \D((1-t)\bv+t\bw) &= \int_\Omega\Psi(\norm{(1-t)\bv+t\bw+\hat\bu}^2) = \int_\Omega \Psi(\norm{(1-t)(\bv+\hat\bu)+t(\bw+\hat\bu)}^2)\\
                      &= (1-t)\int_{\Omega\setminus A} \Psi(\norm{\bv+\hat\bu}^2) + t\int_{\Omega\setminus A} \Psi(\norm{\bw+\hat\bu}^2)\\
                      &\quad + \int_{A} \Psi(\norm{(1-t)(\bv+\hat\bu)+t(\bw+\hat\bu)}^2)\\
                      &< (1-t)\int_{\Omega\setminus A} \Psi(\norm{\bv+\hat\bu}^2) + t\int_{\Omega\setminus A} \Psi(\norm{\bw+\hat\bu}^2)\\
                      &\quad + (1-t)\int_{A} \Psi(\norm{\bv+\hat\bu}^2) +t\int_{A} \Psi(\norm{\bw+\hat\bu}^2)\\
                      &= (1-t)\int_\Omega \Psi(\norm{\bv+\hat\bu}^2) +t\int_\Omega \Psi(\norm{\bw+\hat\bu}^2) = (1-t)\D(\bv) + t\D(\bw),
  \end{align*}
  so that the dissipation $\D$ is strictly convex. Consequently, the energy $\E$ is also strictly convex and the proof is over. 
\end{proof}

\subsection{Case $\lambda_1 > \lambda_2$}
\label{sec:case-2-less-than-1}

This case is more difficult to tackle than the case $\lambda_1 \leq \lambda_2$. Indeed, we lose the convexity of the integrand $\Psi\circ\norm{\cdot}^2$ (cf. proof of Theorem \ref{thm:wp-1-less-than-2}), which by Tonelli's theorem of functional analysis means that $\E$ is \emph{not} weakly lower semi-continuous. As a consequence we cannot use the direct method of the calculus of variations in the weak topology to deduce the existence of a minimizer of $\E$. 

To simplify our task at this point, we shall restrict to the one-dimensional case (i.e., $d=1$) and leave the higher-dimensional case for future investigation. The results we present here therefore apply to the numerical experiments we present below. The one-dimensional case is simpler since we can easily characterize the space $V$ depending on the boundary conditions, as will be clear from the proof of the following theorem:
\begin{thm}[Existence and uniqueness when $\lambda_1 > \lambda_2$]
  \label{thm:wp-2-less-than-1}
  Let $d=1$ and suppose that $\lambda_1 > \lambda_2$. If $\mt{Vol}^0(\Sigma_{\mt{v}}) = 0$, then Problem \eqref{eq:minimization} has a solution. If instead $\mt{Vol}^0(\Sigma_{\mt{v}}) > 0$, then Problem \eqref{eq:minimization} has a unique solution.
\end{thm}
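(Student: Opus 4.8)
The plan is to exploit the very rigid structure of the constraint space $V$ when $d=1$. First I would note that, $\Omega$ being open, bounded and connected in $\R$, it is an interval, say $\Omega=(a,b)$, so that $\p\Omega=\{a,b\}$ and the partition $\{\Sigma_{\mt{v}},\Sigma_{\mt{p}}\}$ of $\p\Omega$ takes only four forms. A field $\bvarphi\in\bL^r(\Omega)$ lies in $V$ exactly when $\int_a^b\psi'\bvarphi=0$ for all $\psi\in W_0^{1,s}(\Omega)$; testing first against $\psi\in\Cont^\infty_c(a,b)$ forces $\bvarphi$ to be a.e.\ constant, and whether that constant is free or forced to be zero is then read off from which boundary values of $\psi$ are admissible---equivalently, from whether $\dive\bu=q$ comes with a velocity datum at an endpoint. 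Running this case analysis, while being careful that for $\mt{Vol}^{d-1}(\Sigma_{\mt{p}})=0$ the space $W_0^{1,s}(\Omega)$ is defined by the zero-average constraint rather than a Dirichlet condition, I expect to obtain the dichotomy: $V=\{\bnull\}$ when $\mt{Vol}^0(\Sigma_{\mt{v}})>0$, and $V=\R\chi_\Omega$, the line of constant fields, when $\mt{Vol}^0(\Sigma_{\mt{v}})=0$ (so that $\Sigma_{\mt{p}}=\p\Omega$). This observation is the heart of the matter: it turns the infinite-dimensional problem \eqref{eq:minimization}, for which the direct method is unavailable because $\E$ fails to be weakly lower semi-continuous, into either a degenerate or a genuinely one-dimensional minimization.

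If $\mt{Vol}^0(\Sigma_{\mt{v}})>0$, there is nothing left to prove: $\E$ is defined on the single point $\bnull\in V$, which (taking any $\eta>0$ in the definition of strong local minimizer) belongs to $M_\E$ and is manifestly its unique element, so Problem \eqref{eq:minimization} has the unique solution $\bv=\bnull$.

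If $\mt{Vol}^0(\Sigma_{\mt{v}})=0$, I would fix a representative of $\hat\bu$ in $\bL^r(\Omega)$, write each $\bv\in V$ as $\bv=t\chi_\Omega$ with $t\in\R$, and study the real function $g(t):=\E(t\chi_\Omega)$; minimizing $g$ over $\R$ is equivalent to Problem \eqref{eq:minimization}. Two elementary facts finish the job. \emph{Continuity of $g$}: $\Psi$ is continuous and, integrating the upper bound in \eqref{eq:bound-phi2}, satisfies $\Psi(a)\leq C_1(1+a^{r/2})$ for some $C_1>0$, so along any bounded set of values of $t$ the integrands $\Psi(\norm{t\chi_\Omega+\hat\bu}^2)$ are dominated by a single $L^1(\Omega)$ function; Lebesgue's dominated convergence theorem gives continuity of $t\mapsto\D(t\chi_\Omega)$, and $t\mapsto\ap{\bff_0,t\chi_\Omega+\hat\bu}$ is affine. \emph{Coercivity of $g$}: exactly as in the proof of Theorem \ref{thm:wp-1-less-than-2}, the lower bound in \eqref{eq:bound-phi2} yields $\Phi_2(a)\geq\tfrac{c}{r}(a^{r/2}-1)$, so that
\begin{equation*}
  g(t)\geq\tfrac{c}{r}\norm{t\chi_\Omega+\hat\bu}_{\bL^r(\Omega)}^r-\norm{\bff_0}_{\bL^s(\Omega)}\norm{t\chi_\Omega+\hat\bu}_{\bL^r(\Omega)}-C_0
\end{equation*}
for some $C_0>0$; since $\norm{t\chi_\Omega+\hat\bu}_{\bL^r(\Omega)}\geq\abs{t}\,\abs{\Omega}^{1/r}-\norm{\hat\bu}_{\bL^r(\Omega)}\to\infty$ and $r\geq2$, we get $g(t)\to+\infty$ as $\abs{t}\to\infty$. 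A continuous, coercive function on $\R$ attains its infimum at some $t^*$, and $\bv^*:=t^*\chi_\Omega$ is then a global---hence strong local---minimizer of $\E$ over $V$, so $\bv^*\in M_\E$ and Problem \eqref{eq:minimization} is solved. I would not claim uniqueness in this case: $g$ need not be convex---which is precisely the effect of $\lambda_1>\lambda_2$---so there may be several minimizers.

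The step I expect to be the main obstacle is the first one, namely pinning down $V$ in each boundary-condition configuration, the $\mt{Vol}^{d-1}(\Sigma_{\mt{p}})=0$ sub-case (where the pressure-average constraint modifies $W_0^{1,s}(\Omega)$) requiring particular care. Once $V$ has been identified, the rest is one-variable calculus, recycling the coercivity estimate already developed in the proof of Theorem \ref{thm:wp-1-less-than-2}.
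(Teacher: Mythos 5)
Your proposal is correct and follows essentially the same route as the paper: identify $V$ in one dimension according to the boundary data (the zero space when $\mt{Vol}^0(\Sigma_{\mt{v}})>0$, the constants when $\mt{Vol}^0(\Sigma_{\mt{v}})=0$), then conclude trivially in the first case and by a one-variable coercivity-plus-continuity (direct method) argument in the second. The only cosmetic differences are that the paper characterizes $V$ by constructing explicit primitives in $W_0^{1,s}(\Omega)$ and testing against the function itself, and obtains lower semi-continuity via Fatou's lemma rather than continuity via dominated convergence.
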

\begin{proof}
Without loss of generality, take $\Omega = (0,1)$. 

\medskip
\ul{Case $\mt{Vol}^0(\Sigma_{\mt{v}}) = 0$.} First note that here $\Sigma_{\mt{p}} = \{0,1\}$. Let us characterize $V$ in this case. To this end, note that any $\eta\in V$ with $\int_0^1 \eta = 0$ has a primitive in $W_0^{1,s}((0,1))$; indeed, the function $\psi\: (0,1) \to \R$ defined by
\begin{equation*}
    \psi(x) = \int_0^x \eta \quad \text{for all $x\in(0,1)$}  
\end{equation*}
satisfies $\psi' = \eta$ and $\psi(0) = \psi(1) = 0$. Let now $v \in V$ and define $\eta\in V$ as
\begin{equation*}
    \eta(x) = v(x) - \int_0^1 v  \quad \text{for all $x\in(0,1)$},
\end{equation*}
so that obviously $\int_0^1 \eta = 0$. Write $\psi\in W_0^{1,s}((0,1))$ a primitive of $\eta$ and compute, for all $\varphi\in V$,
\begin{equation*}
    \int_0^1 \left( v - \int_0^1 v \right) \varphi = \int_0^1 \eta\varphi = \int_0^1 \psi' \varphi = 0.
\end{equation*}
Thus $v = \int_0^1 v$ and $v$ is constant. Since constant functions clearly belong to $V$, this shows that $V$ is in fact the set of all constant functions on $(0,1)$ and we can identify $V$ with $\R$. 

This in particular means that the energy $\E$ defined in \eqref{eq:energy} can be identified with the following function $E\:\R\to\R$:
\begin{equation*}
   E(\alpha) = \int_0^1 \Psi((\alpha+\hat u)^2) - \alpha \int_0^1 f_0 \quad \text{for all $\alpha\in\R$},
\end{equation*}
where we recall that $\Psi$ is given in \eqref{eq:psi} and $\hat u := \hat\bu$ is as in Lemma \ref{lem:bdry-V0}. We can use the direct method of the calculus of variations on $E$ in the Euclidean topology in $\R$. Let $(\alpha_n)_{n\in\N} \subset \R$ be a minimizing sequence for $E$. Using a similar calculation as in the proof of Theorem \ref{thm:wp-1-less-than-2}, thanks to \eqref{eq:bound-phi2} we can show that $(\alpha_n)_{n\in\N}$ is bounded in $\R$. Therefore, there exists a subsequence of $(\alpha_n)_{n\in\N}$, still denoted by $(\alpha_n)_{n\in\N}$,  converging to some $\alpha\in\R$. By continuity of $\Psi$ and Fatou's lemma we get that $E$ is lower semi-continuous on $\R$, so that
\begin{equation*}
  \inf_{\beta\in\R} E(\beta) = \liminf_{n\to\infty} E(\alpha_n) \geq E(\alpha) \geq \inf_{\beta\in\R} E(\beta),
\end{equation*}
showing that $\alpha$ is a global minimizer of $E$ and the constant function $\alpha$ belongs to $M_\E$.

\medskip
\ul{Case $\mt{Vol}^0(\Sigma_{\mt{v}}) > 0$.} Note that here $\Sigma_{\mt{p}} \in \{\emptyset\} \cup \{\{0\}\} \cup \{\{1\}\}$. Similarly to the previous case, let us characterize $V$. Note that any $v\in V$ has a primitive in $W_0^{1,s}((0,1))$; indeed, the function $\psi\: (0,1) \to \R$ defined for all $x\in(0,1)$ by
\begin{equation*}
  \begin{cases}
    \psi(x) = \displaystyle \int_0^x v - \int_0^1 \left(\int_0^y v\right) \d y & \text{if $\Sigma_{\mt{p}}=\emptyset$},\\[10pt]
    \psi(x) = \displaystyle \int_0^x v - \int_0^1 v & \text{if $\Sigma_{\mt{p}} = \{1\}$},\\[10pt]
    \psi(x) = \displaystyle \int_0^x v & \text{if $\Sigma_{\mt{p}} = \{0\}$},\\
  \end{cases}
\end{equation*}
satisfies $\psi' = \eta$, and $\int_0^1\psi = 0$ if $\Sigma_{\mt{p}}=\emptyset$ and $\psi(x) = 0$ for $x\in\Sigma_{\mt{p}}$ otherwise. Let $v\in V$, write $\psi$ a primitive of $v$ and compute, for all $\varphi\in V$,
\begin{equation*}
  \int_0^1 v\varphi = \int_0^1 \psi' \varphi = 0,
\end{equation*}
so that $v=0$. This shows that $V = \{0\}$, i.e., $V$ contains only the zero function on $(0,1)$. Trivially then, the zero function is the unique global minimizer of the energy $\E$ in \eqref{eq:energy} and $M_\E$ is a singleton. Note that in this case the energy is only trivially convex.
\end{proof}

\section{Numerical approximation}\label{sec:numerical_approx}

In this part we present the numerical approximation for the considered problem when $d=1$; we will
still keep boldfaced notation for vectors and vector-valued functions for coherence with most of
the previous sections. In particular, in Section \ref{subsec:interface_algorithm} the algorithm to track the interface is
described. Later in Section \ref{subsec:discretization_space} we describe the
discretization adopted for a known interface. As mentioned in the introduction, for the implementation we have used
the flexible framework of the PorePy library; see \cite{Keilegavlen2020}.

We introduce a mesh $\Omega_h$ composed of non-overlapping segments
$E\in\Omega_h$ that approximate $\Omega$; we clearly have $\overline{\Omega}_h
= \cup_{E\in\Omega_h} \overline{E}$.  At the discrete level we can define the approximate
configuration $\mathcal{C}_h$ which is given by the set $\mathcal{C}_h =
\{\Omega_{1, h}, \Omega_{2, h}, \Gamma_h\}$, where respectively $\Omega_{1, h}
\subset \Omega_h$ and $\Omega_{2, h} \subset \Omega_h$ are the approximations of
the domains $\Omega_1$ and $\Omega_2$.  $\Gamma_h$ is the approximation of the
interface $\Gamma$. Note that for simplicity we are dropping the dependence of these
regions on the velocity field. For each element $E$ we name $e_1$ and $e_2$ its two
extremal vertices and $h_E$ its length. We let $h=\max_{E \in \Omega_h} h_E$ be the
mesh size. When more than one fracture is present, each intersection is respected by
the grid. We indicate by $(\bu_h, p_h)$ the approximation of $(\bu, p)$ for a
given mesh.

\subsection{Interface tracking algorithm}\label{subsec:interface_algorithm}

We suppose that the equations are discretized with a numerical scheme that gives an
accurate enough velocity field.
We consider an iterative scheme such that for each step $i$ a tentative
configuration $\mathcal{C}_h^{(i)}$ approximates $\mathcal{C}_h$.  For a given 
configuration $\mathcal{C}_h^{(i-1)}$, the proposed algorithm solves the differential 
problem obtaining a new velocity field $\bu^{(i)}$. To speed up the computation, we
evaluate condition \eqref{eq:law-jump} only at the extremities of each grid 
element $E$. If we obtain opposite values, we can thus determine the position of an interface in
the considered element up to a given tolerance $\epsilon_\Gamma$. This part can be coded
with an embarrassingly parallel workload, speeding up the algorithm. Having checked all 
the elements and computed the new configuration $\mathcal{C}_h^{(i)}$, the algorithm
resets with the new configuration as starting point. The exit strategy considers 
the position of $\Gamma_h$ for two successive iterations: if their distance is
smaller than a given threshold $\epsilon_\Omega$, then a stable configuration is 
reached and the algorithm ends. We summarize in Algorithm \ref{algo:interface} the implemented scheme.

\begin{algorithm}[htb]
    \SetAlgoLined
    \SetKwData{sol}{$(\bu^{(i)}, p^{(i)})$}
    \SetKwData{dom}{$\mathcal{C}_h^{(i-1)}$}
    \SetKwData{domi}{$\mathcal{C}_h^{(i)}$}
    \SetKwFunction{PDE}{PDE}\SetKwFunction{Interface}{Interface}
    \SetKwFunction{dist}{Distance}
    \KwData{For $i=1$: the configuration \dom
    and the tolerances $(\epsilon_\Gamma, \epsilon_\Omega)$\;}
    \KwResult{For $i={\rm end}$: the configuration \domi\;}
    \Do{$d > \epsilon_\Omega$}
    {
        \sol $\leftarrow$ \PDE{\dom}\;
        \For{$E=(e_1,e_2) \in \Omega_h^{(i-1)}$}
        {
            $(c_1, c_2)$ $\leftarrow$ $(\Vert \bu^{(i)}(e_1) \Vert < 1, \Vert
            \bu^{(i)}(e_2) \Vert < 1)$\;
            \If{$c_1 \neq c_2$}
            {
                \domi in $E$ $\leftarrow$ \Interface{$\bu^{(i)}$}\;
            }
        }
        $d$ $\leftarrow$ \dist{$\Gamma_h^{(i)}$, $\Gamma_h^{(i-1)}$}\;
        $i$ $\leftarrow$ $i+1$\;
    }
    \caption{Interface tracking algorithm.}
    \label{algo:interface}
\end{algorithm}
We see that with this algorithm we cannot locate multiple interfaces in a single element $E$; this is a 
direct implication of the fact that $\Omega_{1, h}^{(i)}$ or $\Omega_{2, h}^{(i)}$ would be smaller
than the mesh size in this case. The algorithm is also unable to track
interfaces which are full-dimensional (in this case of space dimension $1$) since we choose
to place only one point as the interface within an element whose vertices have velocities
around the threshold speed. 

To avoid unnecessary loops due to the chosen accuracy,
numerical experiments showed that a good value of $\epsilon_\Omega$ is
comparable with the mesh size given at the outset of the problem. Also, the
value of $\epsilon_\Gamma$ is set quite small to determine the interfaces with
high precision. Numerical examples when the interface inverse permeability jump 
is non-positive indicated convergence of the proposed scheme, independently 
from the starting configuration, thus suggesting that the 
problem has a unique solution and that, in line with Remark \ref{rem:uniqueness-interface},
the interface should not be full-dimensional. 

\subsection{Discretization in space}\label{subsec:discretization_space}

Following the algorithm discussed before, we assume that a configuration
$\mathcal{C}_h^{(i)}$ is given.  In the case of non-linear constitutive
relations $\bLambda$ an iterative scheme can be used, e.g., Picard or Newton or
L-scheme. The solution is computed up to a tolerance $\epsilon_{\texttt{nl}}$.
In our implementation we have considered a Picard iteration scheme.
For this, to discuss the numerical approximation, we assume thus a linear constitutive relation
$\bLambda$.

Following \cite{Formaggia2012}, to solve the problem we consider the mixed
finite element approximation of lowest-order degree.  For a given mesh
$\Omega_h$ we have $(\bu_h, p_h)\in
(\mathbb{P}_0(\Omega_h), \mathbb{RT}_0(\Omega_h))$, where the first is the space
of constant piecewise polynomial and the second the Raviart--Thomas space; see
\cite{Raviart1977,Roberts1991}. This pair of discrete spaces is
stable and gives a good approximation for the velocity field, essential for our
purposes. The resulting discrete
problem is well-posed, see also \cite{Formaggia2012,Boffi2013}, and the discrete solution
converges to the exact one as $h$ goes to zero.

\section{Numerical examples}\label{sec:examples}

In this part, we present numerical evidence for the quality and effectiveness of the
previously introduced framework. Particularly for increasing geometrical and physical
complexity. We consider three cases, starting from a single fracture in
Section \ref{subsec:examples:case1}, two crossing fractures in Section
\ref{subsec:examples:case2}, and concluding with the fracture network of
Benchmark 1 from \cite{Flemisch2016a} in Section \ref{subsec:examples:case3}.
In all cases, linear and non-linear
velocity-pressure relations are considered as well as the impact of a possible
vector source term $\bff$. The threshold on the velocity norm is set
as $\overline{u} = 0.15$, so that we have
\begin{gather*}
    \Omega_1 = \{ \bx \in \Omega\,|\, \norm{\bu(\bx)} < \overline{u} \}\quad
    \Omega_2 = \{ \bx \in \Omega\,|\, \norm{\bu(\bx)} > \overline{u} \}\quad
    \Gamma = \{ \bx \in \Omega\,|\, \norm{\bu(\bx)} = \overline{u} \}.
\end{gather*}
If not otherwise specified, we consider a mesh size equal to $h=5 \cdot 10^{-2}$
as well as $\epsilon_\Omega = h$; the maximum number of iterations to reach a stable
configuration with Algorithm \ref{algo:interface} is set to $50$; and to track the
interface we set $\epsilon_\Gamma = 10^{-10}$. In all the cases, the initial
configuration is chosen to be $\Omega_1^{(0)} = \Omega$, i.e., the whole domain
coincides with the low-speed region.

Our simulations, unless mentioned otherwise, are based on the following combinations of laws.
In the linear case, we consider a combination of classical Darcy laws between the velocity and
pressure, namely $\bK^{-1} \bu = -\bgrad p + \bff$ with $\bK = k \bI$, where $\bI$ is the identity matrix and
\begin{gather}
    k =
    \begin{dcases*}
        k_1 = 1 & in $\Omega_1$,\\
        k_2 = 10 & in $\Omega_2$.
    \end{dcases*}
\end{gather}
Specifically, we have
\begin{gather}\label{eq:ex_linear_case}
    \bLambda(\bu) =
    \begin{dcases*}
        \bu & in $\Omega_1$,\\
        0.1\bu & in $\Omega_2$.
    \end{dcases*}
\end{gather}
When the non-linear case is studied, a non-linear and heterogeneous relationship
between the velocity and the pressure is set. We assume a linear Darcy flow in 
$\Omega_1$ and a Darcy--Forchheimer flow in $\Omega_2$. Specifically, we have
\begin{gather}\label{eq:ex_non_linear_case}
    \bLambda(\bu) =
    \begin{dcases*}
        \bu & in $\Omega_1$,\\
        (0.01 + 3\norm{\bu}) \bu & in $\Omega_2$.
    \end{dcases*}
\end{gather}
If not specified, we consider 50 as maximum for the number of iterations 
of the non-linear solver with tolerance $\epsilon_{\texttt{nl}}$ equal to $10^{-4}$.

These examples were developed with the open source library PorePy \cite{Keilegavlen2020}.
The associated scripts are freely accessible. PorePy uses Gmsh \cite{Geuzaine2009} to construct
the grids.

\subsection{Single fracture network}\label{subsec:examples:case1}

In this first case, we consider a single fracture $\Omega=(0, 1)$. Boundary
conditions are set to zero for the pressure. In the sequel, we consider a
linear and non-linear law relationship between $\bu$ and $p$. The scalar and
vector source
terms are set equal to
\begin{gather*}
    q(\bx) =
    \begin{dcases*}
        1 & if $x_1 \leq 0.3$,\\
        -1 & if $0.3 < x_1 < 0.7$,\\
        1 & if $x_1 \geq 0.7$,
    \end{dcases*}
    \quad \text{and} \quad
    \bff = [5\cdot10^{-2}, 0, 0]^\top
    .
\end{gather*}

\subsubsection{Linear case}\label{subsubsec:examples:case1:linear}

In this part, we consider the linear case with $\bLambda$ as in \eqref{eq:ex_linear_case}.
The numerical solution is reported in Figure \ref{fig:ex1_linear} along with some snapshots of
the tentative solutions from Algorithm \ref{algo:interface}.
What the ``if $\Omega_1$'' legend represents is a binary outcome saying if the region
is $\Omega_1$ or not. The "condition" legend represents the configuration at the previous
algorithm iteration. The first figure gives the initial condition imposed, 
the second the configuration after 3 steps and the third the final
solution (at iteration 6). We notice the creation of multiple interfaces
$\Gamma$, which might change during the determination of the final configuration.
\begin{figure}[btp]
    \centering
    \includegraphics[width=1\textwidth]{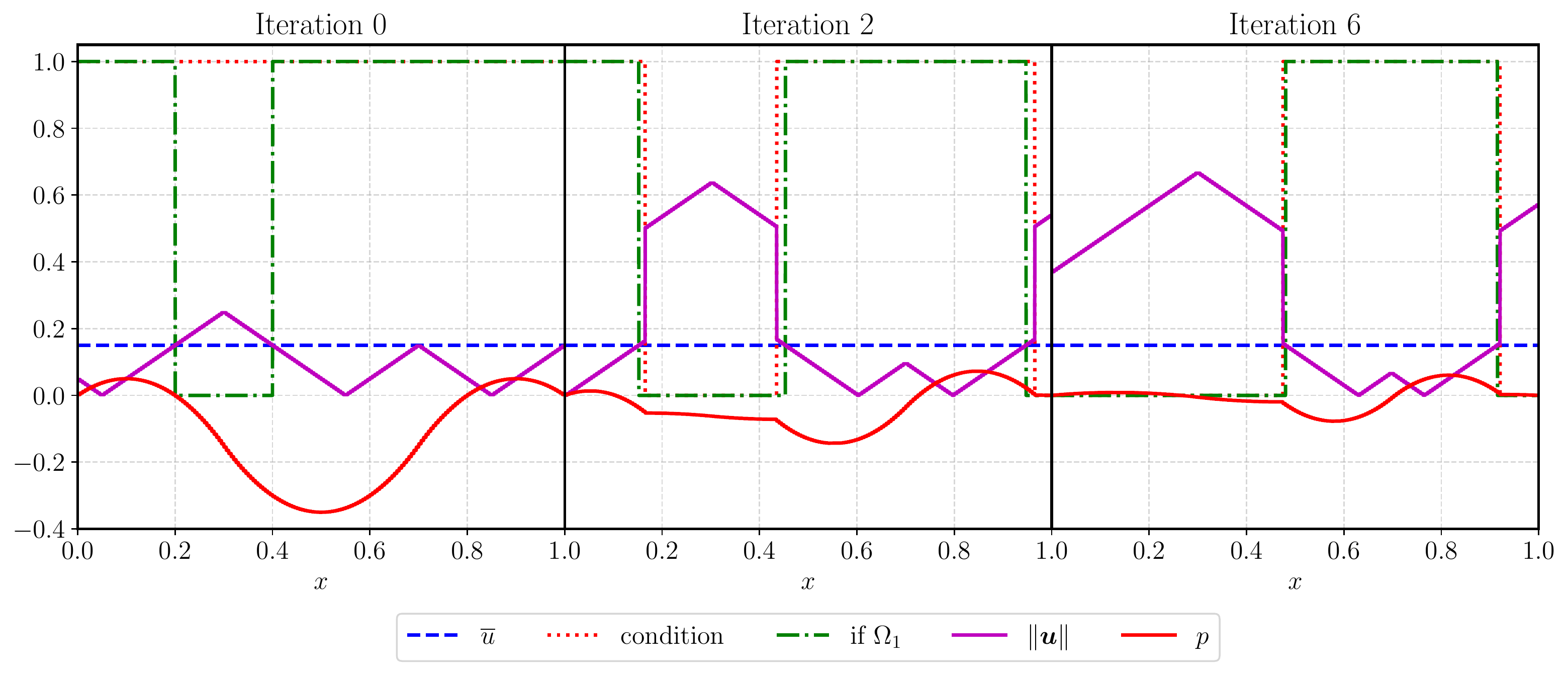}%
    \caption{Solution for different iterations for the problem of Section
    \ref{subsubsec:examples:case1:linear}. From the left at iteration 0, 2, and
    6. The green line represents if the
    portion of the fracture belongs to $\Omega_1$ or not. The pressure profile
    is amplified by a factor of 10.}%
    \label{fig:ex1_linear}
\end{figure}
By changing the initial condition we get to the same stationary solution, and
by refining the grid we obtain a stable outcome similar to the one presented in
Figure \ref{fig:ex1_linear}.

In Figures \ref{fig:ex1_linear_k2} and \ref{fig:ex1_linear_k2_two_states} we study
a case different from \eqref{eq:ex_linear_case}, where we test our algorithm
for various permeabilities $k_2$ while fixing $k_1 =1$. By increasing 
the maximum number of iterations to $1000$, Figure \ref{fig:ex1_linear_k2} on the
left shows the impact of changing $k_2$ on the number of iterations for
the Algorithm \ref{algo:interface}. We see that for $k_2 \geq k_1$ the solution
is always computable while we cannot draw the same conclusion for $k_2 < k_1$. 
In this latter case, the algorithm ``jumps'' between the two states and thus does not converge. 
An example, for $k_2=0.5625$ is shown in Figure \ref{fig:ex1_linear_k2_two_states}, 
where we notice that ``if $\Omega_1$'' and ``condition'' are perfectly flipped in any two
successive iterations.
\begin{figure}[btp]
    \centering
    \includegraphics[width=0.45\textwidth]{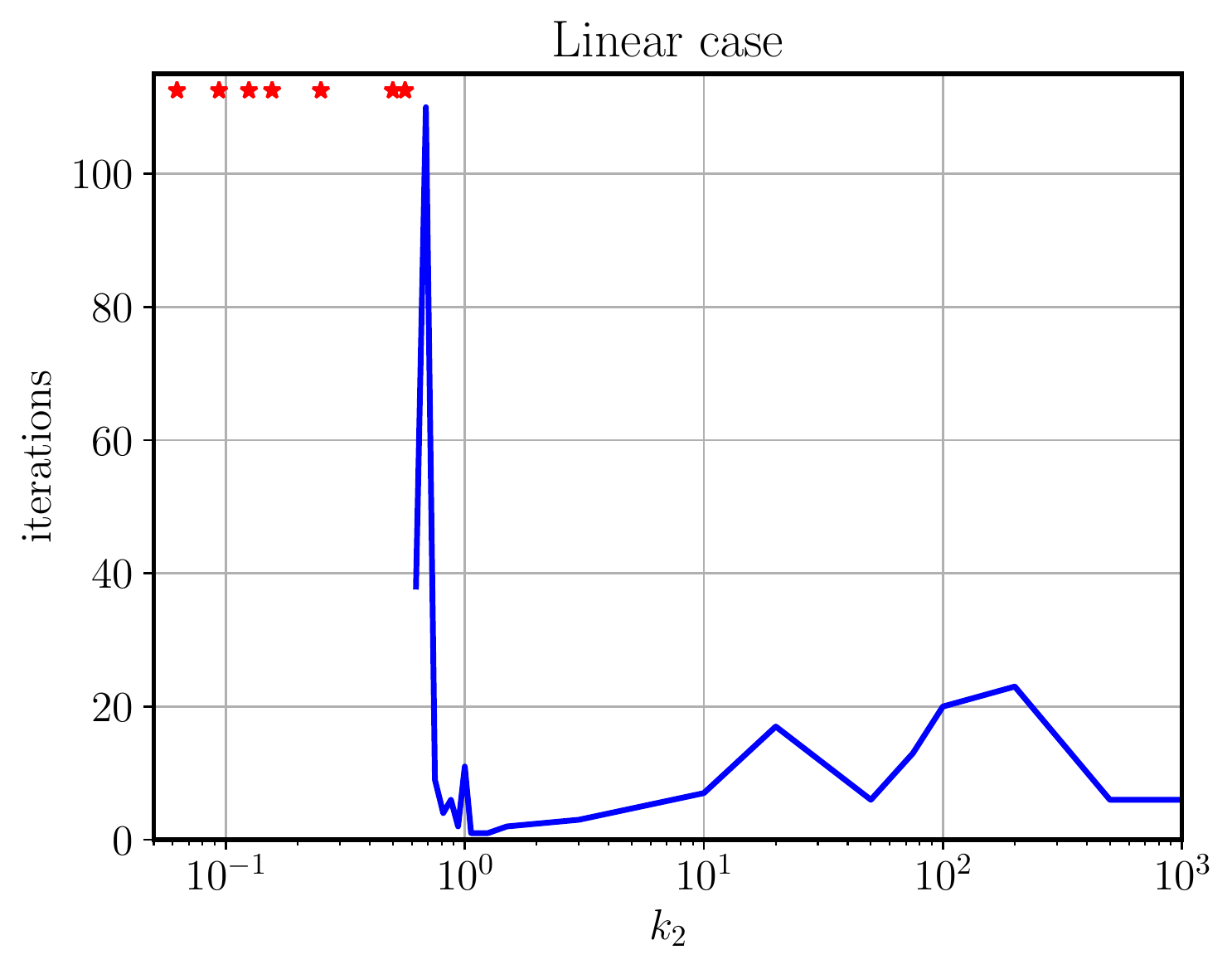}%
    \hspace*{0.1\textwidth}%
    \includegraphics[width=0.45\textwidth]{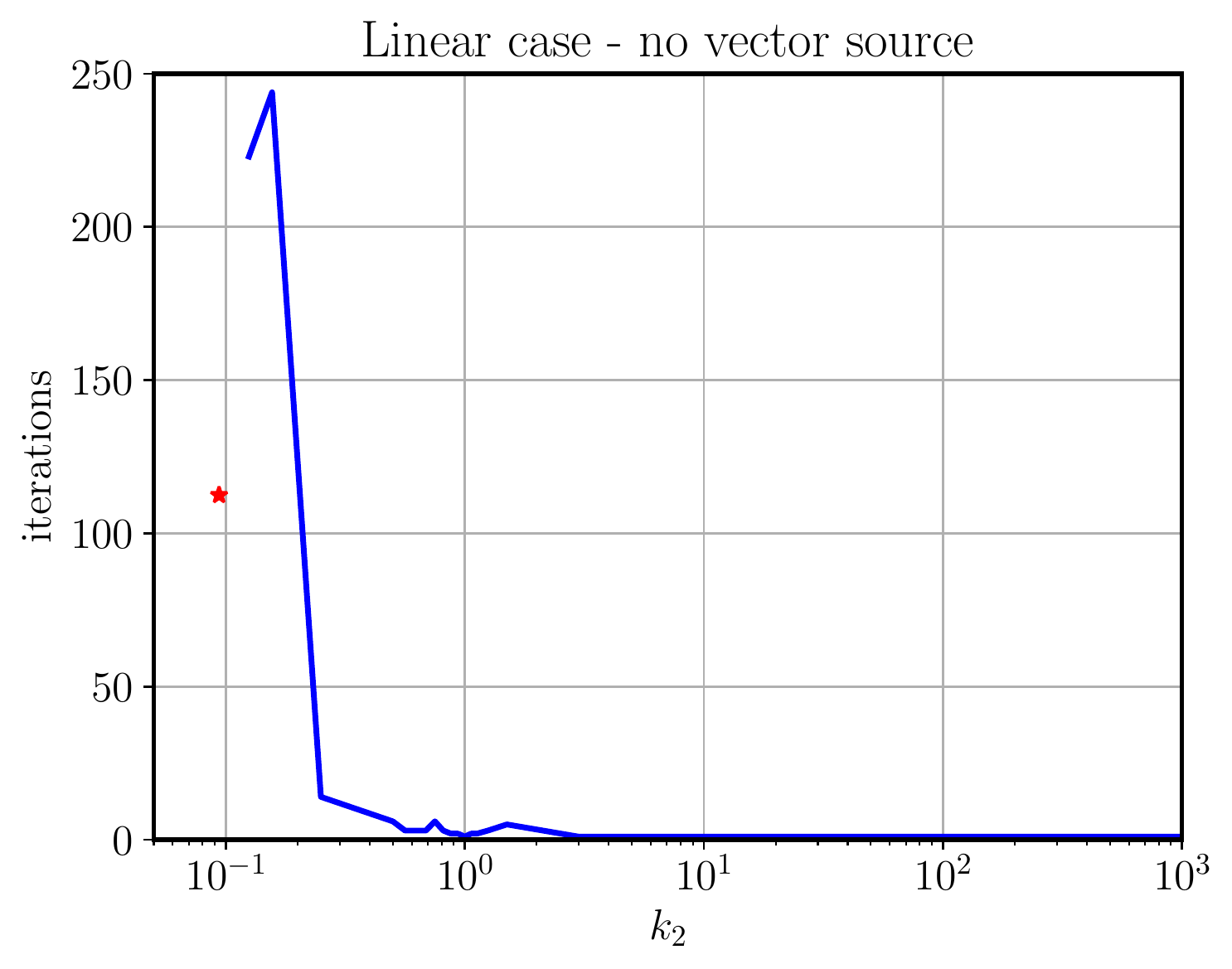}
    \caption{Number of iterations of Algorithm \ref{algo:interface} for different values of
    $k_2$ for the example in Section
    \ref{subsubsec:examples:case1:linear}. The red asterisks mean that
    the maximum number of iterations is reached.}%
    \label{fig:ex1_linear_k2}
\end{figure}

\begin{figure}[tbp]
    \centering
    \includegraphics[width=0.66\textwidth]{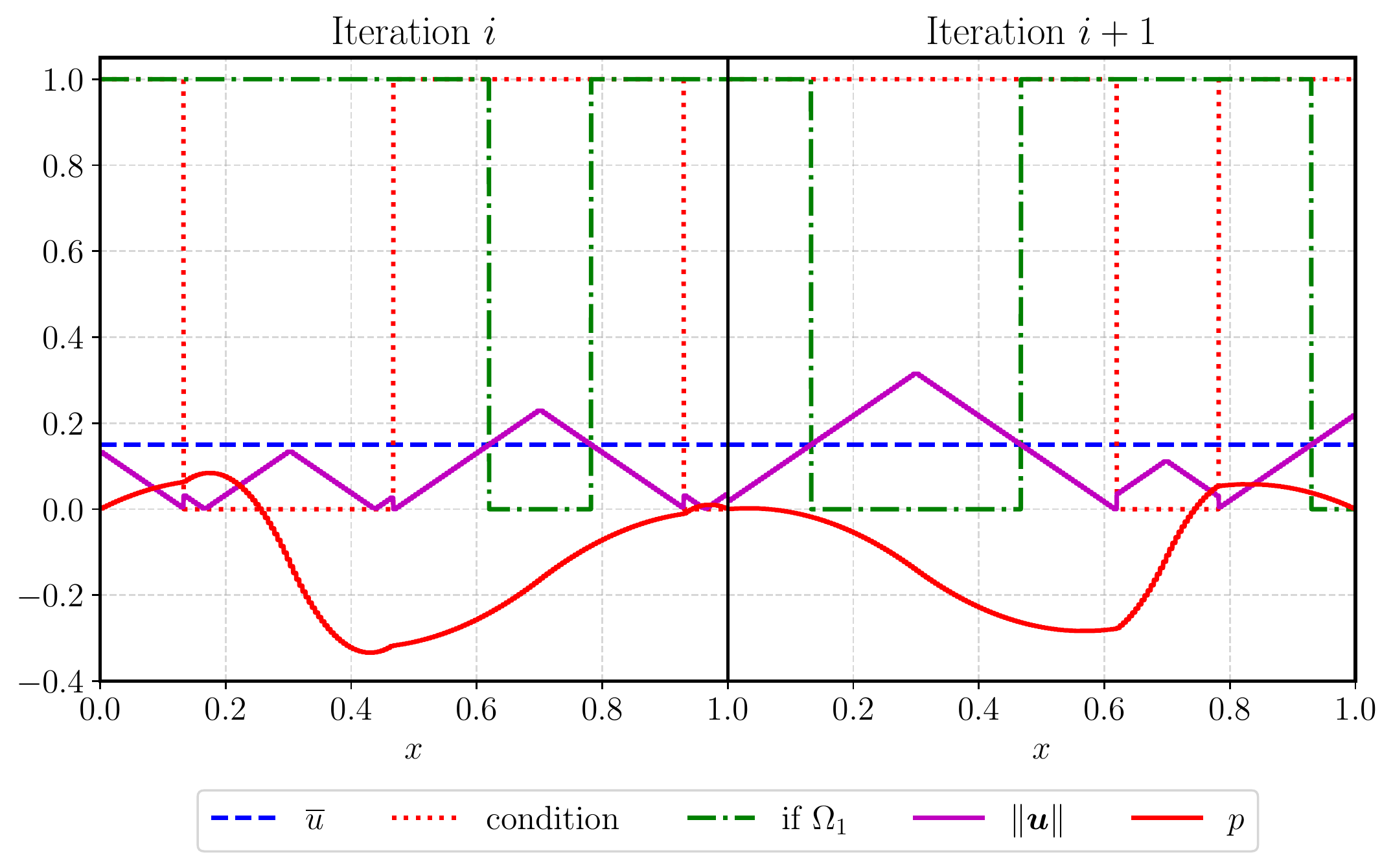}%
    \caption{The two solutions computed by the algorithm for $k_2=0.5625$; see Section \ref{subsubsec:examples:case1:linear}.}
    \label{fig:ex1_linear_k2_two_states}
\end{figure}

By setting $\bff = \bnull$ and $p(1)=0.2$, the latter to avoid that the algorithm converges
in 1 iteration for each value of $k_2$, we can do the same analysis and obtain
the plot in Figure \ref{fig:ex1_linear_k2} on the right. We deduce that
the presence of the vector source has an impact on the computability of
the solution when $k_2>k_1$, that is, when the interface permeability jump
is positive.

We can conclude that, even in this simple setting, the obtained numerical
evidence is interesting and gives a valid support to the developed
theory, at least when $k_2\geq k_1$.

\subsubsection{Non-linear case}\label{subsubsec:examples:case1:non_linear}

In this part, we consider the non-linear case with $\bLambda$ as in \eqref{eq:ex_non_linear_case}.
Figure \ref{fig:ex1_non_linear} shows the numerical solution
obtained at different iteration steps of Algorithm \ref{algo:interface}.
\begin{figure}[btp]
    \centering
    \includegraphics[width=0.66\textwidth]{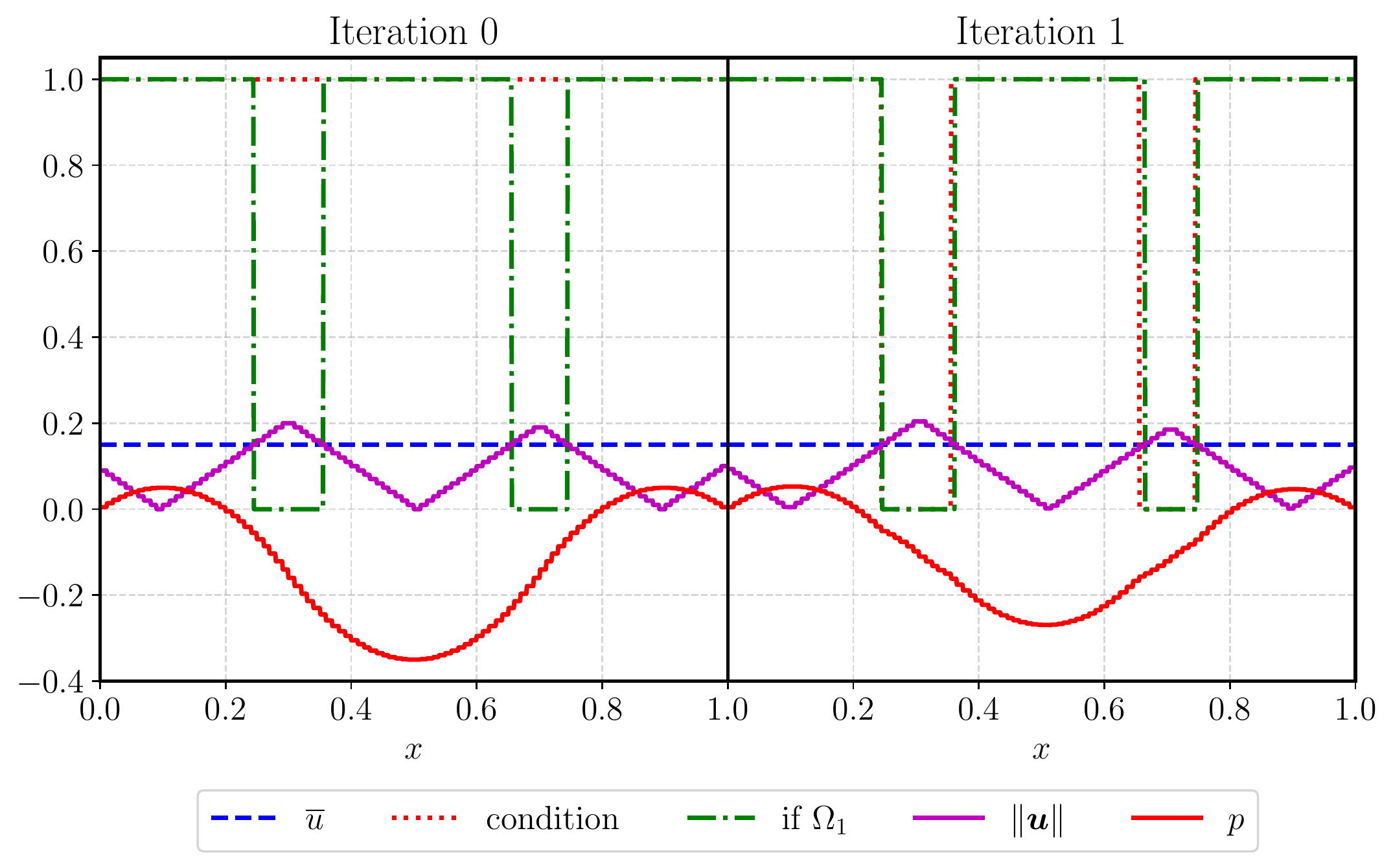}%
    \caption{Solution for different iterations for the problem of Section
    \ref{subsubsec:examples:case1:non_linear}. From the left at iteration 0, 1. The green line represents if the
    portion of the fracture belongs to $\Omega_1$ or not. The pressure profile
    is amplified by a factor of 100.}%
    \label{fig:ex1_non_linear}
\end{figure}
The stable solution is reached very quickly and only two iterations of the outer scheme are needed. We see the effect on the pressure of the non-linear law, which changes shape between the initial configuration and iteration 1.
By changing the parameters, it is possible to show that the
obtained solution is independent from the initial
condition and stable with respect the grid refinement, once the mesh size is
small enough to separate close interfaces. In this case 2 iterations are needed
to reach the stable solution, the first requires only 1 iteration and the second 6.
\begin{table}[tbp]
    \centering
    \begin{tabular}{ |c|c|c|c|c| }
        \hline
        $\epsilon_{\texttt{nl}}$ & \texttt{it\textsubscript{out}} &
        \texttt{it\textsubscript{in}} &
        \texttt{err\textsubscript{p}} &
        \texttt{err\textsubscript{u}}
        \\ \hline
        $10^{-1}$ &  2 & 3  & $1.2\cdot10^{-5}$ & $8.7\cdot10^{-6}$\\ \hline
        $10^{-2}$ &  2  & 4  & $1.8\cdot10^{-7}$ & $1.3\cdot10^{-7}$ \\ \hline
        $10^{-3}$ &  2  & 5  & $2.8\cdot10^{-9}$ & $2.2\cdot10^{-9}$ \\ \hline
        $10^{-4}$ &  2  & 6 & $4.4\cdot10^{-11}$ & $2.1\cdot10^{-11}$ \\ \hline
        $10^{-5}$ &  2  & 7 & $2.4\cdot10^{-12}$ & $1.0\cdot10^{-11}$ \\ \hline
        $10^{-8}$ & 2 & 11 & $0$ & $0$ \\
        \hline
        $10^{-12}$ & 2 & 15 & $-$ & $-$ \\ \hline
    \end{tabular}
    \caption{Numbers of iterations and errors computed for the example in
    Section \ref{subsubsec:examples:case1:non_linear}.}
    \label{tab:ex1_non_linear}
\end{table}

We consider now the effect of the tolerance imposed in the non-linear solver
$\epsilon_{\texttt{nl}}$, in
particular its effect on the number of iterations and resulting error. By
keeping fixed the spatial discretization, we compute a reference solution
$(\bu_{\rm ref}, p_{\rm ref})$ with
tolerance $\epsilon_{\texttt{nl}} = 10^{-12}$.
We report in Table \ref{tab:ex1_non_linear} the comparison with higher
tolerances. In all cases the first iteration requires only two non-linear cycles, since the initial configuration has only a linear problem.
At the second iteration the non-linear steps depend on the chosen tolerance, this value is reported in the table.
In particular, we notice that both errors
\texttt{err\textsubscript{p}} for $p$ and \texttt{err\textsubscript{u}} for
$\bu$ computed as
\begin{gather*}
    \text{\texttt{err\textsubscript{p}}} = \frac{\norm{p_{\rm ref} -
    p}}{\norm{p_{\rm ref}}}
    \quad \text{and} \quad
    \text{\texttt{err\textsubscript{u}}} = \frac{\norm{\bu_{\rm ref} -
    \bu}}{\norm{\bu_{\rm ref}}}
\end{gather*}
have a monotone decay. The norms in the previous expression are the Euclidean norms
of the solution vector.
The error is rather small and decays very quickly, reaching zero for the non-linear tolerance equal to $10^{-3}$. The outer iterations are not influenced by this parameters, probably due to the small errors obtained in all the cases.

Also in this case we can conclude that, even in this simple setting, the obtained numerical
evidence is interesting and gives a valid support to the developed
theory.

\subsection{Crossing fractures}\label{subsec:examples:case2}

In this second case, we consider the domain made of two crossing fractures. We
set
$\Omega = \Omega_{\rm horiz} \cup \Omega_{\rm vert}$ where $\Omega_{\rm horiz} =
(0, 1) \times \{0.5\} $ and $\Omega_{\rm vert} =  \{0.5\} \times (0, 1)$; both $\Omega_{\rm horiz}$
and $\Omega_{\rm vert}$ are identifiable with $(0,1)$. We consider a zero vector source
term $\bff$ and a scalar source term given on $\Omega_{\rm horiz}$ and $\Omega_{\rm
vert}$ respectively by
\begin{gather*}
    q_{\rm horiz}(\bx) =
    \begin{dcases*}
        1 & if $x_1 \leq 0.3$\\
        -1 & if $0.3 < x_1 < 0.7$\\
        1 & if $x_1 \geq 0.7$
    \end{dcases*}
    \quad \text{and} \quad
    q_{\rm vert}(\bx) =
    \begin{dcases*}
        1 & if $x_2 \leq 0.3$\\
        -1 & if $0.3 < x_2 < 0.7$\\
        1 & if $x_2 \geq 0.7$
    \end{dcases*}.
\end{gather*}
On the boundary we set $p(0, 0.5) = 0$ and $p(0.5, 0) = p(1,0.5) = p(0.5, 1) = 0.1$.

\subsubsection{Linear case}\label{subsubsec:examples:case2:linear}

In this part, we consider the linear case with $\bLambda$ as in \eqref{eq:ex_linear_case}.
Figure \ref{fig:ex2_linear} shows the
graphical representation of the solution for both the horizontal and vertical
part of $\Omega$ for all the iterations of Algorithm \ref{algo:interface}.
\begin{figure}[btp]
    \centering
    \includegraphics[width=1\textwidth]{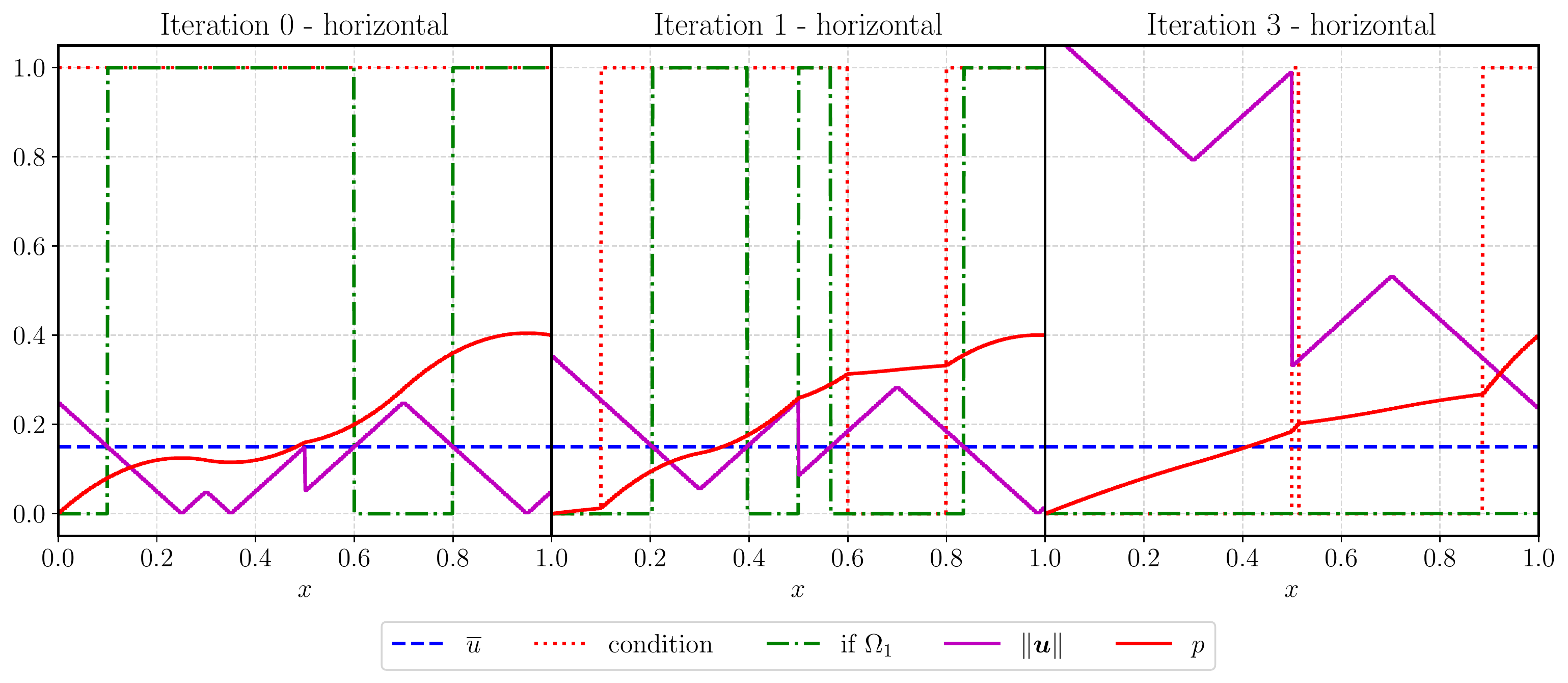}\\
    \includegraphics[width=1\textwidth]{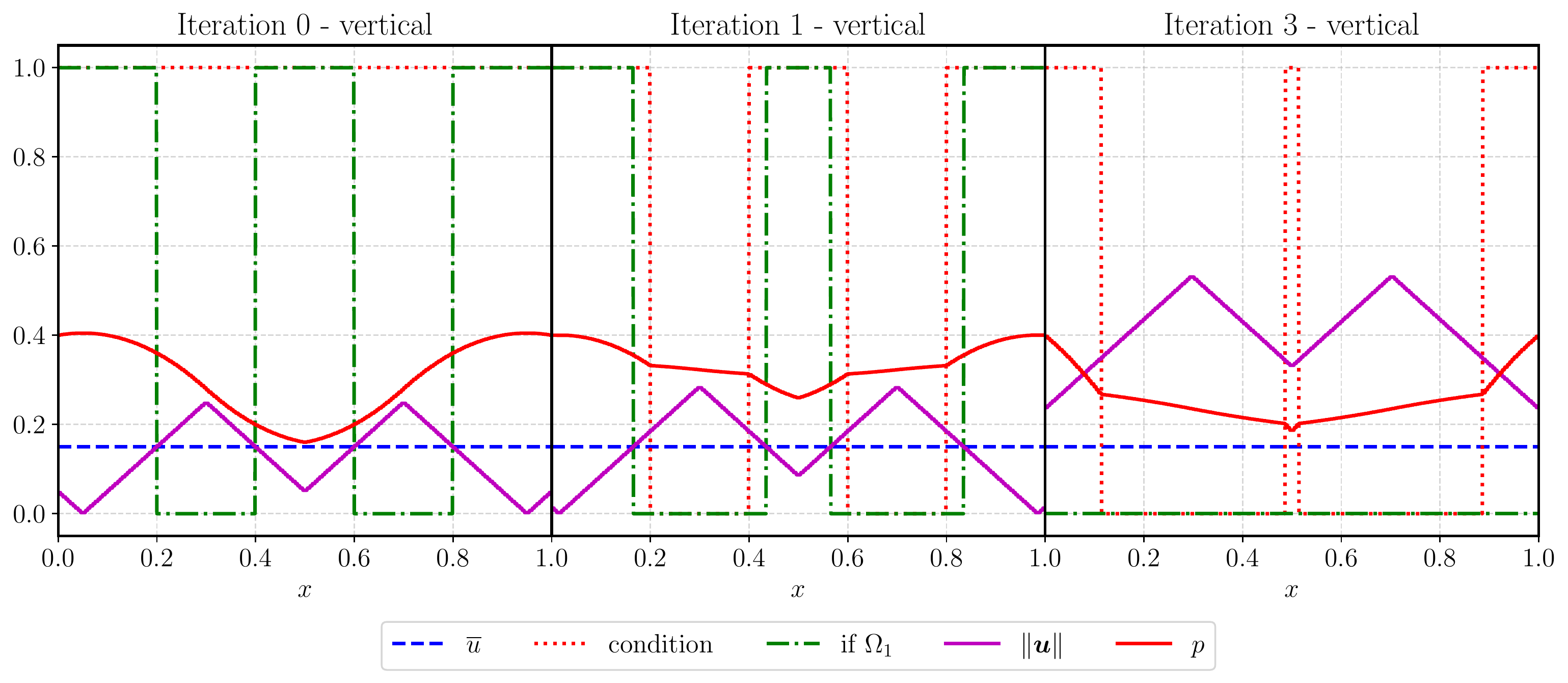}
    \caption{Solution for different iterations for the problem of Section
    \ref{subsubsec:examples:case2:linear}. The pressure profile
    is amplified by a factor of 4. On the top for the horizontal part of
    $\Omega$, while on the bottom for the vertical one.}%
    \label{fig:ex2_linear}
\end{figure}
We notice the influence of the crossing through a velocity jump in $\Omega_{\rm
horiz}$, while the pressure profile is continuous as condition \eqref{eq:network_cc} 
imposes. Also in this case, by changing the
initial condition we obtain the same final outcome, where all the domain becomes
$\Omega_{2,h}$.

Also with this more complex case, the obtained numerical
evidence is insightful and shows good properties for the developed approximation
framework which is apparently applicable to fracture networks.

\subsubsection{Non-linear case}\label{subsubsec:examples:case2:nonlinear}

In this part, we consider the non-linear case with $\bLambda$ as in
\eqref{eq:ex_non_linear_case}.
The obtained
numerical solution is reported in Figure \ref{fig:ex2_nonlinear}. The scheme
takes 5 iterations to converge with increasing number of non-linear solver iterations as $(1, 6, 8, 9)$.
\begin{figure}[btp]
    \centering
    \includegraphics[width=1\textwidth]{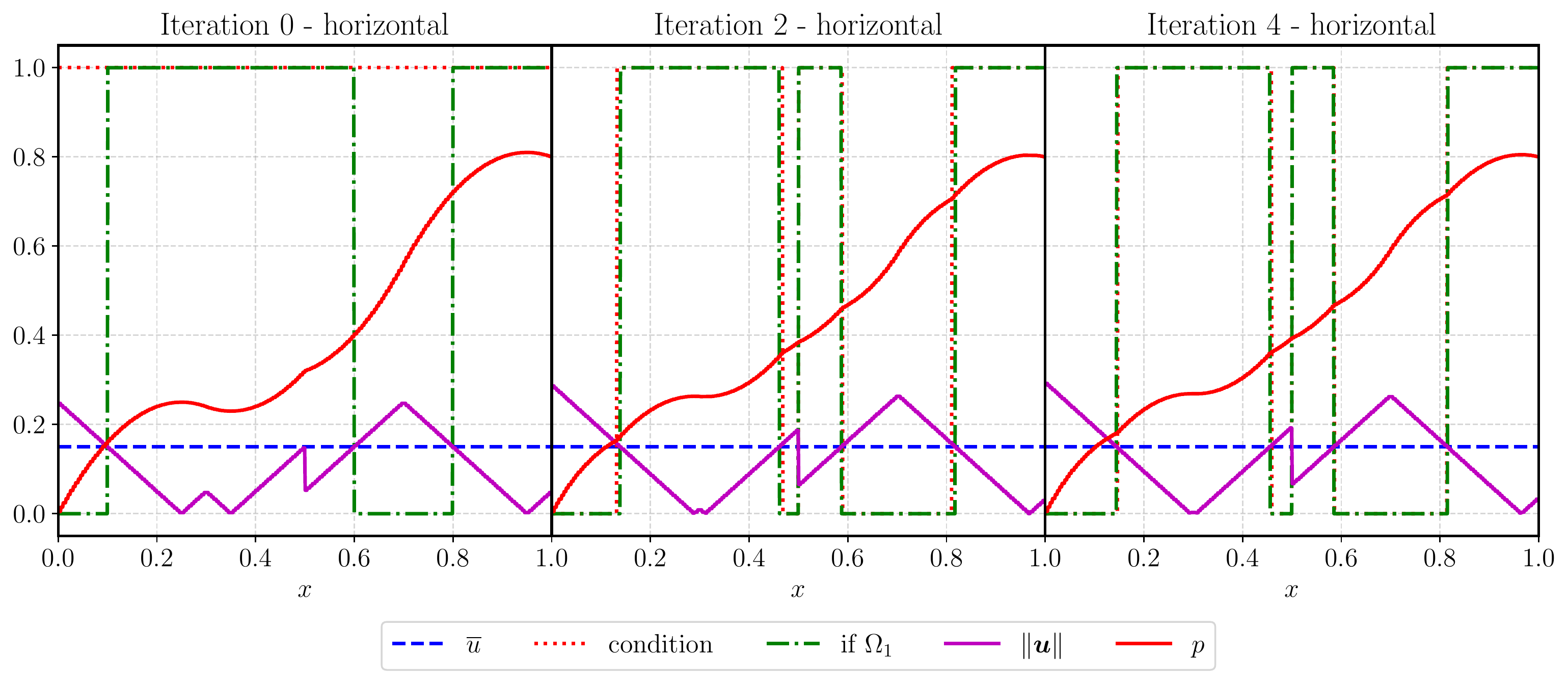}\\
    \includegraphics[width=1\textwidth]{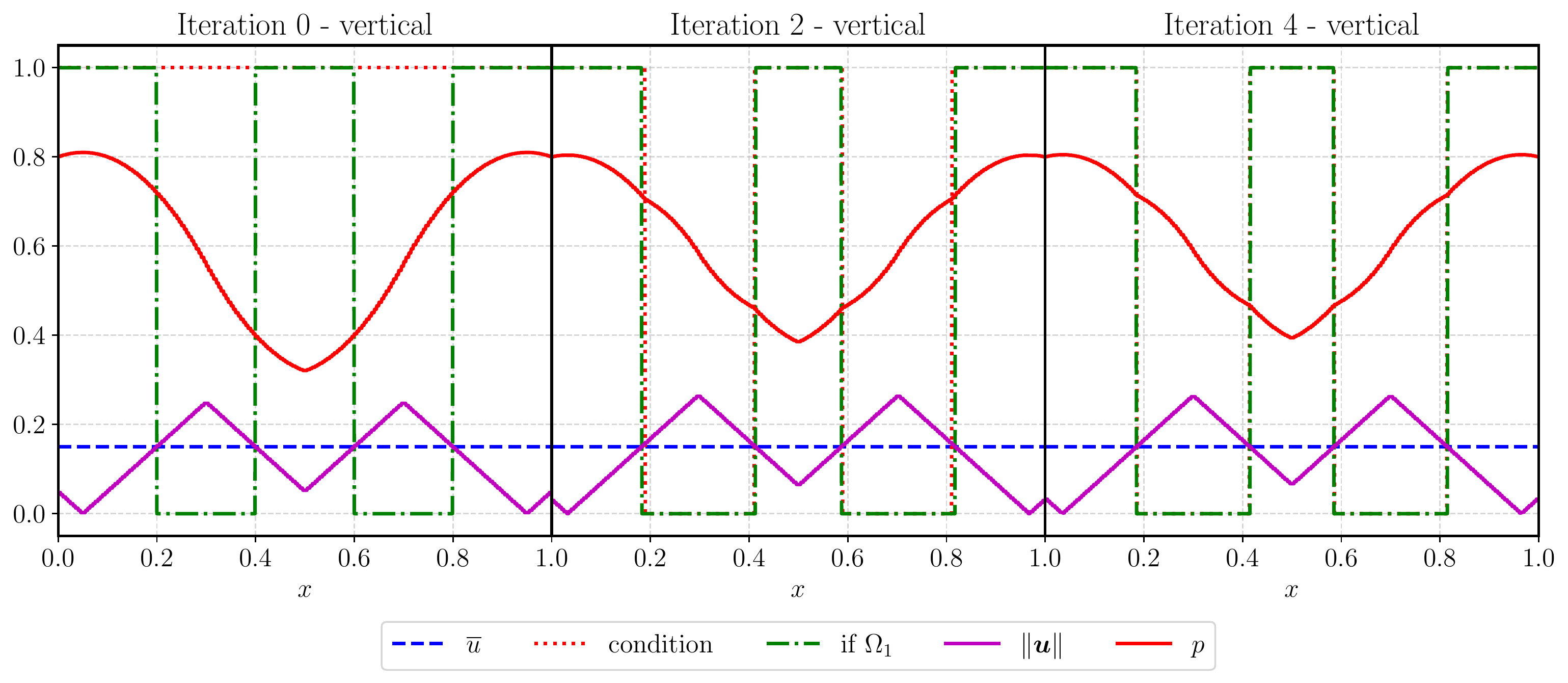}
    \caption{Solution for different iterations for the problem of Section
    \ref{subsubsec:examples:case2:nonlinear}. The pressure profile
    is amplified by a factor of 8. On the top for the horizontal part of
    $\Omega$, while on the bottom for the vertical one.}%
    \label{fig:ex2_nonlinear}
\end{figure}
The obtained solution shows that the high-speed model, being Darcy--Forchheimer, is more 
proper to describe most of the problem leaving the slow Darcian regime in the vicinity of the boundary where the non-zero pressure
condition is imposed. It is important to note that the plots show the norm of $\bu$ which presents a jump only in the horizontal fracture. Nevertheless, condition \eqref{eq:network_cc} is respected at the fracture intersection since a velocity jump is also present in the other fracture. The representation of only $\norm{\bu}$ hides this details.
By changing
the initial condition or refining the mesh, we obtain again the same final outcome.

We can conclude that also in presence of a non-linear and heterogeneous law the
proposed framework works properly on a network of fractures.

\subsection{Multiple fracture network}\label{subsec:examples:case3}

We finally consider a complex fracture network, with geometry taken from
Benchmark 1 of \cite{Flemisch2016a}. It is composed of 6 intersecting fractures
as Figure \ref{fig:ex3_domain} shows, along with the set of boundary conditions.
\begin{figure}[tbp]
    \centering
    \resizebox{0.3\textwidth}{!}{\fontsize{2.5cm}{2cm}\selectfont
\begingroup%
  \makeatletter%
  \providecommand\color[2][]{%
    \errmessage{(Inkscape) Color is used for the text in Inkscape, but the package 'color.sty' is not loaded}%
    \renewcommand\color[2][]{}%
  }%
  \providecommand\transparent[1]{%
    \errmessage{(Inkscape) Transparency is used (non-zero) for the text in Inkscape, but the package 'transparent.sty' is not loaded}%
    \renewcommand\transparent[1]{}%
  }%
  \providecommand\rotatebox[2]{#2}%
  \newcommand*\fsize{\dimexpr\f@size pt\relax}%
  \newcommand*\lineheight[1]{\fontsize{\fsize}{#1\fsize}\selectfont}%
  \ifx\svgwidth\undefined%
    \setlength{\unitlength}{282.82656097bp}%
    \ifx\svgscale\undefined%
      \relax%
    \else%
      \setlength{\unitlength}{\unitlength * \real{\svgscale}}%
    \fi%
  \else%
    \setlength{\unitlength}{\svgwidth}%
  \fi%
  \global\let\svgwidth\undefined%
  \global\let\svgscale\undefined%
  \makeatother%
  \begin{picture}(1,1)%
    \lineheight{1}%
    \setlength\tabcolsep{0pt}%
    \put(0,0){\includegraphics[width=\unitlength,page=1]{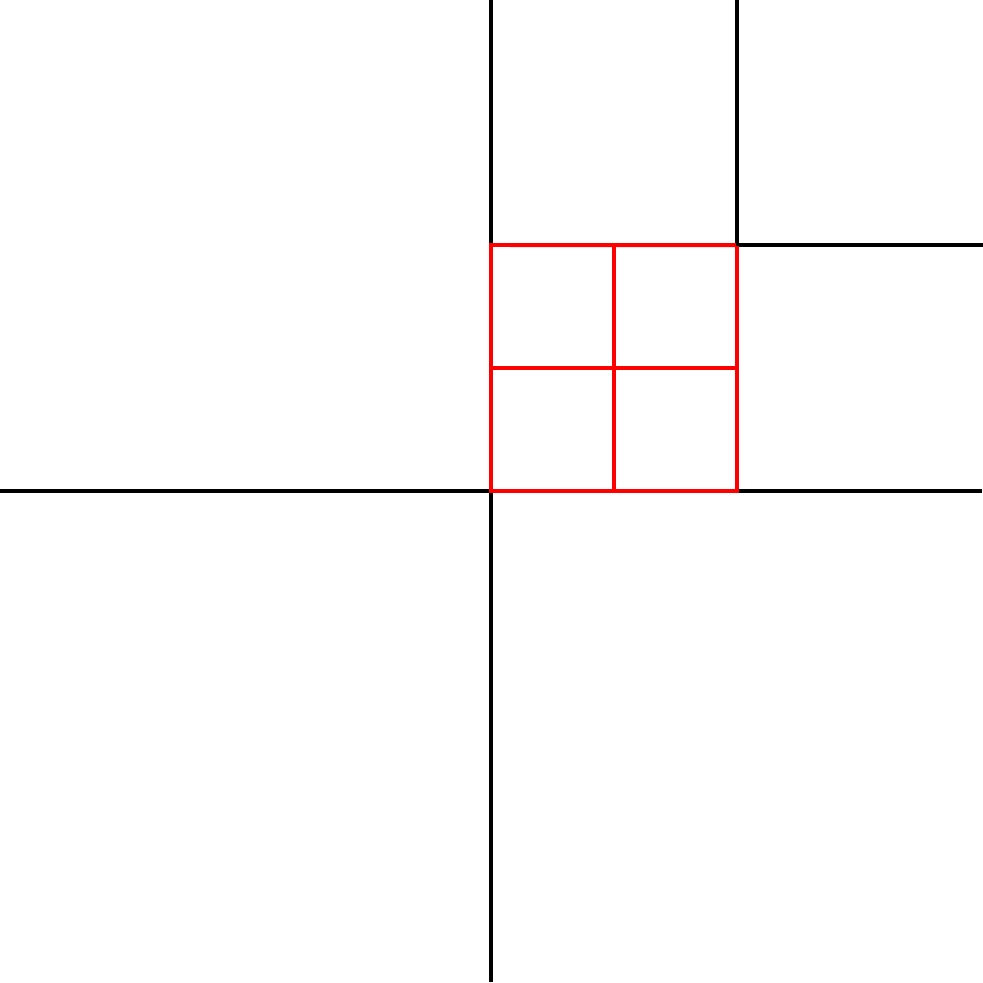}}%
    \put(0.60669763,0.40905868){\color[rgb]{1,0,0}\makebox(0,0)[lt]{\lineheight{1.25}\smash{\begin{tabular}[t]{l}$\Omega_{s}$\end{tabular}}}}%
    \put(0.52001825,0.01852142){\color[rgb]{0,0,0}\makebox(0,0)[lt]{\lineheight{1.25}\smash{\begin{tabular}[t]{l}$p=0$\end{tabular}}}}%
    \put(0.00847476,0.52280191){\color[rgb]{0,0,0}\makebox(0,0)[lt]{\lineheight{1.25}\smash{\begin{tabular}[t]{l}$p=0$\end{tabular}}}}%
    \put(0.93600384,0.52775225){\color[rgb]{0,0,0}\makebox(0,0)[lt]{\lineheight{1.25}\smash{\begin{tabular}[t]{l}$p=0.2$\end{tabular}}}}%
    \put(0.93600384,0.78584685){\color[rgb]{0,0,0}\makebox(0,0)[lt]{\lineheight{1.25}\smash{\begin{tabular}[t]{l}$p=0.2$\end{tabular}}}}%
    \put(0.76664569,0.96262232){\color[rgb]{0,0,0}\makebox(0,0)[lt]{\lineheight{1.25}\smash{\begin{tabular}[t]{l}$p=0.2$\end{tabular}}}}%
    \put(0.22321823,0.96262232){\color[rgb]{0,0,0}\makebox(0,0)[lt]{\lineheight{1.25}\smash{\begin{tabular}[t]{l}$p=0.2$\end{tabular}}}}%
  \end{picture}%
\endgroup%
}
    \caption{Representation of the fracture network for the examples in
    Section \ref{subsec:examples:case3}. We have reported the boundary conditions as well as the
    portion of the network with small branches in red $\Omega_s$.}%
    \label{fig:ex3_domain}
\end{figure}
We denote by $\Omega_s$ the set of smaller fracture branches, which will be useful 
in the following. Null and unitary vector and scalar sources are considered, respectively.
As done with the previous examples, we will consider the linear and non-linear case
in the next Sections.

\subsubsection{Linear case}\label{subsubsec:examples:case3:linear}

We consider here the same linear relations as in Section 
\ref{subsubsec:examples:case1:linear}; see \eqref{eq:ex_linear_case}. 
The obtained solution is represented in Figure \ref{fig:ex3_linear}. 
The scheme converges after 4 iterations of Algorithm \ref{algo:interface}.
We see an interesting result: $\Omega_{2,h}$, which is the high-velocity 
region, is automatically positioned on the main pathways between
the inflow and outflow parts of the network.
\begin{figure}[btp]
    \centering
    \includegraphics[width=0.3\textwidth]{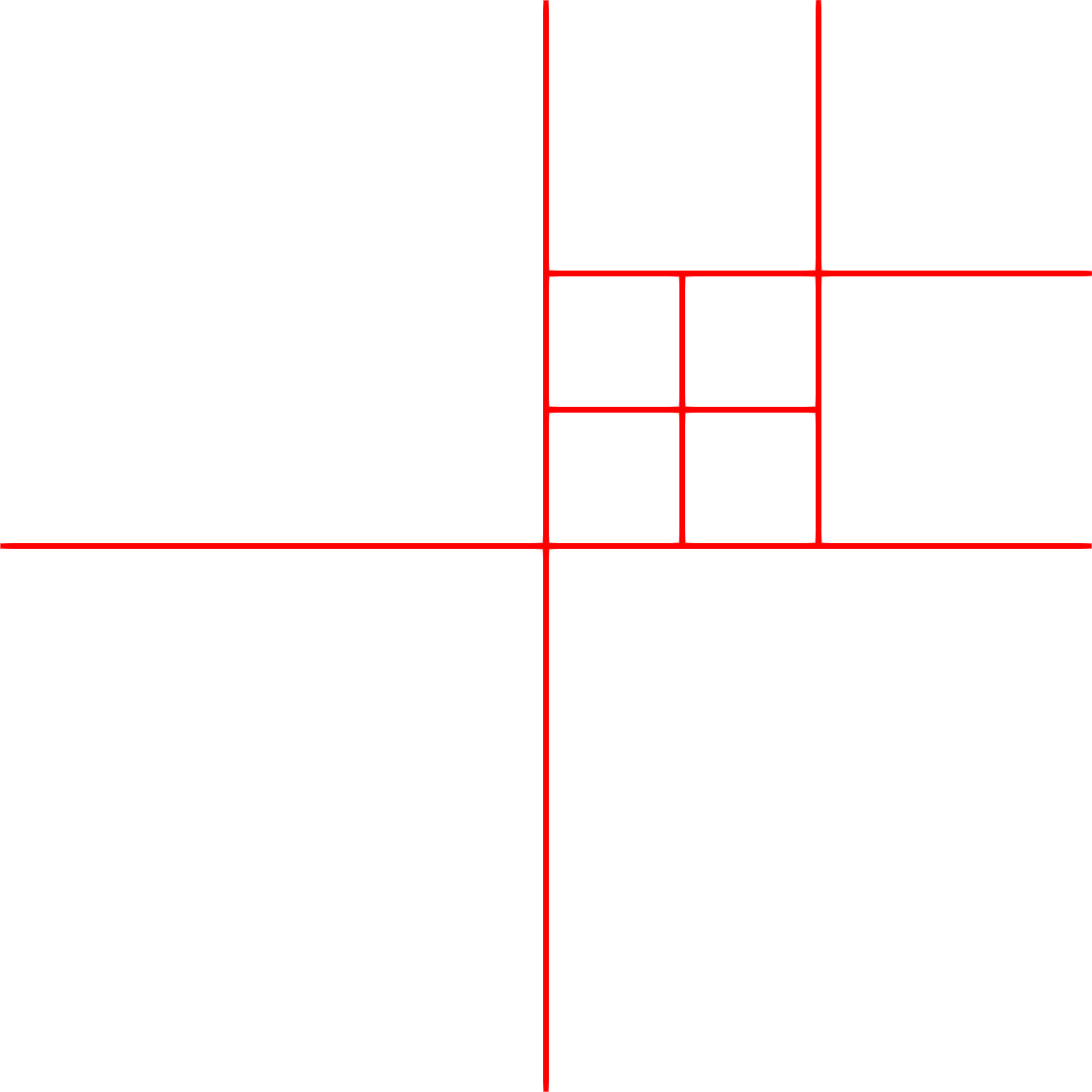}%
    \hspace*{0.05\textwidth}%
    \includegraphics[width=0.3\textwidth]{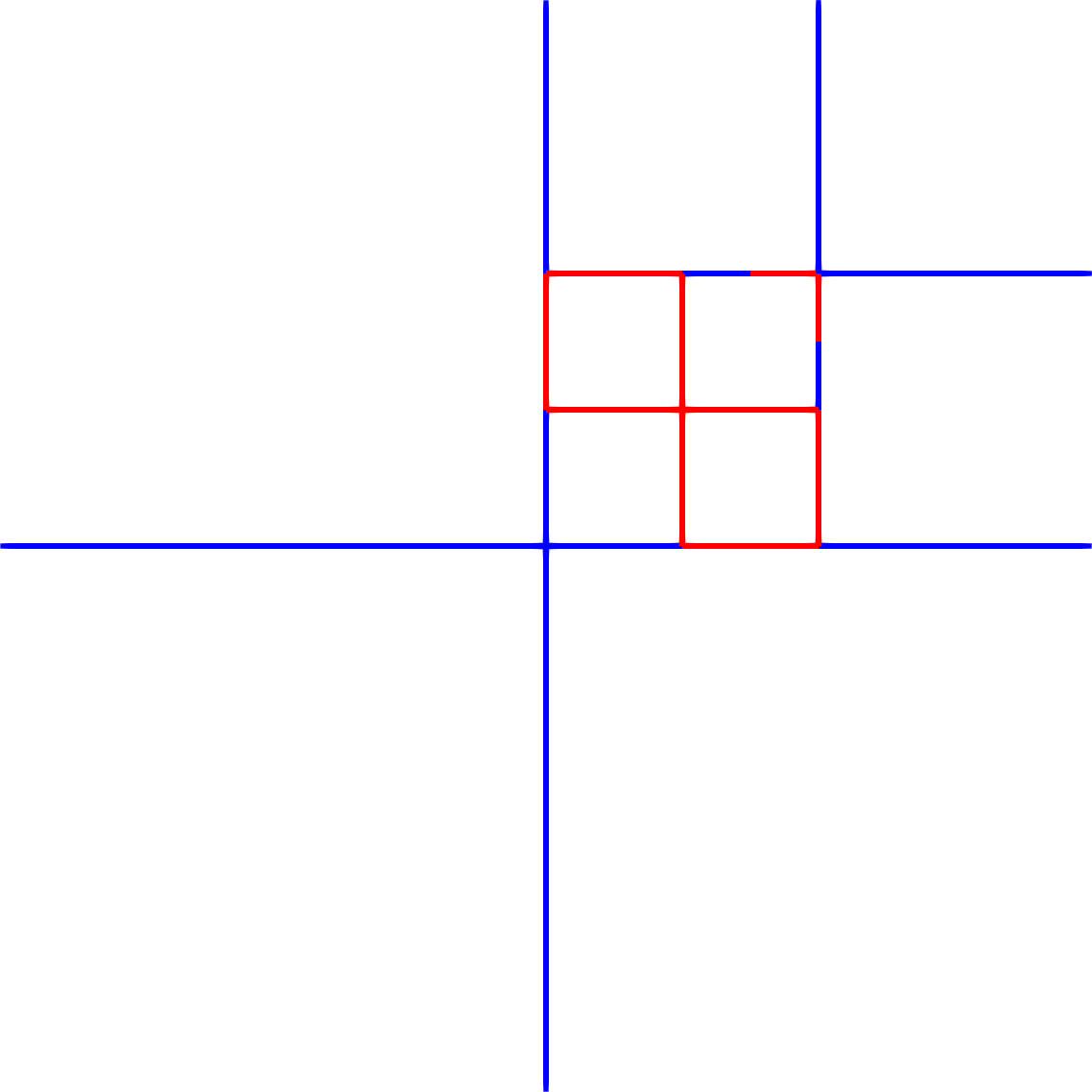}%
    \hspace*{0.05\textwidth}%
    \includegraphics[width=0.3\textwidth]{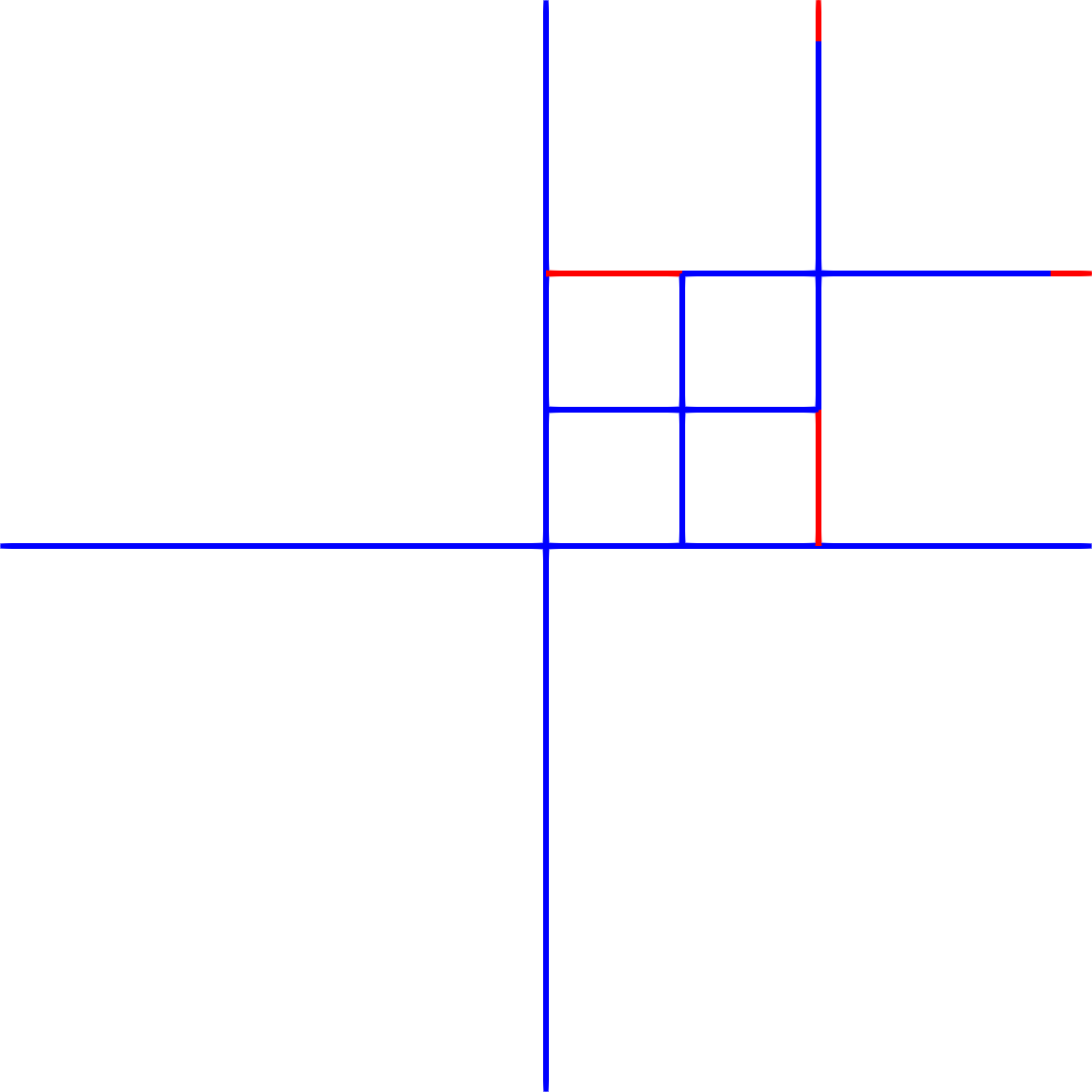}\\[0.05\textwidth]
    \includegraphics[width=0.3\textwidth]{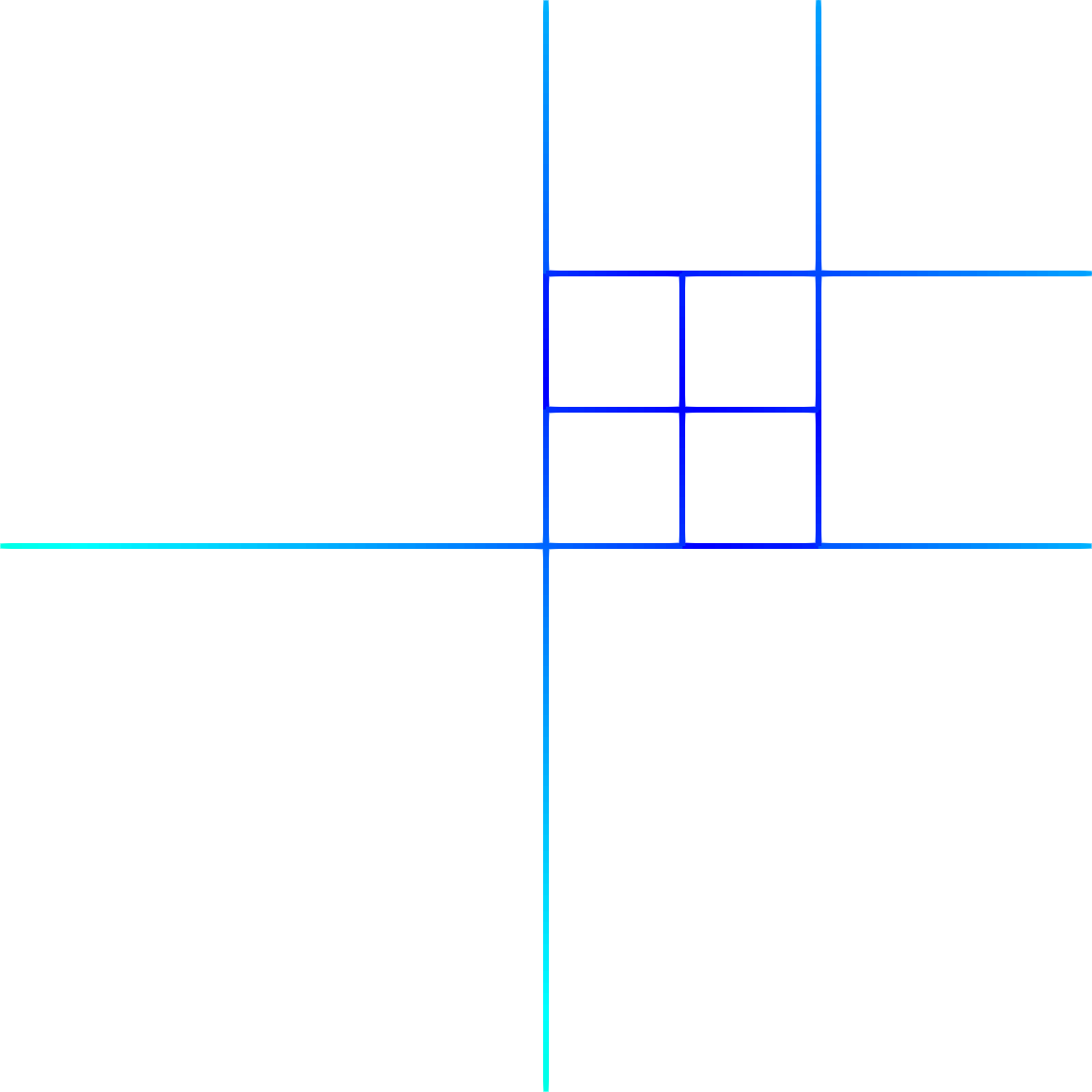}%
    \hspace*{0.05\textwidth}%
    \includegraphics[width=0.3\textwidth]{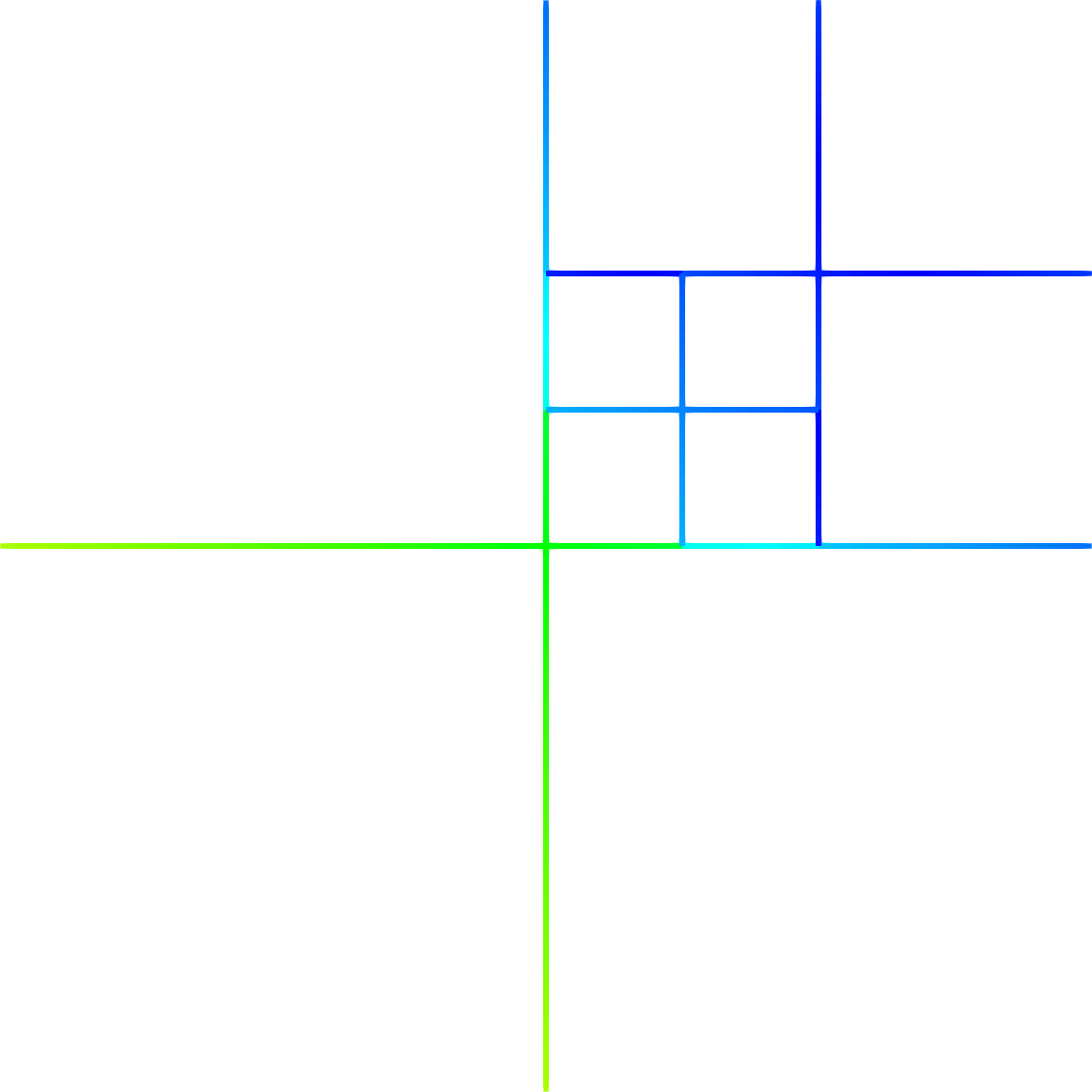}%
    \hspace*{0.05\textwidth}%
    \includegraphics[width=0.3\textwidth]{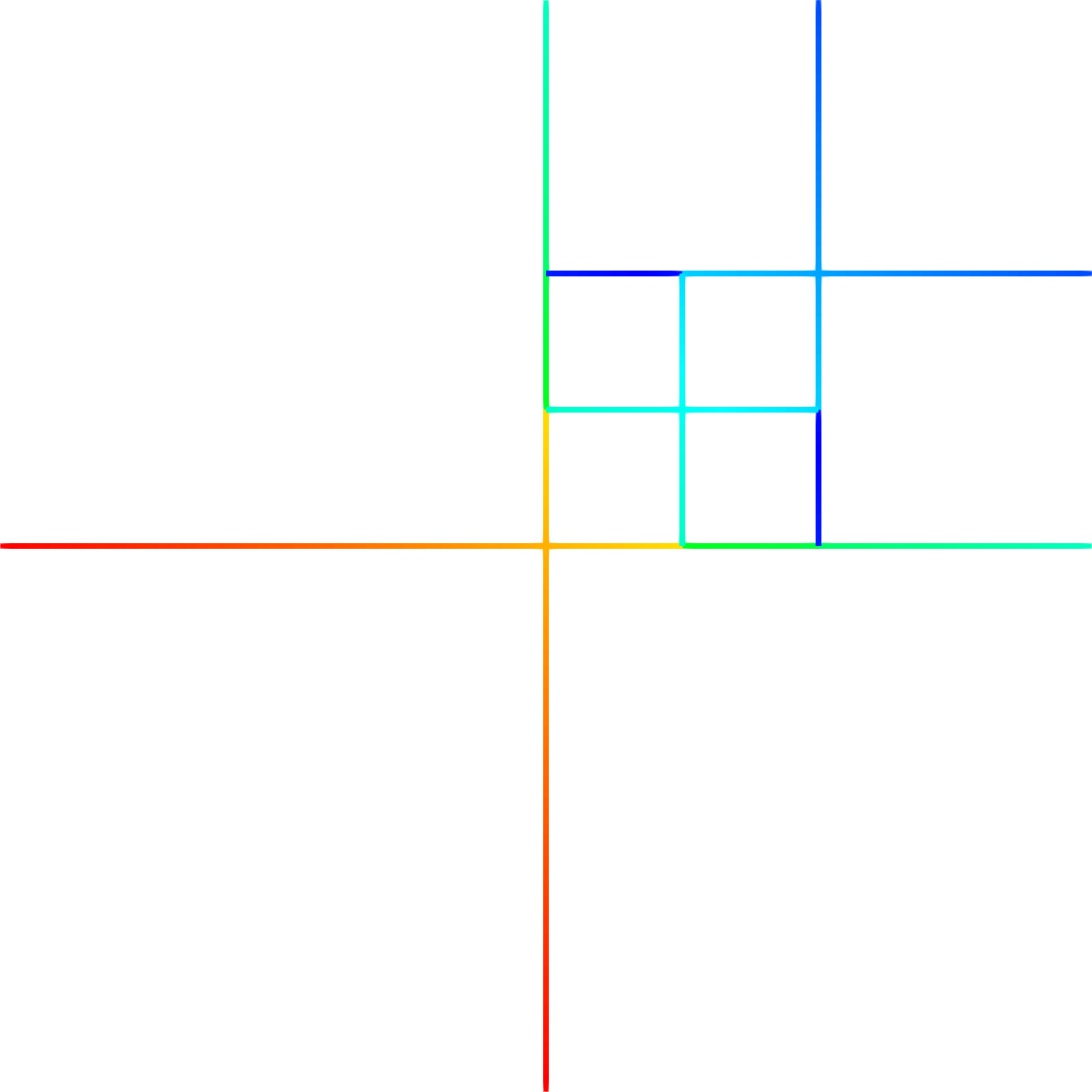}\\[0.025\textwidth]
    \includegraphics[width=0.3\textwidth]{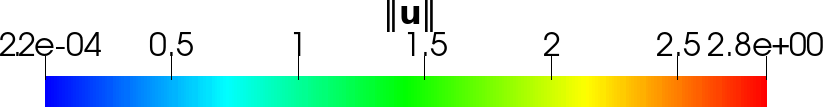}
    \caption{Solution for different iterations, respectively $(0, 1, 4)$,
    for the problem of Section \ref{subsubsec:examples:case3:linear}.
    On the top the condition ``if $\Omega_1$'' is represented with red
    indicating "true" and blue "false". On the bottom the norm of the
    velocity.}%
    \label{fig:ex3_linear}
\end{figure}
Again, the algorithm showed robustness when changing the initial configuration and
refining the grid.

Even for this complex configuration, the proposed algorithm is capable to compute a
reasonable solution with a limited cost.

\subsubsection{Non-linear case}\label{subsubsec:examples:case3:nonlinear}

We consider in this part the non-linear case, where the constitutive law combination is
given as
\begin{gather*}
    \bLambda(\bu) =
    \begin{dcases*}
        \bu & in $\Omega_1$,\\
        (0.01 + 0.25\norm{\bu}) \bu & in $\Omega_2$.
    \end{dcases*}
\end{gather*}
Algorithm \ref{algo:interface} takes 4 steps to reach the final configuration with an
average of $18$ iterations of the non-linear solver for each step.
Figure \ref{fig:ex3_non_linear} shows the obtained numerical solution for
different iterations.
\begin{figure}[btp]
    \centering
    \includegraphics[width=0.3\textwidth]{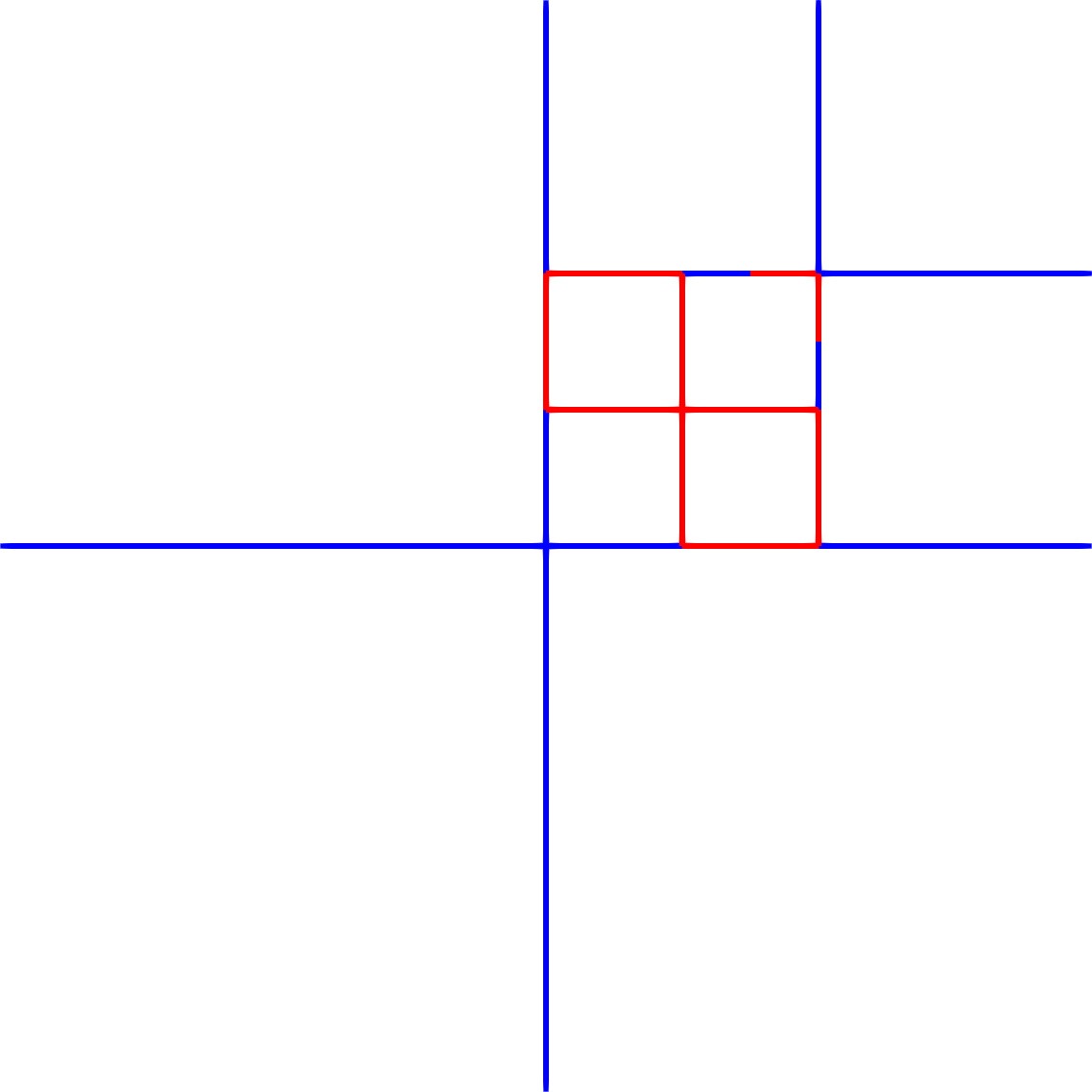}%
    \hspace*{0.05\textwidth}%
    \includegraphics[width=0.3\textwidth]{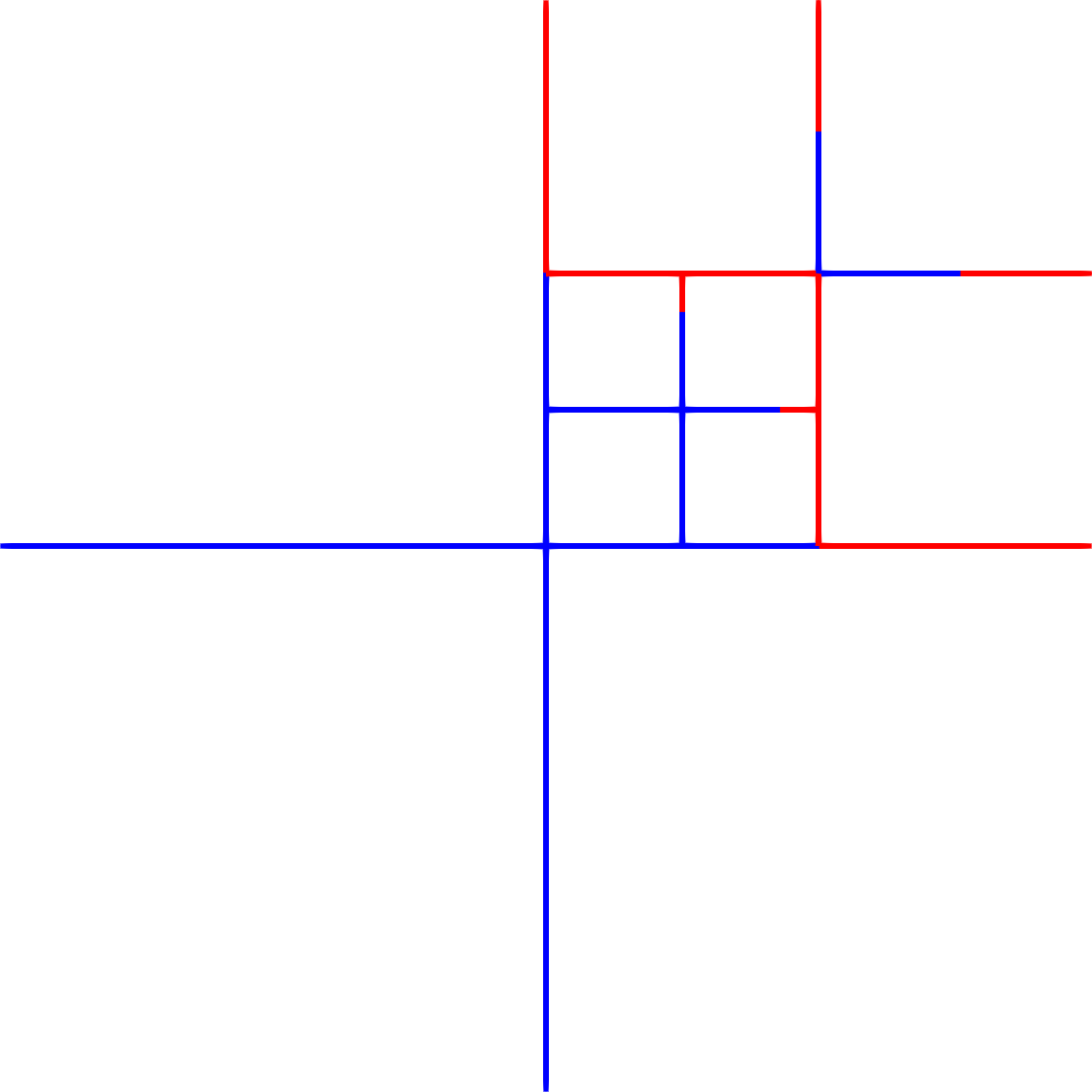}%
    \hspace*{0.05\textwidth}%
    \includegraphics[width=0.3\textwidth]{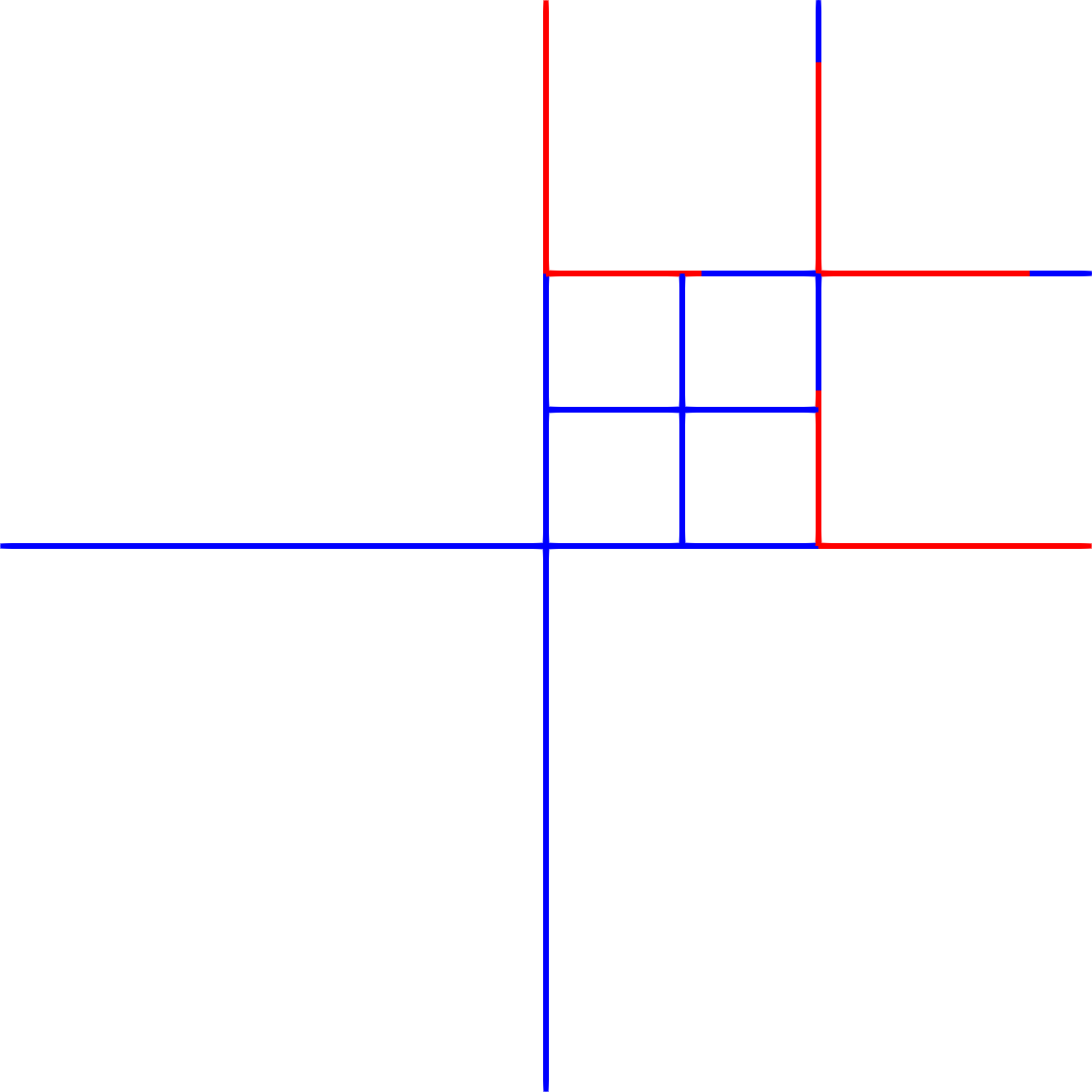}\\[0.05\textwidth]
    \includegraphics[width=0.3\textwidth]{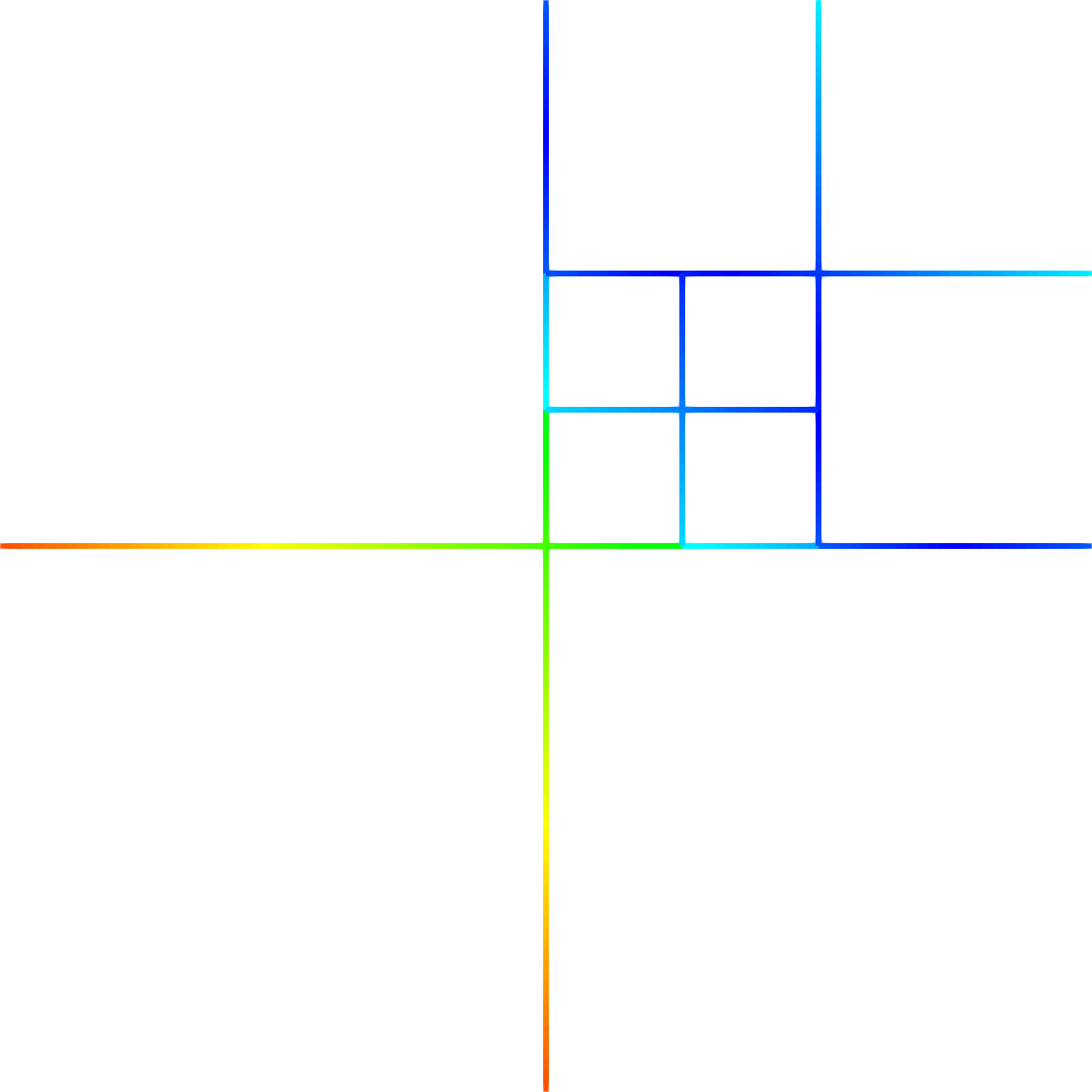}%
    \hspace*{0.05\textwidth}%
    \includegraphics[width=0.3\textwidth]{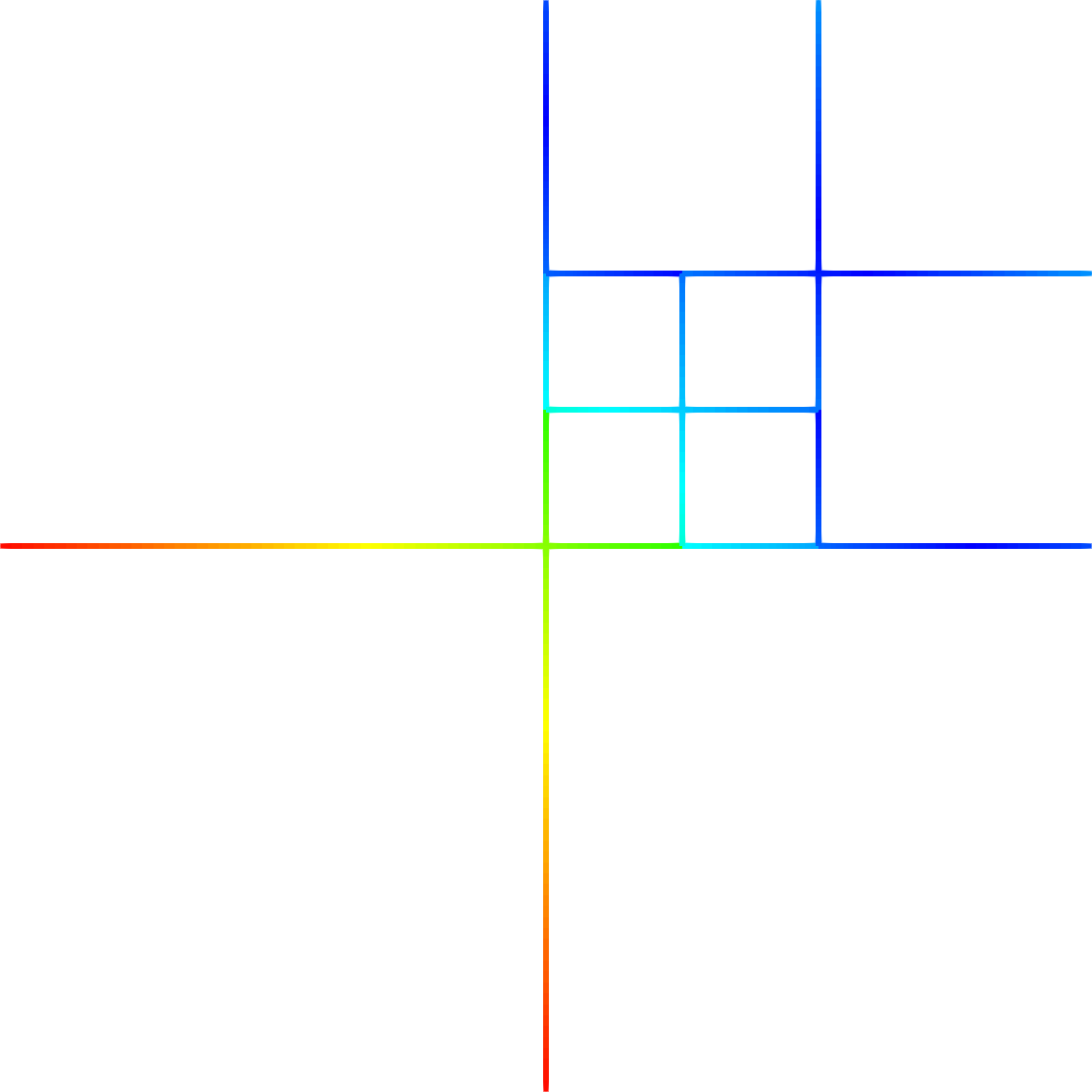}%
    \hspace*{0.05\textwidth}%
    \includegraphics[width=0.3\textwidth]{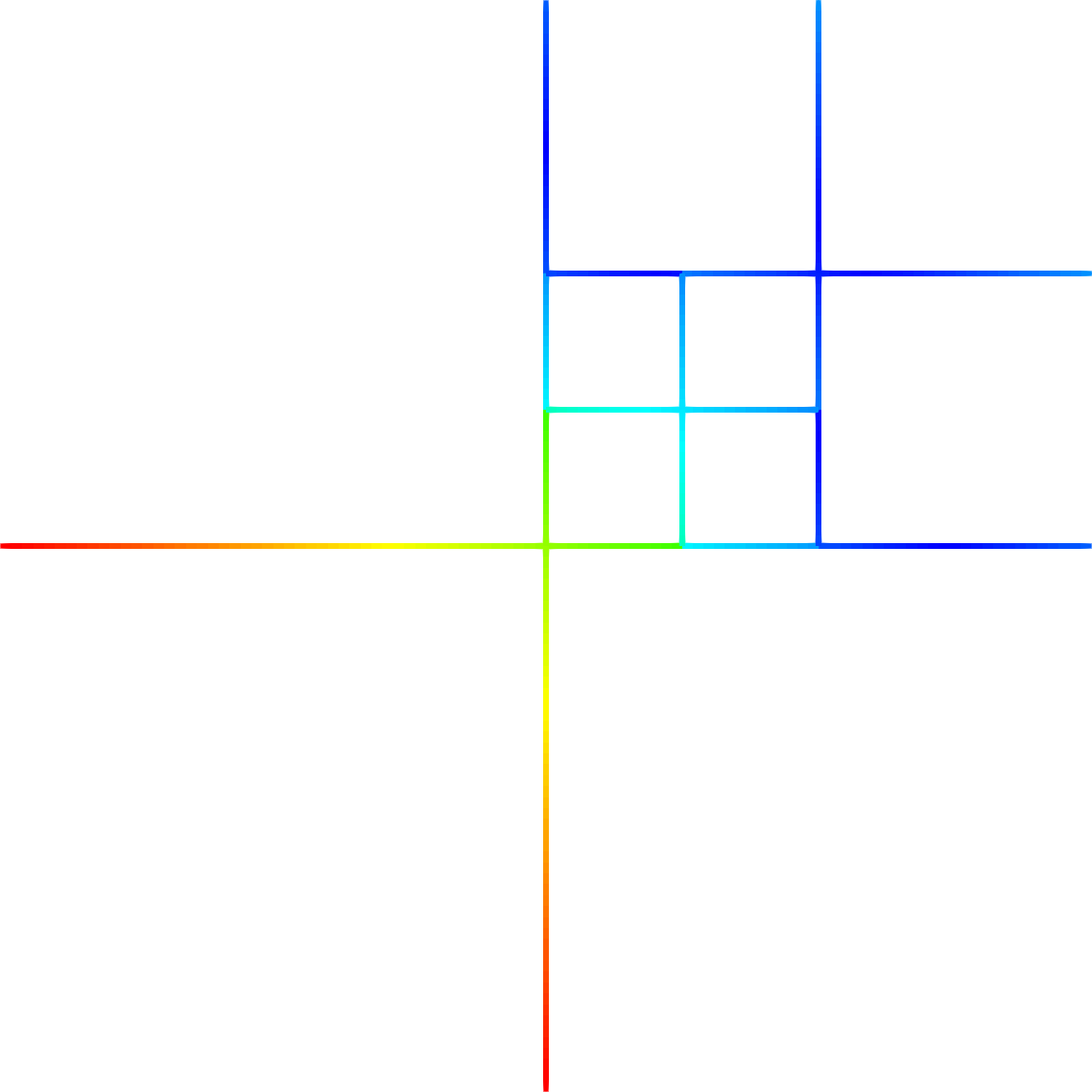}\\[0.025\textwidth]
    \includegraphics[width=0.3\textwidth]{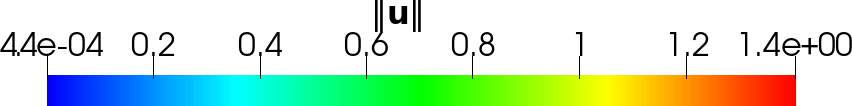}
    \caption{Solution for different iterations, respectively $(1, 3, 4)$,
    for the problem of Section \ref{subsubsec:examples:case3:nonlinear}.
    On the top the condition ``if $\Omega_1$'' is represented with red
    indicating "true" and blue "false". On the bottom the norm of the
    velocity.}
    \label{fig:ex3_non_linear}
\end{figure}
The obtained solution is again insightful, positioning the high-speed region,
given by $\Omega_{2, h}$, in the longest fracture branches and the low-speed
region $\Omega_{1, h}$ mainly in the fracture
branches at the outflow. There is a transition zone from $\Omega_{2, h}$ to $\Omega_{1,
h}$ that mainly takes place in $\Omega_s$. A zoom-in is reported in Figure \ref{fig:ex3_non_linear_zoom} to better clarify the evolution of $\Omega_{2, h}$ and $\Omega_{1,
h}$ at the small fracture branches.
\begin{figure}[btp]
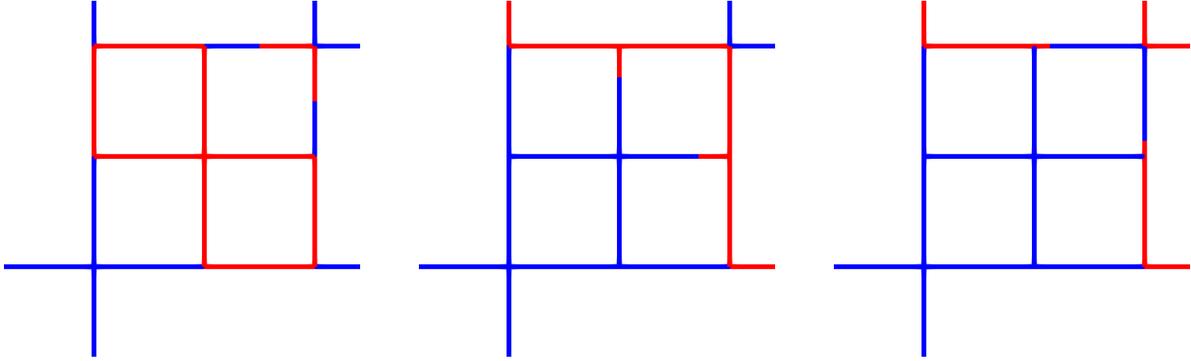

    \centering
    \centering
    \includegraphics[width=0.3\textwidth,trim={16cm 16cm 8cm 8cm},clip]{fig/ex3_non_linear_cond_1}%
    \hspace*{0.05\textwidth}%
    \includegraphics[width=0.3\textwidth,trim={16cm 16cm 8cm 8cm},clip]{fig/ex3_non_linear_cond_3}%
    \hspace*{0.05\textwidth}%
    \includegraphics[width=0.3\textwidth,trim={16cm 16cm 8cm 8cm},clip]{fig/ex3_non_linear_cond_4}
    \caption{Zoom around the short fracture branches $\Omega_s$ of the ``if
    $\Omega_1$'' condition for different iterations, respectively $(1, 3, 4)$,
    for the problem of Section \ref{subsubsec:examples:case3:nonlinear}.}
    \label{fig:ex3_non_linear_zoom}
\end{figure}

This final example shows a very interesting and physically sound final
configuration, which might have been hard to predict without the framework
introduced in this work. All these examples showed the applicability and
importance of the model adaptation and support the presented Algorithm \ref{algo:interface} to
be a valid approach for its solution.

\section{Conclusion}\label{sec:conclusion}

In this work we introduced a new model for discrete fracture networks that is
able to adapt the constitutive relation between velocity and pressure 
depending on the magnitude of the fluid velocity, which is part of the unknowns. 

We presented a mathematical formulation for it, and with an energy argument we were able
to show that under reasonable hypotheses on the constitutive law the problem
has a solution. When the interface inverse permeability jump is non-negative, the problem
is convex and we could show existence in any space dimension; when it is negative, however,
the problem becomes non-convex and we had to restrict our proof of existence to 
one space dimension. We also introduced a discrete algorithm that, for a given problem, tracks the low- and
high-speed regions as well at the interface separating them. We considered various constitutive 
relations for distinct parts of the network, such as the classical Darcy law and the non-linear 
Darcy--Forchheimer law. Several numerical examples showed the validity of 
the proposed approach by increasing the geometrical and physical complexity of the problem. 
We noticed that when the interface inverse permeability jump is positive, the algorithm
seems not to converge and oscillate indefinitely between two configurations. In the
complementary non-positive case, the algorithm seems to behave and converge nicely.
Let us summarize these results:
\begin{table}[!ht]
    \centering
    \begin{threeparttable}
    \begin{tabular}{ |c|c|c|c| }
        \cline{2-4}
        \multicolumn{1}{c|}{} & $\lambda_{21} > 0$ &
        $\lambda_{21} = 0$ &
        $\lambda_{21} < 0$
        \\ \hline
        existence of solutions &  yes & yes & yes if $d=1$\\ \hline
        convexity of energy &  yes  & yes & no\tnote{$_*$}\\ \hline
        convergence of algorithm ($d=1$) & no & yes & yes\\ \hline
    \end{tabular}
        \begin{tablenotes}\footnotesize
        \item[$_*$]\hspace{-1pt}unless in trivial case $d=1$ and $\Sigma_{\mt{v}}\neq\emptyset$
    \end{tablenotes}
\end{threeparttable}
\caption{Summary of results according to the interface inverse permeability jump $\lambda_{21} := \lambda_2-\lambda_1$.}
\end{table}

From a modeling point of view, as mentioned in the introduction, future extensions will
be the inclusion of the rock matrix and the possibility of having more than two 
constitutive laws for the problem.

From an analytical point of view, open questions include the existence of solutions when $d>1$
and the interface inverse permeability jump is negative, and the extension of the existence 
results to tensor permeabilities when $d>1$; see Remark \ref{rem:tensor-perm}. In addition, 
characterizing admissible constitutive laws on the interface (rather than leaving the choice as a free 
parameter of the model, as we did here) as well as determining the Hausdorff dimension of the interface are 
interesting questions that would allow us to study uniqueness of solutions.

From a numerical point of view, further extensions will be the the development of the tracking 
algorithm for $d>1$ and the proof of its convergence
when the interface inverse permeability jump is non-positive, and the development of an alternative
algorithm when the jump is positive.




\bibliographystyle{abbrv}
\bibliography{biblio}

\end{document}